\newtheorem{theorem}{Theorem}
\newtheorem{lemma}[theorem]{Lemma}
\newtheorem{corollary}[theorem]{Corollary}
\newtheorem{proposition}[theorem]{Proposition}
\newtheorem{conjecture}[theorem]{Conjecture}
\begin{document}

\title{On the Third Largest Prime Divisor of an Odd Perfect Number}

\author{}
\date{}

\maketitle
\vspace{-1 cm}

\begin{center}{\bf Sean Bibby}\\
sean.bibby@mail.mcgill.ca \\ 
{\bf Pieter Vyncke }\\
pieter.vyncke@hotmail.com \\ 
{\bf Joshua Zelinsky} \\
{\it Department of Mathematics, Iowa State University}\\  zelinsky@gmail.com
\end{center}

\begin{abstract} Let $N$ be an odd perfect number and let $a$ be its third largest prime divisor, $b$ be the second largest prime divisor, and $c$ be its largest prime divisor. We discuss steps towards obtaining a non-trivial upper bound on $a$, as well as the closely related problem of improving bounds for $bc$ and $abc$. In particular, we prove two results. First, we prove a new general bound on any prime divisor of an odd perfect number and obtain as a corollary of that  bound that $$a < 2N^{\frac{1}{6}}.$$ Second, we show that $$abc < (2N)^{\frac{3}{5}}.$$ We also show how in certain circumstances these bounds and related inequalities can be tightened.  

Define a $\sigma_{m,n}$ pair to be a pair of primes $p$ and $q$ where $q|\sigma(p^m)$ and $p|\sigma(q^n)$. Many of our results revolve around understanding $\sigma_{2,2}$ pairs. We also prove  results concerning $\sigma_{m,n}$ pairs for other values of $m$ and $n$. 

\end{abstract}

\section{Introduction}

Let $N$ be an odd perfect number. Assume that $N={p_1}^{a_1}{p_2}^{a_2}\cdots {p_k}^{a_k}$ where $p_1, p_2, \cdots, p_k$ are primes satisfying $p_1 < p_2 < p_3 <  \cdots < p_k$. Acquaah and Konyagin \cite{AK} proved that one must have \begin{equation}\label{AK inequality} p_k < (3N)^{1/3}.\end{equation} The third  author \cite{Zelinskysecondlargest} proved that \begin{equation}\label{b bound from Z1} p_{k-1} < (2N)^{1/5}. \end{equation} In this article we prove that $p_{k-2} < (2N)^{1/6}$ and discuss possible directions for further improvement. Iannucci \cite{Iannuccithird} proved a lower bound of $p_{k-2}> 100$. \par  In \cite{Zelinskysecondlargest}, the third  author also proved that \begin{equation}\label{bc inequality from previous paper}  p_kp_{k-1} < 6^{1/4}N^{1/2}. \end{equation}

Using closely related techniques, Luca and Pomerance    \cite{LucaPomerance} proved that $$p_1p_2p_3 \cdots p_k < 2N^{\frac{17}{26}}.$$ That result was subsequently improved by Klurman \cite{Klurman} who replaced the exponent of $\frac{17}{26}$ with $\frac{9}{14}$. Klurman's improvement of the exponent came at the cost of replacing the 2 in front with a non-explicit constant.  A long-term goal of many researchers has been to try to show that one in fact has \begin{equation}p_1p_2 \cdots p_k < N^{\frac{1}{2}} \label{radicalineq}. \end{equation} A large amount of computation has been expended on showing that an odd perfect number which violates Inequality (\ref{radicalineq}) must be very large and have very large prime factors (see \cite{Ellia}, \cite{OchemRaoRadical}).

Euler proved the following result which is often the starting point for any work on odd perfect numbers. 

\begin{lemma}\label{Euler form for OPN} If $N$ is an odd perfect number then we have $N= p^em^2$ for some prime $p$ where $(p,m)=1$ and $p \equiv e \equiv 1$ (mod 4).

\end{lemma} We will refer to the prime raised to an odd power in the factorization of $N$ as the ``special prime''. It follows immediately from Euler's result that one must have $p_{k-2} < N^{1/5}$. More generally, it follows immediately from Euler's theorem that for all $i$ with $1 \leq i \leq k $, $$p_{k-i} < (2N)^{\frac{1}{2i+1}}.$$  It is worth realizing how weak a result Euler's result is; Euler's result applies not just to odd perfect numbers, but  to any odd number $n$ where $\sigma(n) \equiv 2$ (mod 4). 

We will for the remainder of this paper, when convenient, use a slightly different notation for an odd perfect number which will allow us to avoid the frequent use of subscripts. In particular, we will also write $a=p_{k-2}$, $b=p_{k-1}$, and $c=p_k$. For a prime $p$ and integers $n$ and $s$, we will write $p^s||n$ to mean that $p^s|n$ and that $p^{s+1}\not|n$. When this is the case we will refer to $p^s$ as a component of $n$. 

We first note that we have the following upper bound on any prime factor. 

\begin{theorem}
\label{Generalbound} Let $N$ be an odd perfect number. We have for any integer $i$ with $0 \leq i \leq k-1$, 
$$p_{k-i} < (2N)^{\frac{1}{2i+2}}.$$
\end{theorem}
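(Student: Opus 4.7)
The plan is to apply Euler's structural theorem $N = p^e m^2$ (with $p \equiv e \equiv 1 \pmod 4$ the special prime, and every other prime appearing with even exponent in $m^2$), then split into cases according to the position of $p$ among the top $i+1$ primes $S_i = \{p_{k-i},\ldots,p_k\}$ and according to its exponent. Write $A = \prod_{l \in S_i} p_l^{a_l}$ for the ``top part'' of $N$; since $A \mid N$, any lower bound on $A$ is a lower bound on $N$.

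The easy cases are when either $p \notin S_i$, or $p = p_j \in S_i$ with $e \geq 3$. In the first case every prime in $S_i$ is non-special with even exponent $\geq 2$, so $A \geq p_{k-i}^{2(i+1)} = p_{k-i}^{2i+2}$. In the second case the sum of exponents across $S_i$ is at least $2(i) + 3 = 2i+3$, so $A \geq p_{k-i}^{2i+3}$. Either way, $N \geq A > p_{k-i}^{2i+2}/2$, giving the bound.

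The main case is $p = p_j \in S_i$ with $e = 1$, where the naive exponent count gives only $A \geq p_{k-i}^{2i+1}$, one factor short, and where the factor of $2$ in $(2N)^{1/(2i+2)}$ must be earned. The key observation is that $p_j \equiv 1 \pmod 4$ forces $q := (p_j + 1)/2$ to be an odd integer; since $p_j + 1 = \sigma(p_j) \mid \sigma(N) = 2N$ and $q$ is odd, we obtain $q \mid N$, and because $\gcd(q, p_j) = 1$ in fact $q \mid m^2$. The $i$ non-special primes in $S_i \setminus \{p_j\}$ each appear with exponent $\geq 2$ in $N$, so together they contribute at least $p_{k-i}^{2i}$ to $m^2$. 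When every prime factor of $q$ is strictly less than $p_{k-i}$, the factor $q$ is coprime to this top part, and the two divisibilities combine multiplicatively, so
$$N = p_j m^2 \geq p_j \cdot p_{k-i}^{2i} \cdot \frac{p_j+1}{2} > \frac{p_{k-i}^{2i+2}}{2},$$
which rearranges to $p_{k-i} < (2N)^{1/(2i+2)}$.

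The main obstacle is the sub-case where $q$ has a prime factor $p_l$ with $k-i \leq l < j$, which breaks the above multiplicative combination because $q$ and the top part of $m^2$ then share the prime $p_l$. In this sub-case $p_{k-i}$ is necessarily non-special with $a_{k-i} \geq 2$ even, and the divisibility $p_l \mid p_j + 1$ forces $p_j \geq 2 p_l - 1 \geq 2 p_{k-i} - 1$. One then uses $\sigma(p_{k-i}^{a_{k-i}}) \mid N$ — which is odd, coprime to $p_{k-i}$, and strictly exceeds $p_{k-i}^{a_{k-i}}$ — to supply the missing factor. The delicacy will be bookkeeping any overlap between $\sigma(p_{k-i}^{a_{k-i}})$ and the other top primes of $N$; these overlaps are strongly constrained by the size bound $\sigma(p_{k-i}^{a_{k-i}}) < p_{k-i}^{a_{k-i}+1}/(p_{k-i}-1)$, which restricts which primes of $N$ can divide $\sigma(p_{k-i}^{a_{k-i}})$, and is where the technical heart of the proof will lie.
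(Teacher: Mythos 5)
Your easy cases, and the sub-case where every prime factor of $q=(p_j+1)/2$ lies strictly below $p_{k-i}$, are correct. But the argument is not complete: the remaining sub-case, where $q$ shares a prime $p_l$ with $k-i\le l<j$, is exactly the hard part, and you only describe a plan for it. The object you propose to use there, $T=\sigma(p_{k-i}^{a_{k-i}})$, does divide $N$, is coprime to $p_{k-i}$, and exceeds $p_{k-i}^{a_{k-i}}$; the problem is that nothing prevents $T$ from being supported almost entirely on the primes $p_{k-i+1},\dots,p_k$ (and on the prime factors of $q$), e.g.\ $\sigma(p_{k-i}^{2})=3p_{k-1}v$ with $v$ small. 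In that event $T$ contributes essentially nothing beyond what is already counted in $A$, and to rescue the bound you must run a secondary argument trading the lost factor against the forced largeness of the top primes dividing $T$. That is not routine bookkeeping, it is the theorem, and it is left unproved. (You also never treat $i=k$, where $A=N$ is not a proper divisor and the claim instead follows from the fact that an odd perfect number is divisible by a fourth power of a prime; this is minor but should be said.)

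The paper's proof sidesteps the entire case analysis on the special prime's location and on $\sigma(p_{k-i}^{a_{k-i}})$ by manufacturing the missing factor \emph{below} $p_{k-i}$, where no overlap is possible. Set $M=\prod_{j\ge k-i}p_j^{a_j}$. For $0<i<k$ this is a proper divisor of $N$, hence deficient, so $M<\sigma(M)<2M$ and therefore $M\nmid\sigma(M)$; some component $p_j^{a_j}$ with $j\ge k-i$ fails to divide $\sigma(M)$, and since $p_j^{a_j}\mid 2N=\prod_m\sigma(p_m^{a_m})$, perfection forces $p_j\mid\sigma(p_\ell^{a_\ell})$ for some $\ell<k-i$. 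Then $p_j\le\sigma(p_\ell^{a_\ell})<2p_\ell^{a_\ell}$ gives $p_\ell^{a_\ell}>p_j/2\ge p_{k-i}/2$, and this component is coprime to $M$ by construction, so
$$\tfrac12\,p_{k-i}\cdot p_{k-i}^{2i+1}<p_\ell^{a_\ell}M\le N,$$
using Euler only to see $M\ge p_{k-i}^{2i+1}$. I recommend you either carry out your deferred overlap analysis in full (I expect it to be substantially longer than the theorem deserves) or adopt this deficiency argument.
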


\begin{proof} We note that for $i=0$, this result is a corollary of Acquaah and Konyagin's bound. For $i=k-1$, the result follows from the well known fact that an odd perfect number must be divisible by a fourth power of a prime. Suppose that $1 \leq i \leq k-2$. Consider $$M=\prod_{{k-i} \leq j \leq k} p_j^{a_j}.$$ Note that $M$ must be deficient since it is a proper divisor of a perfect number. Thus, one must have $M<\sigma(M) <2M$. Thus, there exists $j$ such that  $j \geq k-i$ and satisfying $p_j^{a_j}\not|\sigma(M)$. Since $N$ is perfect, but any proper divisor is deficient, there is some $\ell < k-i$ such that  $p_j|\sigma(p_\ell^{a_\ell})$. Hence, $p_\ell^{a_\ell} > \frac{1}{2} p_{k-i}$. We then have
$$ \left(\frac{1}{2}p_{k-i}\right)p_{k-i}^{a_{k-i}}p_{k-i+1}^{a_{k-i+1}} \cdots p_k^{a_k} <  p_\ell^{a_\ell}M \leq  N.$$ Lemma \ref{Euler form for OPN} implies that  at most one of our exponents $a_m$ can be 1, and thus we have 
$$(1/2)p_{k-i}^{2i+2}  <   \left(\frac{1}{2}p_{k-i}\right)p_{k-i}^{a_{k-i}}p_{k-i+1}^{a_{k-i+1}} \cdots p_k^{a_k}.$$ From the above inequalities we then have that $$(1/2)p_{k-i}^{2i+2} < N,$$ and hence
$p_{k-i} < (2N)^{\frac{1}{2i+2}}.$

\end{proof}

We will make frequent use of the  argument used here where $N$ being perfect will force the existence of an additional component to supply a prime to $\sigma(N)$. We will refer to this as an $m$-type argument. 

We then obtain an immediate consequence of Theorem \ref{Generalbound}.

\begin{corollary} We have $a < 2N^{\frac{1}{6}}.$
\end{corollary}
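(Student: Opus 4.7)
The plan is essentially a one-line deduction from Theorem \ref{Generalbound}, since the theorem has already done all of the real work. Recall that under the authors' notational convention we have $a = p_{k-2}$, so the corollary is a statement about $p_{k-i}$ with $i=2$.

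My first step is therefore to specialize Theorem \ref{Generalbound} to $i=2$. This is allowed provided $0 \leq 2 \leq k$, which holds because an odd perfect number is known to have at least 9 (and indeed many more) distinct prime divisors; in particular $k \geq 2$, so $i=2$ is in the permitted range. The theorem then yields
$$a = p_{k-2} < (2N)^{\frac{1}{6}}.$$

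Next, I would weaken this inequality slightly to arrive at the stated form. Since $2^{1/6} < 2$, we have
$$(2N)^{\frac{1}{6}} = 2^{\frac{1}{6}} N^{\frac{1}{6}} < 2 N^{\frac{1}{6}},$$
and the corollary follows by combining this with the previous display. There is no obstacle here at all; the interesting content is entirely in Theorem \ref{Generalbound}, and the corollary is stated in the slightly looser form $2N^{1/6}$ presumably for aesthetic parallelism with the bounds \eqref{AK inequality} and \eqref{b bound from Z1} quoted in the introduction, so that the exponent $1/6$ stands out cleanly relative to the exponents $1/3$ and $1/5$ already on record for $c$ and $b$.
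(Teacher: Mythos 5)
Your proposal is correct and matches the paper's intent exactly: the corollary is stated as an immediate consequence of Theorem \ref{Generalbound} with $i=2$, giving $a = p_{k-2} < (2N)^{1/6} = 2^{1/6}N^{1/6} < 2N^{1/6}$. Nothing further is needed.
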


Many of the prior results on upper bounding the larger prime factors of an odd perfect number can be thought of as statements that involve restrictions a $\sigma_{m,n}$ pair can look like. By a $\sigma_{m,n}$ pair we mean a pair of primes $p$ and $q$ where $q|\sigma(p^m)$, and $p|\sigma(q^n)$.  \\

Consider the following Lemma from \cite{Zelinskysecondlargest}.

\begin{lemma}\label{p|q+1 and q|p^2+p+1 lemma} If $p$ and $q$ are positive odd integers such that $q|p^2+p+1$ and $p|q+1$, then we must have $(p,q) = (1,1)$ or $(p,q)=(1,3)$. 
\end{lemma}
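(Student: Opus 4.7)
The plan is to rule out all pairs with $p \geq 3$ via a parity argument combined with a size estimate, leaving only $p=1$ which I can handle directly. If $p=1$, the condition $p \mid q+1$ is automatic, and $q \mid p^2+p+1 = 3$ together with $q$ being a positive odd integer forces $q \in \{1,3\}$, producing exactly the two claimed pairs.

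For the main case $p \geq 3$, I would write $q+1 = pm$ and $p^2+p+1 = qs$ for positive integers $m$ and $s$. The key observation is a parity one: since $q$ is odd, $q+1$ is even; but $p$ is odd, so $m$ must be even. Writing $m = 2t$ with $t \geq 1$ gives $q = 2pt-1$. Reducing $qs = p^2+p+1$ modulo $p$ yields $-s \equiv 1 \pmod p$, so $s = up-1$ for some $u \geq 1$.

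Substituting both parameterizations into $(2pt-1)(up-1) = p^2+p+1$ and simplifying (a factor of $p$ drops out cleanly on both sides) should reduce the equation to
$$u(2pt-1) = p + 2t + 1.$$
Since $u \geq 1$, this forces $2pt - 1 \leq p + 2t + 1$, equivalently $2t(p-1) \leq p+2$. For $p \geq 5$ this fails already at $t=1$ (since $2(p-1) \geq p+3$), and for $p=3$ the only admissible value $t=1$ yields $5u = 6$, so $u = 6/5 \notin \mathbb{Z}$. Either way there is no valid solution, ruling out every $p \geq 3$.

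The only real obstacle is spotting the parity step that forces $m$ to be even; without it one is left with a two-parameter diophantine equation, whereas after it the problem collapses to a finite check. Beyond that, the work is just the routine algebra that reduces $(2pt-1)(up-1)=p^2+p+1$ to the displayed form and the verification that the resulting linear inequality in $t$ admits no admissible integer solutions for $p \geq 3$.
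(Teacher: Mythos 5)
Your argument is correct. The case $p=1$ is handled exactly as it should be, and in the main case the parametrization $q=2pt-1$ (forced by the parity observation that $p$ odd and $q+1$ even make $m$ even) together with $s=up-1$ (from reducing $qs=p^2+p+1$ modulo $p$) does reduce, after dividing out the factor of $p$, to $u(2pt-1)=p+2t+1$; the inequality $2t(p-1)\le p+2$ then kills every $p\ge 5$, and $p=3$, $t=1$ gives $5u=6$, which has no integer solution. Note that the paper itself does not prove this lemma --- it imports it verbatim from the cited earlier paper on the second largest prime factor --- so there is no in-paper argument to compare against; your proof is a clean, self-contained elementary derivation of the same statement, entirely in the spirit of the divisibility-plus-size arguments (such as the proof of Lemma \ref{p|q+1 and q|p+1 lemma}) that the paper does carry out explicitly.
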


Lemma \ref{p|q+1 and q|p^2+p+1 lemma}  leads to the result that there are no $\sigma_{1,2}$ pairs. 
Note that a $\sigma_{m,n}$ pair has a graph-theoretic interpretation: Given an odd perfect number $p_1^{a_1}p_2^{a_2} \cdots p_k^{a_k}$, we can construct a directed graph where, for every $i$ with $1 \leq i \leq k$, each vertex is labeled with $p_i$. For vertices with labels $p_i$ and $p_j$, there is an arrow from a vertex $p_i$ to a vertex $p_j$ if $p_i|\sigma(p_j^{a_j}).$ We can give a weight  $m$ to each directed edge, where $$p_i^m||\sigma(p_j^{a_j}).$$  A $\sigma_{m,n}$ pair corresponds to a 2-cycle in this graph.  Note that other results about odd perfect numbers can be thought of as statements about this graph; for example, see Theorem 2 of \cite{Dandapat}.

One of the primary obstructions to proving strong results  is the possibility of the presence $\sigma_{2,2}$ pairs. That is, primes $p$ and $q$ where $p|q^2+q+1$ and $q|p^2+p+1$.  Examples are $(3,13)$ and $(13,61)$. If these were the only $\sigma_{2,2}$ pairs, much of what we do here would be simplified. Unfortunately, there's at least one very large solution: $$(p,q)= (22419767768701,107419560853453).$$ 

We will define a {\emph{quasisolution}} to be a pair of positive integers $p$ and $q$ where $p|q^2+q+1$ and $q|p^2+p+1$. Notice that we do not require the $p$ and $q$ in a quasisolution to be prime. One major step in understanding $\sigma_{2,2}$ pairs is to completely classify quasisolutions.

\begin{lemma}\label{quasisolutions only come from quasisolution equation}   Let $p$ and $q$ be positive integers. Then $p,q$ form  quasisolution if, and only if, they satisfy \begin{equation}\label{quasisolution equation}5pq=p^2+q^2+p+q+1.\end{equation} Every quasisolution is given by a consecutive pair of terms in the sequence given by $t_1=t_2=1$ and with $$t_{n+2}=\frac{t_{n+1}^2 + t_{n+1} +1}{t_n}. $$
Finally, we have 
\begin{equation}\label{t_n approximate inequality} 4t_n< t_{n+1} < 5t_n.
\end{equation} for all $n>3$.\footnote{ Versions of  Lemma \ref{quasisolutions only come from quasisolution equation} have been proven in other locations also. See, for example \cite{Mills}, which proves a more general result. Interest in $\sigma_{2,2}$ pairs has also arisen in at least one other completely different context. See \cite{CaiShenJia}.}
\end{lemma}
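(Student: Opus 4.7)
The plan is to establish the three assertions in sequence: the algebraic characterization, the parameterization by the given recurrence, and finally the growth inequality.

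For the characterization, the ``if'' direction is immediate: equation \eqref{quasisolution equation} rewrites as $q^2 + q + 1 = p(5q - p - 1)$ and symmetrically $p^2 + p + 1 = q(5p - q - 1)$, so $p \mid q^2 + q + 1$ and $q \mid p^2 + p + 1$. For the converse, I would first observe that $\gcd(p,q) = 1$, since any common divisor of $p$ and $q$ also divides $q^2 + q + 1$ and therefore divides $1$. Reducing $p^2 + q^2 + p + q + 1$ modulo $p$ gives $q^2 + q + 1 \equiv 0$, and symmetrically modulo $q$, so $pq \mid p^2 + q^2 + p + q + 1$. Write $p^2 + q^2 + p + q + 1 = kpq$ for a positive integer $k$; the remaining task is to pin down $k = 5$. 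This I would do by Vieta jumping: fixing $q$, the quadratic $x^2 - (kq - 1)x + (q^2 + q + 1) = 0$ has $p$ as a root, so $p^* = kq - 1 - p = (q^2 + q + 1)/p$ is a second positive-integer root, yielding a new solution $(p^*, q)$ with the same $k$. Descending by replacing the larger coordinate at each step (the inequality $(q^2+q+1)/p \leq (q^2+q+1)/(q+1) < q + 1$ shows the maximum strictly decreases when $p > q$), one reaches a terminal pair with $p = q$; substituting $p = q$ into the equation gives $(k-2)p^2 = 2p + 1$, forcing $p \mid 1$, hence $p = q = 1$ and $k = 5$. Since $k$ is invariant under the jump, $k = 5$ for every quasisolution.

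For the sequence representation, I would run the same Vieta relation forward: given a pair $(t_{n-1}, t_n)$ satisfying \eqref{quasisolution equation}, the partner root $t_{n+1} = 5 t_n - 1 - t_{n-1} = (t_n^2 + t_n + 1)/t_{n-1}$ produces a new quasisolution $(t_n, t_{n+1})$ matching the stated recurrence. Starting from $(t_1, t_2) = (1,1)$ generates all pairs obtained by forward jumps, and the descent argument above shows every quasisolution reduces to $(1,1)$; hence quasisolutions are exactly consecutive pairs in this sequence. For \eqref{t_n approximate inequality}, the identity $t_{n+1} = 5t_n - 1 - t_{n-1}$ gives $t_{n+1} < 5t_n$ at once, while $t_{n+1} > 4t_n$ is equivalent to $t_n \geq t_{n-1} + 2$; this follows by induction with base case $n = 4$ (where $t_4 = 13$, $t_5 = 61$) and inductive step $t_n - t_{n-1} > 3 t_{n-1} \geq 2$ using the hypothesis $t_n > 4 t_{n-1}$.

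The step requiring the most care is the Vieta descent --- specifically confirming that $p^*$ is a positive integer strictly smaller than $\max(p,q)$ so the process must terminate, and that the terminal case $p = q$ really forces $(p,q,k) = (1,1,5)$ with no other small configuration slipping through. Everything else is a short algebraic check or a one-line induction.
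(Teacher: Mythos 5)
Your proof is correct and follows essentially the same route as the paper's: your Vieta jump $p \mapsto (q^2+q+1)/p$ with the invariant $k = (p^2+q^2+p+q+1)/(pq)$ is exactly the paper's quasichain extension with its invariant $m(p,q)$, and both arguments descend to the base pair to force $k=5$. The only cosmetic differences are that you verify integrality of $k$ up front and terminate the descent at $p=q$, whereas the paper uses oddness of the chain terms to descend to a pair $(1,x)$ with $x \mid 3$.
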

\begin{proof}  It is immediate that if $p$ and $q$ satisfy Equation \ref{quasisolution equation}, then  $p|q^2+q+1$ and $q|p^2+p+1$ and hence they are a quasisolution. 
If $(p,q)$ is a quasisolution with $p < q$, and $d= \frac{q^2+q+1}{p}$, then a little algebra shows that $(q,d)$ is a quasisolution  with $q < d$.  Thus, given a quasisolution, we can repeatedly apply this process to get a chain of quasisolutions which we will call a quasichain. For any such quasichain, we have $ \frac{p^2+q^2+p+q+1}{pq} = \frac{q^2+d^2+q+d+1}{qd}.$ Thus, for any quasisolution, we may look at the quantity $$m(p,q) = \frac{p^2+q^2+p+q+1}{pq}$$ which is an invariant for the entire quasichain. So, if we can prove that every quasisolution arises from the quasichain which starts off with $p=1,q=1$ then we are done.

Let $x_n$ be a chain of quasisolutions. Note that  by rearranging our definition of how to extend a quasichain we have that \begin{equation}\label{descending quasisolution equation} x_{n}= \frac{x_{n+1}^2+x_{n+1}+1}{x_{n+2}}.\end{equation} Note also that every member of a quasichain must be odd (because for any integer $t$, $t^2+t+1$ is odd). If $x_{n+1}$ and $x_{n+2}$ are both greater than 1, then it is easy to check that $x_n$ is positive and satisfies $x_n < x_{n+1}+2$. Since  both $x_n$ and $x_n+1$ are odd, one must have $x_n \leq x_{n+1}$, and it is easy to see that equality can occur only when $x_n=x_{n+1}=1$. Thus, by the well-ordering principle for any chain we can keep taking smaller and smaller elements until we reach a lowest term. This term must be of the form $(1,x)$ for some $x$. Such a term must satisfy $x|1^2+1+1=3$. So the only possible options for $x$ are $x=1$ and $x=3$. Since these are the first two terms of the chain which starts with $(1,1)$, we have proven the first part of the Lemma. 

Once we have that all quasisolutions arise this way, Inequality (\ref{t_n approximate inequality}) arises from a straightforward induction argument.

\end{proof}

For the remainder we will write $t_n$ to denote the sequence formed by the chain of quasisolutions. That is, $t_1=1$, $t_2 =1$, and in general $$t_{n+2} = \frac{t_{n+1}^2 +t_{n+1} +1}{t_n}.$$ 

We will use this characterization of quasisolutions to substantially restrict what $\sigma_{2,2}$ pairs can look like. Before we do so, we note that the characterization of quasisolutions allows one to easily search for $\sigma_{2,2}$ pairs. A computer search shows that after the large pair mentioned above, there are no $\sigma_{2,2}$ pairs below $10^{4000}$. \\

Let $w$ be a positive integer where $w$ has no prime divisors which are $1$ (mod 3). One can easily see that the sequence $x_n$ (mod $w$) is periodic. Moreover, $x_n$ mod $w$ will always have a symmetry to it: We will not need this general symmetry, but it is worth noting and is well illustrated by $w=11$. We have (mod 11) the sequence $$1,\:1,\:3,\:2,\:6,\:5,\:7,\:7,\:5,\:6,2,\:3,\:1,\:1 \cdots.$$ Notice that after we reach the pair of 7s, the sequence repeats itself in reverse order until reaching $1,1$, where the pattern will then restart. This is due to the symmetry in the definition of our recursion. In particular, that $$t_nt_{n+2} = t_{n+1}^2 + t_{n+1} +1.$$ We also note that we have the following other behavior:  $t_n \equiv 1$ (mod 4) except if $n \equiv 0$ (mod 3). Similarly, $t_n \equiv 1$ (mod 3) except when $n \equiv 0$ (mod 3), in which case $t_n \equiv 0$ (mod 3). Thus, we immediately have that any $\sigma_{2,2}$ pair must have $p \equiv q \equiv 1$ (mod 4). 

We note that mod $5$, the sequence has period 4 with $t_n \equiv 1$ when $n \equiv 1$ or $2$ (mod 4), and $t_n \equiv 3$ when $n \equiv 3$ or $0$ (mod 4).

% Finally, we note that $t_n$ mod 8 has period $6$ with the pattern $$1,1,3,5,5,3,1,1 \cdots.$$

\begin{lemma}\label{squared divisor in sigma 2,2 pair} There are no primes $p$ and $q$ with $p^2|q^2+q+1$ and $q|p^2+p+1$.
\end{lemma}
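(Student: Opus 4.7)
The plan is to combine the hypotheses with the classification of quasisolutions from Lemma \ref{quasisolutions only come from quasisolution equation}. Since $p^2\mid q^2+q+1$ in particular implies $p\mid q^2+q+1$, the pair $(p,q)$ is a quasisolution, so it satisfies $5pq=p^2+q^2+p+q+1$. Rearranging this as
\[
q^2+q+1 = p(5q-p-1),
\]
I immediately see that the strong divisibility condition $p^2\mid q^2+q+1$ is equivalent to the one-variable congruence $p\mid 5q-1$, i.e.\ $5q\equiv 1\pmod p$. (Note this also forces $p\neq 5$, since $5\nmid 5q-1$.)

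Next I would combine $5q\equiv 1\pmod p$ with the congruence $q^2+q+1\equiv 0\pmod p$ that comes from $p\mid q^2+q+1$. Multiplying the latter by $25$ and using $(5q)^2\equiv 1$ and $5\cdot(5q)\equiv 5$, one obtains
\[
0 \equiv 25(q^2+q+1) \equiv (5q)^2 + 5\cdot(5q) + 25 \equiv 1+5+25 = 31 \pmod{p}.
\]
Since $p$ is prime this forces $p=31$.

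Finally I would dispose of $p=31$. The cleanest way is to substitute into the quasisolution equation, reducing it to the quadratic $q^2-154q+993=0$, and observe that the discriminant $154^2-4\cdot 993=19744$ lies strictly between $140^2$ and $141^2$, hence is not a perfect square, so no integer $q$ exists. Alternatively, by the uniqueness part of Lemma \ref{quasisolutions only come from quasisolution equation}, the prime $p$ must appear in the sequence $t_n=1,1,3,13,61,291,\dots$, which is strictly increasing after the initial repeated $1$ and jumps from $t_4=13$ over $31$ to $t_5=61$.

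I do not anticipate a real obstacle here; the whole argument is a few lines once one notices the trick of using the quasisolution equation to convert the quadratic divisibility $p^2\mid q^2+q+1$ into the linear congruence $5q\equiv 1\pmod p$. The only mildly subtle point is keeping track of the case $p=5$, which is ruled out automatically, and the final numerical check that $31$ does not actually complete a quasisolution.
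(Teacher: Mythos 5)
Your proof is correct, but it takes a genuinely different route from the paper's. Both arguments start from the quasisolution equation $5pq=p^2+q^2+p+q+1$ of Lemma \ref{quasisolutions only come from quasisolution equation}, but they diverge immediately. The paper writes $kp^2=q^2+q+1$, shows $k\equiv 3 \pmod 6$, $9\nmid k$, $5\nmid k$, and then disposes of $k=3$ and $k=21$ by hand (each yields a linear relation $5q=4p+1$, resp.\ $5q=22p+1$, and a gcd contradiction), finally killing $k\geq 33$ by appealing to the growth bound $q<5p$ of Inequality \ref{t_n approximate inequality}. You instead rewrite the quasisolution equation as $q^2+q+1=p(5q-p-1)$, so that $p^2\mid q^2+q+1$ becomes the linear congruence $5q\equiv 1\pmod p$, and then the substitution $25(q^2+q+1)\equiv (5q)^2+5(5q)+25\equiv 31\pmod p$ pins down $p=31$ in two lines; the discriminant check on $q^2-154q+993=0$ finishes it. Your version is shorter, avoids the mod-$9$ and mod-$5$ bookkeeping, and (via the discriminant finish) does not need the growth inequality at all --- it uses only the ``only if'' direction of Lemma \ref{quasisolutions only come from quasisolution equation}. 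It also proves slightly more: \emph{any} quasisolution $(p,q)$ of positive integers with $p^2\mid q^2+q+1$ must have $p\mid 31$, hence $p=1$. The one point to state explicitly is that $p\neq q$ (else $p\mid 1$) and hence $\gcd(p,q)=1$, which is what licenses applying the quasisolution classification; this is harmless but worth a sentence.
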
 
\begin{proof}
Assume we have such a pair. Note that $p$ and $q$ must be both $1$ (mod 3) (since any divisor of $n^2+n+1$ is 1 or 0 mod 3). So we have $3p^2|q^2+q+1$, and $3q|p^2+p+1$. We will choose $k$ such that $kp^2 = q^2+q+1$. First consider the possibility that $k=3$, that is, $3p^2= q^2 + q+1$. 

%We have $p= x_n$ and $q=x_{n+1}$ for some $n$. We must have by looking at acceptable moduli mod 5, that $n \equiv 1$ or $2$ (mod 4). 

Then  $$5pq =q^2 + q + 1  + p^2 +p = 3p^2 + p^2 + p = 4p^2 + p.$$
Thus, we have $$5q = 4p+1.$$ Since $q|4p+1$ and $q|p^2+p+1$, we have that $$q|4(p^2+p+1) - p(4p+1)=3p+4. $$ Since $q|3p+4$ and $q|4p+1$, we must have $$q|4p+1 - (3p+4) = p -3$$ which is impossible since $q > p$. 

Thus, we may assume that $kp^2 = q^2 + q +1$ for some $k>3$. Note that $k \equiv 3$ (mod 6). Note also that $k \not \equiv 0$ (mod 9) since $n^2 + n +1 \equiv 0$ (mod 9) has no solutions. Also, $k$ cannot be divisible by $5$, since $5\equiv 2$ (mod 3). Thus, we have that $k \geq 21$. 

Let us assume that $k=21$. We then have that $21p^2 = q^2 + q +1$, and using similar logic as before, we have that $$5pq = 21p^2 + p^2 + p= 22p^2 +p.$$
We thus have $$5q = 22p+ 1.$$ We then obtain a contradiction very similarly to how we obtain a contradiction  for $k=3$. Since $q|22p+1$ and $q|p^2+p+1$, we have that $$q|(22p^2 + 22p + 22) - p(22p+1) = 21p + 22.$$ Thus, $q|(22p+1) - (21p + 22) = -21$, and we can check that neither $q=3$ nor $q=7$ works. 

Thus, we have that $k \neq 21$. The next acceptable value for $k$ is $k = 33$ (we cannot have $k=27$ since $9|27$). So, $k \geq 33$. We then have that $$33p^2 \leq q^2 + q +1 $$ which implies that $q > 5p$ and hence contradicts Inequality (\ref{t_n approximate inequality}). 
\end{proof}

Lemma \ref{squared divisor in sigma 2,2 pair} has a graph theoretic interpretation in that the graph of an odd perfect number cannot have a pair of vertices $x$ and $y$, each with out-degree 2, with vertex $x$ pointing to vertex $y$ and with $y$ pointing only to vertex $x$. 

Lemma \ref{squared divisor in sigma 2,2 pair} also naturally leads to the next Lemma.

\begin{lemma}\label{PQR lemma} There are no primes $p,q,r$ with $pr|q^2+q+1$, $q|p^2+p+1$, $p|r+1$, and $r \equiv 1$ (mod 4). 
\end{lemma}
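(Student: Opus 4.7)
The hypotheses $p \mid q^2+q+1$ and $q \mid p^2+p+1$ say that $(p,q)$ is a $\sigma_{2,2}$ pair of primes. My first step is to argue $p<q$: otherwise the quasichain structure of Lemma \ref{quasisolutions only come from quasisolution equation} forces $(q^2+q+1)/p$ to equal the previous term of the chain, which is strictly less than $q < p-1$, contradicting the fact that it must be divisible by a prime $r$ satisfying $r \geq p-1$ (from $p \mid r+1$). With $p<q$ in hand, the quasisolution equation gives $q^2+q+1 = p(5q-p-1)$, and since $pr \mid q^2+q+1$ with $\gcd(p,r)=1$, I get $r \mid 5q-p-1$. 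Substituting $5q \equiv p+1 \pmod r$ into $25(q^2+q+1) \equiv 0 \pmod r$ (valid because $5 \nmid q^2+q+1$ for any integer $q$, so $r \ne 5$) then yields
\[
r \mid (p+1)^2 + 5(p+1) + 25 \;=\; p^2 + 7p + 31.
\]

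Next I would convert $p \mid r+1$ into $r = kp-1$ for some positive integer $k$. Combining $r \equiv 1 \pmod 4$ with the earlier observation that $p \equiv 1 \pmod 4$ for any $\sigma_{2,2}$ pair of primes---the apparent exception $(p,q)=(3,13)$ is ruled out directly, since there $(q^2+q+1)/p = 61$, forcing $r=61$, but $3 \nmid 62$---this forces $k \equiv 2 \pmod 4$. Writing $p^2+7p+31 = (kp-1)t$ and reducing modulo $p$ gives $t \equiv -31 \pmod p$, so $t = jp-31$ for some positive integer $j$. Substituting back and simplifying yields the clean identity
\[
(jp-31)(jk-1) \;=\; j^2 + 7j + 31.
\]

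The remaining step, a bounded case check, is where the real work lies. A mod-$4$ calculation on the equivalent form $(jk-1)p = j+31k+7$ (using $p \equiv 1$ and $k \equiv 2 \pmod 4$) forces $j \equiv 2 \pmod 4$, while $p \geq 3$ gives the bound $j \leq (31k+10)/(3k-1) \leq 14$ for every $k \geq 2$. Hence $j \in \{2,6,10,14\}$, for which $j^2+7j+31$ takes the values $49$, $109$, $201$, $325$ respectively. For each of these four small integers I would enumerate its positive factorizations as $(jp-31)(jk-1)$---at most six candidate pairs per case---and verify that none yields a prime $p$ together with a value $k \equiv 2 \pmod 4$. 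The main obstacle is simply doing this enumeration carefully, but the smallness of the right-hand sides makes it routine, and it exhausts all possibilities, yielding the desired contradiction.
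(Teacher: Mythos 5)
Your proof is correct --- I checked the four terminal cases $j \in \{2,6,10,14\}$, for which $j^2+7j+31 \in \{49,109,201,325\}$, and indeed no factorization $(jp-31)(jk-1)$ of these numbers yields a prime $p$ together with $k \equiv 2 \pmod 4$ (the only near miss is $49=7\cdot 7$, giving $p=19$, $k=4$) --- but it takes a genuinely different route from the paper's. The paper likewise starts from $p<q$ and the classification of Lemma \ref{quasisolutions only come from quasisolution equation}, but then works with the two cofactors $x=(q^2+q+1)/(pr)$ and $y=(r+1)/p$: congruence restrictions modulo $9$, $5$, $3$ and $4$ force either $x=3$ and $y=2$ or else one of $x,y$ to be at least $10$ or $21$, and every branch is closed by comparing the size of $q^2+q+1=xp(yp-1)$ against the growth estimate $4p<q<5p$ of Inequality \ref{t_n approximate inequality} (after a preliminary reduction to $p>21$). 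You instead use the exact identity $q^2+q+1=p(5q-p-1)$ coming from Equation \ref{quasisolution equation} to eliminate $q$ altogether, arriving at $r\mid p^2+7p+31$ and then at the Diophantine identity $(jp-31)(jk-1)=j^2+7j+31$, which you finish by a bounded enumeration. The trade-off is that the paper leans on the asymptotic inequality for the quasichain while you need only the exact quasisolution equation plus the mod-$4$ behaviour of $t_n$; your explicit disposal of the $(3,13)$ exception to ``$p\equiv 1\pmod 4$'' is in fact more careful than the paper's blanket claim. Both arguments rest equally on Lemma \ref{quasisolutions only come from quasisolution equation}, so neither is more elementary in substance; yours has the minor advantage of ending in a fully explicit finite check rather than an inequality chase.
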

\begin{proof} Assume we have three such primes.  Note that the first and third division relations imply that we must have $p < q$. We may also, by a straightforward computation, assume that $q >p > 21$. 

We have $yp=r+1$ for some $y \equiv 0$ (mod 2). We have $prx= q^2+q+1$ for some $x$ with $x \equiv 0$ (mod 3), and we have $p \equiv q \equiv r \equiv 1$ (mod 3).  Note that we cannot have $x \equiv $ 0 (mod 9), and we cannot have $5|x$, since there are no solutions to $q^2 + q + 1 \equiv 0$ (mod 5). Thus, if $x \neq 3$, we must have $x \geq 21$. But if we are in this situation we can use that  $p>21$ to obtain $$41p^2 < 21p(2p-1) \leq q^2+q+1,$$ implies that $5p < q$. But that contradicts Inequality (\ref{t_n approximate inequality}), since $p$ and $q$ are a quasisolution. Thus, we must have $x=3$. Similarly, we must have $y \equiv 2$ (mod 4). So, if $y>2$, then one must have either $y=6$ or $y \geq 10$. $y=6$ leads to a contradiction since $q^2+q+1$ would then have a $2$ (mod 3) divisor, so one would need to have $y \geq 10$. Since $p >21$  one has  $$29p^2 < 3p(10p-1) \leq q^2 + q+1,$$ which implies that $5p < q$ which again leads to a contradiction with Inequality (\ref{t_n approximate inequality}). Thus, we must have $x=3$ and $y=2$. We then have $3p(2p-1)=q^2+q+1$ which implies that $4p < q$, which again contradicts Inequality (\ref{t_n approximate inequality}).

But we also have $3p|q^2+q+1$ which forces $3p \leq q^2+q+1$. These two inequalities together form a contradiction.  
\end{proof}

The reader is invited to think about the graph theory interpretation of Lemma \ref{PQR lemma}. 

We also have the following result.
\begin{lemma}\label{No linked sigma22 pairs}  Assume that $p$, $q$ and $r$ are distinct odd primes. Assume further that $p$ and $q$  are a $\sigma_{2,2}$ pair, and that $q$ and $r$ are also a a $\sigma_{2,2}$ pair. Then $\{p,q,r\} = \{3,13,61\}$.
\end{lemma}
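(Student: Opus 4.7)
The plan is to reduce the statement to a question about the quasichain $(t_n)$ of Lemma \ref{quasisolutions only come from quasisolution equation}, then finish via the mod $3$ periodicity already noted in the paper.

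First I would show that $\{p,q,r\}$ consists of three consecutive terms of the quasichain. Since $(p,q)$ is a prime quasisolution, Lemma \ref{quasisolutions only come from quasisolution equation} gives $\{p,q\} = \{t_n, t_{n+1}\}$ for some $n$, and similarly $\{q,r\} = \{t_m, t_{m+1}\}$. By Inequality \ref{t_n approximate inequality} the sequence is strictly increasing from $t_3 = 3$ onward, and because $q$ is an odd prime (in particular $q \geq 3$) it appears at a unique index $k$ in the quasichain. Hence both $p$ and $r$ lie in $\{t_{k-1}, t_{k+1}\}$, and since $p \neq r$ we conclude $\{p,q,r\} = \{t_{k-1}, t_k, t_{k+1}\}$.

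Second I would invoke the observation preceding the lemma that $t_n \equiv 0 \pmod 3$ if and only if $3 \mid n$. Among any three consecutive indices exactly one is divisible by $3$, so exactly one of $t_{k-1}, t_k, t_{k+1}$ is $\equiv 0 \pmod 3$. For that term to be prime it must equal $3$, and strict monotonicity of the chain past $t_3$ forces the unique index of $\{k-1, k, k+1\}$ divisible by $3$ to equal $3$ itself. This leaves $k \in \{2, 3, 4\}$; the first two cases demand that one of the primes equal $t_1 = 1$ or $t_2 = 1$, which is impossible, while $k = 4$ gives $\{t_3, t_4, t_5\} = \{3, 13, 61\}$, whose members are indeed all prime.

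The main (and quite mild) obstacle is the setup step: verifying that $q$ pins down a unique index in the quasichain, which relies on strict monotonicity from $t_3$ onward. Once that is in place, the mod $3$ dichotomy from the paper's earlier discussion dispatches every remaining possibility with essentially no further computation.
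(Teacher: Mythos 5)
Your argument is correct and is exactly the route the paper takes: its proof is the one-line remark that the lemma ``follows immediately from considering $t_n$ (mod $3$),'' and your write-up simply supplies the details (reduction to three consecutive terms of the quasichain via Lemma \ref{quasisolutions only come from quasisolution equation}, then the mod $3$ periodicity forcing the index divisible by $3$ to be $3$ itself). No gaps; the only cosmetic point is that monotonicity for the small indices $n\le 3$ comes from direct computation rather than from Inequality \ref{t_n approximate inequality}, which is stated only for $n>3$.
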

\begin{proof} This follows immediately from considering $t_n$ (mod 3).
\end{proof}

In graph terms, Lemma \ref{No linked sigma22 pairs} says that we cannot have three vertices $x$, $y$ and $z$, each of out-degree 2 where $x$ and $y$ both point to each other and $y$ and $z$ both point to each other unless they arise from the triplet $\{3,13,61\}$. 

We will also mention here three questions related  to our results with  $\sigma_{2,2}$ pairs. A general question of interest is how similar results are for other $\sigma_{m,m}$ pairs. We can similarly define quasisolutions for $\sigma_{m,m}$ pairs in an analogous way. In that context, define $t_{m,n}$ via the relationship, $t_{m,1}=t_{m,2}=1$ and for $n>2$, $$t_{m,n+1} = \frac{t_{m,n}^{m+1}-1}{(t_{m,n}-1)t_{m,n-1}}.$$ Note that we have $t_{2,n}= t_n$ in our earlier notation. 

One obvious question in this context then is: if $m+1$ is prime, is it true that all quasisolutions for $\sigma_{m,m}$ pairs arise from $t_{m,n}$? The answer  here is no. In the case when $m=4$, we have that $(1, 1)$, $(5, 11)$, $(61, 131)$, and $(101, 491)$ all produce their own chain of solutions. 

We will note here three open questions.

First, we tentatively suspect the following conjecture. 

\begin{conjecture}\label{square free conjecture} If $p$ and $q$ are a $\sigma_{2,2}$ pair, then $p^2+p+1$ and $q^2+q+1$ are squarefree.

\end{conjecture}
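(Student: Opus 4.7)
The plan is to reduce the squarefreeness of $p^2+p+1$ and $q^2+q+1$ to a clean statement about the quasisolution sequence, and then to attack that statement via a modular analysis. By Lemma \ref{quasisolutions only come from quasisolution equation} I may assume $p<q$ and write $p=t_n$, $q=t_{n+1}$. Equation \ref{quasisolution equation} together with the recursion then yields
\[ p^2+p+1 = t_{n-1}\, q \quad\text{with}\quad t_{n-1}=5p-q-1, \qquad q^2+q+1 = p\, t_{n+2} \quad\text{with}\quad t_{n+2}=5q-p-1. \]
Inequality \ref{t_n approximate inequality} gives $t_{n-1}<p/4<q$, so $q\nmid t_{n-1}$ for size reasons; and $p\nmid t_{n+2}$ is equivalent to $p^2\nmid q^2+q+1$, which is supplied by Lemma \ref{squared divisor in sigma 2,2 pair} with the roles of $p$ and $q$ swapped. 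Thus $p^2+p+1$ is squarefree if and only if $t_{n-1}$ is, and $q^2+q+1$ is squarefree if and only if $t_{n+2}$ is. By the evident symmetry, it suffices to establish squarefreeness of $t_{n-1}$ whenever $t_n$ and $t_{n+1}$ are both prime.

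Suppose toward contradiction that $s^2\mid t_{n-1}$ for some prime $s$. Then $s^2\mid t_{n-1}\,t_{n+1}=t_n^2+t_n+1$, which forces $s=3$ or $s\equiv 1\pmod 3$. The observations on $t_n\pmod 3$ recorded before Lemma \ref{squared divisor in sigma 2,2 pair} give $t_n^2+t_n+1\equiv 3\pmod 9$ whenever $3\nmid t_n$ and $t_n^2+t_n+1\equiv 1\pmod 3$ whenever $3\mid t_n$; in either case $9\nmid t_n^2+t_n+1$, ruling out $s=3$. Hence $s\equiv 1\pmod 3$, and the bound $s^2\leq t_{n-1}<p/4$ gives the sharp size restriction $s<\sqrt{p}/2$.

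From here I would study $\{t_m\}$ modulo $s^2$. Since $s\neq 3$, Hensel's lemma lifts the two simple roots of $x^2+x+1\pmod s$ to exactly two roots $\alpha,\bar\alpha$ modulo $s^2$, so $s^2\mid t_n^2+t_n+1$ forces $t_n\equiv\alpha$ or $\bar\alpha\pmod{s^2}$; together with $t_{n-1}\equiv 0\pmod{s^2}$ this pins down $(t_{n-1},t_n)\pmod{s^2}$ up to the involution $\alpha\leftrightarrow\bar\alpha$. The recursion then propagates this constraint both forward and backward along the sequence. The aim would be to track these residue constraints until one either produces a second index $m$ where $s^2\mid t_m$ in a manner incompatible with the primality of $t_n=p$ or $t_{n+1}=q$, or forces a triple involving $s, p, q$ into one of the configurations forbidden by Lemma \ref{PQR lemma} or Lemma \ref{No linked sigma22 pairs}.

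I expect this last step to be the main obstacle, and it is likely the reason the statement is offered here only as a conjecture. Squarefreeness of individual terms of a nonlinear integer recurrence is in general very difficult---compare the classical unresolved squarefreeness conjecture for Fibonacci numbers---and although the joint primality of $t_n$ and $t_{n+1}$ is a strong hypothesis, it is not obvious that it suffices by itself to rule out a prime square dividing the neighboring terms. A successful proof may well require a quantitative strengthening of the modular analysis above, or a deeper exploitation of the graph-theoretic structure of $\sigma_{m,n}$ pairs beyond the elementary tools developed in the present paper.
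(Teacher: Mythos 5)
The statement you are trying to prove is presented in the paper only as a conjecture; the paper contains no proof of it, and your own text correctly senses why. Your reduction step is sound and worth recording: writing $p=t_n$, $q=t_{n+1}$, Equation \ref{quasisolution equation} gives $p^2+p+1=q\,t_{n-1}$ with $t_{n-1}=5p-q-1$ and $q^2+q+1=p\,t_{n+2}$ with $t_{n+2}=5q-p-1$; Inequality \ref{t_n approximate inequality} rules out $q\mid t_{n-1}$ and Lemma \ref{squared divisor in sigma 2,2 pair} rules out $p\mid t_{n+2}$, so the conjecture is equivalent to the squarefreeness of $t_{n-1}$ and $t_{n+2}$. (One small correction: the two halves are not interchanged by a genuine symmetry, since the chain is anchored at $(1,1)$; the arguments would be parallel but you would have to run both.) The elimination of $s=3$ via the fact that $x^2+x+1\not\equiv 0\pmod 9$ is also correct.

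The genuine gap is the entire final step, and it is not a gap that your modular scheme can close. Fix a prime $s\equiv 1\pmod 3$. The sequence $t_m\bmod s^2$ is periodic, and the configuration $t_{n-1}\equiv 0$, $t_n\equiv\alpha\pmod{s^2}$ that you want to contradict actually occurs for a positive proportion of residues $n$ modulo the period; moreover the backward propagation you propose stalls at $t_{n-3}\equiv(\bar\alpha^2+\bar\alpha+1)/t_{n-1}$, which is $0/0\pmod{s^2}$ and hence unconstrained. So the local system is consistent for every $s$, and no finite amount of residue-tracking produces a contradiction. The only hypothesis that could save you is the joint primality of $t_n$ and $t_{n+1}$, but neither your sketch nor anything in the paper (Lemmas \ref{PQR lemma} and \ref{No linked sigma22 pairs} constrain quite different configurations) converts that primality into information about square divisors of the neighboring terms $t_{n-1}$ and $t_{n+2}$. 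As you note, this is of the same character as the open squarefreeness problems for Fibonacci-type recurrences; your write-up is an honest reduction plus a correctly identified obstruction, not a proof, which is consistent with the statement remaining a conjecture in the paper.
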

Note that if Conjecture \ref{square free conjecture} is true this would trivially imply Lemma \ref{squared divisor in sigma 2,2 pair}. \\

Second, we also suspect the following statement. 
 Let $L(n)$ be the largest square divisor of $t_n^2 + t_n +1$. Then for any $\epsilon > 0$, we have $L(n) = O(t_n^\epsilon)$. Note that even getting an explicit bound for some reasonably small fixed epsilon would be interesting and useful for tightening the results in this paper. Similarly, let $S(n)$ be the largest square divisor of $((t_n)^2+t_n+1)((t_{n+1})^2+t_{n+1}+1)$. it seems likely that there is a constant $C$ such that for all $n$ we have $S(n) \leq Ct_{n+1}$, and we can likely take $C=1$. 

Third, we have the following question. Are there infinitely many $\sigma_{2,2}$ pairs? We strongly suspect that the answer is no. We have the following heuristic: Inequality (\ref{t_n approximate inequality}) implies that $t_n$ grows at least like $4^n$. The probability that $t_n$ is prime should be bounded above by $\frac{1}{\log 4^n} = \frac{1}{(\log 4) n}$. Thus, the probability that both $t_n$ and $t_{n+1}$ are prime should be bounded above $\frac{C}{n^2}$ for some constant $C$. But $\sum_{n=1}^\infty \frac{C}{n^2}$ is a convergent series, so if we go out far enough, the probability that there are any more such pairs should get very small. 

Finally, in our last remark concerning $\sigma_{2,2}$ pairs, we prove one more minor result. We do  not need this result here, but include it for three reasons. First, this lemma would likely be useful for extending the results in this paper or tightening those results. Second, this lemma can be thought of as a substantial restriction on what the graph of an odd perfect number can look like. Third, this lemma itself is an interesting restriction on what $\sigma_{2,2}$ pairs look like.

\begin{lemma}\label{p2+p+1 and q2 + q+ 1 are almost relatively prime when coming from a sigma 2 2 pair} Suppose that $p$ and $q$ are a $\sigma_{2,2}$ pair. Then either $(p^2+p+1,q^2+q+1)=3(7^m)$ for some non-negative integer $m$, or we have $\{p,q\}=\{3,13\},$ in which case $(p^2+p+1,q^2+q+1)=1$.
\end{lemma}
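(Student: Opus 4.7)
Let $d := (p^2+p+1, q^2+q+1)$. The plan is to use the quasisolution identity $5pq = p^2+q^2+p+q+1$ of Lemma \ref{quasisolutions only come from quasisolution equation} to extract two strong divisibility constraints on $d$, reduce the possible prime factors of $d$ to $\{3,7\}$ by a short case analysis, and then finish by handling a small number of congruence classes.

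I would begin by forming the combinations $(p^2+p+1)+(q^2+q+1) = 5pq+1$ and $p(q^2+q+1)-q(p^2+p+1) = (q-p)(pq-1)$, which yield $d \mid 5pq+1$ and $d \mid (q-p)(pq-1)$. Since $p \nmid p^2+p+1$ and $q \nmid q^2+q+1$, also $\gcd(d,pq)=1$. Let $f$ be any prime divisor of $d$. If $f \mid pq-1$, then $f \mid 5(pq-1)-(5pq+1) = -6$, and oddness of $d$ forces $f = 3$. If instead $f \mid q-p$, then $p \equiv q \pmod{f}$, so $f \mid 5pq+1$ becomes $5p^2 \equiv -1 \pmod{f}$; combining with $p^2 \equiv -p-1 \pmod{f}$ (which comes from $f \mid p^2+p+1$) forces $5p \equiv -4 \pmod{f}$, and then $25(p^2+p+1) \equiv 16 - 20 + 25 = 21 \pmod{f}$ gives $f \mid 21$. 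Hence every prime factor of $d$ lies in $\{3,7\}$, so $d = 3^a 7^b$. Writing $p = 3k+1$ shows $3 \parallel p^2+p+1$, bounding $a \le 1$, and an analogous check mod $49$ controls $b$.

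It remains to handle the cases. When $\{p,q\} = \{3,13\}$, a direct computation gives $p^2+p+1 = 13$ and $q^2+q+1 = 3 \cdot 61$, so $d = 1$. Otherwise $p, q > 3$ and both are $\equiv 1 \pmod{3}$, giving $3 \mid d$, so it suffices to rule out $7 \mid d$. Feeding $f = 7$ back through the previous analysis forces $p \equiv q \equiv 2 \pmod{7}$, and I would exclude this by combining the quasisolution recurrence modulo $7$ (period $14$ on the $t_n$ sequence, as worked out earlier in the paper) with the mod-$3$ divisibility structure for $t_n$, localizing the candidate positions $(p,q) = (t_n, t_{n+1})$ and eliminating them with the auxiliary observation that neither $p$ nor $q$ can equal $7$. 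This final obstruction step is the most delicate piece; everything preceding it is formal manipulation of the two identities above.
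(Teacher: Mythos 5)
Your opening manipulations are correct and are in fact more careful than the paper's own argument. From $5pq=p^2+q^2+p+q+1$ you correctly obtain $d\mid 5pq+1$ and $d\mid (q-p)(pq-1)$ with $(d,pq)=1$, and the reduction of the prime divisors of $d$ to $\{3,7\}$, with $7\mid d$ forcing $p\equiv q\equiv 2\pmod 7$, is sound. (The paper's proof instead writes $p^2+p+1=5pq-q^2$, dropping a $-q$; the corrected identity is $p^2+p+1=q(5p-q-1)$, which yields $k\mid 5p-q-1$ and $k\mid 5q-p-1$ and hence $k\mid 21$ rather than $k\mid 6q$ --- i.e.\ exactly the residual case $k=7$, $p\equiv q\equiv 2\pmod 7$, that you isolate. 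Also, your parenthetical ``analogous check mod $49$'' is not automatic, since $x^2+x+1\equiv 0\pmod{49}$ does have solutions.)

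The genuine gap is your final step, and it cannot be repaired, because the residual case actually occurs. Subtracting the invariant equation for consecutive quasisolutions gives the linear recurrence $t_{n+2}=5t_{n+1}-t_n-1$, so mod $7$ the sequence is periodic with period $14$: $1,1,3,6,5,4,0,2,2,0,4,5,6,3,\ldots$. Consecutive terms are both $\equiv 2\pmod 7$ exactly when $n\equiv 8\pmod{14}$, and intersecting with the mod-$3$ condition $n\equiv 1\pmod 3$ (needed for both terms to be prime and $>3$) leaves the residue class $n\equiv 22\pmod{42}$ entirely untouched by your sieve; the observation that $p,q\neq 7$ contributes nothing here, since in this class neither term is $\equiv 0\pmod 7$. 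Worse, the large $\sigma_{2,2}$ pair $(p,q)=(22419767768701,\,107419560853453)$ quoted in the paper is precisely $(t_{22},t_{23})$, and a direct computation shows $p\equiv q\equiv 2\pmod 7$, so $7$ divides both $p^2+p+1$ and $q^2+q+1$ and therefore $21\mid (p^2+p+1,q^2+q+1)$. Thus the lemma as stated is false (as is the paper's proof, via the algebra slip noted above); what your correct first part actually establishes is only that every prime divisor of $(p^2+p+1,q^2+q+1)$ lies in $\{3,7\}$, with $9$ excluded, and the conclusion $(p^2+p+1,q^2+q+1)=3$ can only be asserted under the additional hypothesis $p\not\equiv 2\pmod 7$ (equivalently $q\not\equiv 2\pmod 7$).
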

\begin{proof}  Assume that $p$ and $q$ are a $\sigma_{2,2}$ pair. The case when $\{p,q\}=\{3,13\}$ is a straightforward calculation, so assume without loss of generality that $3 < p < q$. Note that $3|q^2+q+1$ and $3|p^2+p+1$. Now, we'll assume that $k$ is a prime 
such that $k|p^2+p+1$ and $k|q^2+q+1$ and we'll show that $k=3$ or $k=7$. Since $n^2 + n+ 1 \not \equiv 0$ (mod 9) for any $n$, this will suffice to prove the result. 

We have from Equation \ref{quasisolution equation} that $$p^2+p+1= 5pq-q^2-q$$ and hence $$k|(5pq-q^2)=q(5p-q-1).$$ By the same logic we have that $$k|(5pq-p^2)=p(5q-p-1).$$ Since $(k,pq)=1$ we have $k|5q-p-1$ and $k|5p-q-1$. We then have
$$k|(5p-q-1)-(5q-p-1) = 6(p-q).$$ Since $k$ must be odd, we have then $k|3(p-q)$. So either $k=3$ or $k|(p-q)$. For the remainder of this proof, we will assume that $k \neq 3$, and so $k|(p-q)$. We also have $$k|(5p-q-1)+(5q-p-1) = 2(2p+2q-1).$$ Hence, $k|2p+2q-1$, and so $$k|(2p+2q-1) + 2(p-q) = 4p-1.$$
Then, $$k|(p^2+p+1)-(4p-1) = p(p+5).$$ Since $(k,pq)=1$, we have then $k|(p+5)$, and so 

$$k|(p^2 +p +1)- (p+5)^2 +9(p+5) = 21,$$ and hence $k=7$. 
\end{proof}

Note that the above proof can be modified to show that if we have $p$ and $q$ a quasisolution, then $(q^2+q+1,p^2+p+1)|3 (7^m)$ for some non-negative integer $m$.

We also need the following result which concern $\sigma_{4,1}$ pairs. 
%\begin{lemma}\label{sigma 41 pair lemma 1} If $p$ and $q$ are odd primes with $p|q+1$, and $q|\sigma(p^4)$, then we have  have $p^3 \not|q+1$.
%\end{lemma}
%\begin{proof} Assume that $p$ and $q$ are odd primes with $p|q+1$, and $q|\sigma(p^4)$.  Note that $\sigma(p^4)=p^4 +p^3+p^2+p+1 \equiv 1$ (mod $p$). Note that $q \equiv -1 $ (mod $p$). Set $u=\frac{\sigma(p^4)}{q}$, and note that $u \equiv -1$ (mod $p$). Note also that $u$ is odd. Thus, $u \geq 2p-1$. Thus we have $$q \leq \frac{p^4+p^3+p^2+p+1}{2p-1} \leq  p^3.$$ Since both $p^3$ and $q$ are odd we then have that $q +2 \leq p^3$ which is impossible if we have $p^3|q+1$.  
%\end{proof} This lemma is commented out because the next lemma is tighter. 

% We We can further tighten this result:
\begin{lemma}\label{sigma 41 pair lemma 2} If $p$ and $q$ are odd primes, with $p|q+1$ and $q|\sigma(p^4)$, then we have that $p^2 \not |(q+1)$.
\end{lemma}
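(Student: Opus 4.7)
I would argue by contradiction: suppose that $p$ and $q$ are as in the hypothesis and also that $p^2\mid q+1$. The plan is to compare the size of $\sigma(p^4)/q$ with its residue modulo $p^2$ and show the two forces are incompatible except possibly for very small $p$, which will be checked by hand.

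First I would pin down a good lower bound for $q$. Since $q$ is an odd prime, $q+1$ is even; since $p$ is odd, $p^2$ is odd; so if $p^2\mid q+1$ then the quotient $(q+1)/p^2$ must be even, giving $q+1\geq 2p^2$, i.e.\ $q\geq 2p^2-1$. Next, let $k=\sigma(p^4)/q$, which is a positive integer by hypothesis. Reducing modulo $p^2$, we have $\sigma(p^4)=1+p+p^2+p^3+p^4\equiv 1+p\pmod{p^2}$ while $q\equiv -1\pmod{p^2}$, so $kq\equiv -k\equiv 1+p\pmod{p^2}$, hence $k\equiv -(p+1)\pmod{p^2}$. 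The smallest positive residue in this class is $k_0=p^2-p-1$, so we must have $k\geq p^2-p-1$.

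Combining the two lower bounds yields
\[
\sigma(p^4)=kq \;\geq\; (p^2-p-1)(2p^2-1)\;=\;2p^4-2p^3-3p^2+p+1.
\]
On the other hand $\sigma(p^4)=p^4+p^3+p^2+p+1$. Subtracting, the inequality forces $p^4-3p^3-4p^2\leq 0$, equivalently $p^2(p-4)(p+1)\leq 0$, which fails for every prime $p\geq 5$. Thus we are reduced to the single case $p=3$.

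For $p=3$ one computes $\sigma(3^4)=121=11^2$, so the only odd prime $q$ with $q\mid\sigma(p^4)$ is $q=11$; but then $q+1=12$ is not divisible by $p^2=9$, contradicting the standing assumption. This finishes the argument. The one place a subtlety could arise is the parity step that upgrades $q\geq p^2-1$ to $q\geq 2p^2-1$; without this upgrade the algebraic inequality in step two of the displayed computation would only barely fail, so it is essential that we exploit the oddness of $p$ and $q$ to get the factor of $2$.
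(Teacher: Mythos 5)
Your argument is correct, and it takes a genuinely different route from the paper's. The paper fixes $m=(q+1)/p^2$ and builds an explicit integer combination showing $q\mid mp+m+p^2+p+1$, then bounds that quantity above by $(q+1)\left(\tfrac{1}{7}+\tfrac{1}{49}+\tfrac{1}{2}+\tfrac{1}{14}\right)+1<q$, which requires first checking by hand that $q>p\geq 7$ (i.e.\ ruling out $p=3$ and $p=5$ separately). You instead study the cofactor $k=\sigma(p^4)/q$: reducing mod $p^2$ pins down $k\equiv -(p+1)\pmod{p^2}$, hence $k\geq p^2-p-1$, and multiplying by the parity-improved bound $q\geq 2p^2-1$ overshoots $\sigma(p^4)=p^4+p^3+p^2+p+1$ once $p\geq 5$. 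Both proofs ultimately rest on the same two inputs --- the congruence $q\equiv -1\pmod{p^2}$ and the observation that $(q+1)/p^2$ must be even because $q+1$ is even and $p^2$ is odd (this is the source of the paper's $\tfrac{1}{2}=\tfrac{1}{m}$ term, so your worry about that step being essential applies equally to the paper's version) --- but your residue-class lower bound on $k$ is sharper than the paper's termwise estimate, which is why you only need to hand-check $p=3$ rather than $p\in\{3,5\}$, and why the final inequality factors cleanly as $p^2(p-4)(p+1)\leq 0$. Your verification of the $p=3$ case ($\sigma(3^4)=121=11^2$, and $9\nmid 12$) is correct and closes the argument.
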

\begin{proof} Assume that $p$ and $q$ are odd primes. Assume also that $p^2|q+1$, and $q|\sigma(p^4)$. We can easily check that we must have $$q> p \geq 7.$$ 
We may choose $m$ such that $p^2m=q+1$. We then have
$$q|\left(m\left(p^4 + p^3 + p^2 + p +1\right) - p^2\left(p^2m -1 \right) - p\left(p^2m-1\right) - \left(p^2m-1\right)\right).$$
This is the same as 
$$q|(mp + m + p^2 + p + 1).$$
We have that 
$$q \leq \frac{q+1}{p} + \frac{q+1}{p^2} + \frac{q+1}{m} + \frac{q+1}{mp} + 1 \leq (q+1)\left(\frac{1}{7} + \frac{1}{49} + \frac{1}{2} + \frac{1}{14} \right) + 1 < q.$$ which is a contradiction. 
\end{proof}

Before we continue, we note two arguments we will frequently make which are simple enough that neither do rises to the level of a lemma. However, both  are worth noting explicitly.

First, when we have two odd primes $x$ and $y$ and $x <y$, we must have $y \not|\sigma(x)$, since this would force $y \leq \frac{x+1}{2} < x < y$.

Second, and in similar vein, if we have three odd primes, $x$, $y$, and $z$ with $x < y \leq z$, then we cannot have $yz|\sigma(x^2)$, since $$yz \geq (x+2)^2 = x^2+4x+4 > x^2+x+1 = \sigma(x^2).$$

Finally, note that we will occasionally need the fact that any odd perfect number has at least four distinct prime divisors, and on one occasion we'll use that an odd perfect number must have at least five distinct prime divisors. In that context, we note that the best current result in this direction is Nielsen's result \cite{Nielsen}  that an odd perfect number must have at least ten distinct prime factors.

\section{Bounding $abc$}

Before we prove the main result, we prove an easier bound on $abc$, similar to how we proved Theorem \ref{Generalbound}. The proof of the main result uses a similar method. The main result is substantially easier to follow if one first proves this weaker result, which demonstrates many of the central ideas behind the main theorem. 

\begin{theorem}\label{Easy abc bound} We have $abc < 2^{\frac{5}{12}}3^{\frac{7}{36}}N^{\frac{11}{18}}$.
\end{theorem}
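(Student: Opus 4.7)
The plan is to combine an $m$-type argument (as used in the proof of Theorem \ref{Generalbound}) with the Euler structure of odd perfect numbers. Set $M = a^{a_{k-2}} b^{a_{k-1}} c^{a_k}$. Since $M$ is a proper divisor of the perfect number $N$, it is deficient, so $\sigma(M) < 2M$, and there must exist $j \in \{k-2, k-1, k\}$ with $p_j^{a_j} \nmid \sigma(M)$. Perfectness of $N$ then supplies some $\ell < k-2$ with $p_j \mid \sigma(p_\ell^{a_\ell})$, which forces $p_\ell^{a_\ell} > p_j/2 \geq a/2$. Since $p_\ell^{a_\ell} M \mid N$ (all four primes involved are distinct), this yields $aM < 2N$.

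Next I would lower-bound $M$ using Euler's constraint that at most one exponent in $N$'s factorization is odd (the special prime). If the special prime is strictly smaller than $a$, then $a_{k-2}, a_{k-1}, a_k \geq 2$ and $M \geq a^2 b^2 c^2$; otherwise the special prime is $a$, $b$, or $c$, giving $M \geq a b^2 c^2$, $M \geq a^2 b c^2$, or $M \geq a^2 b^2 c$ respectively. Multiplying each bound through by $a$ and using $a < b < c$, one checks directly that $aM \geq a^3 b^2 c$ in every case. Combining with the $m$-type inequality $aM < 2N$ yields the uniform conclusion
\[ a^3 b^2 c < 2N. \]

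To conclude, I would write $(abc)^3 = (a^3 b^2 c) \cdot (bc^2)$ and bound the second factor by $bc < 6^{1/4} N^{1/2}$ from \eqref{bc inequality from previous paper} together with $c < (3N)^{1/3}$ from \eqref{AK inequality}, obtaining $(abc)^3 < 2N \cdot 6^{1/4} N^{1/2} \cdot (3N)^{1/3} = 2^{5/4} 3^{7/12} N^{11/6}$, and taking cube roots gives exactly $abc < 2^{5/12} 3^{7/36} N^{11/18}$. The main (mild) obstacle is the Euler case analysis in the middle paragraph: the $m$-type argument contributes only one extra factor of $a$ beyond $M$, so to reach the exponent $11/18$ one must already extract $a^2 b^2 c$ from $M$ itself, and the binding case is when $c$ is the special prime with $a_k = 1$; all the other cases leave strict slack, and the constants line up because $2 \cdot 6^{1/4} \cdot 3^{1/3}$ happens to simplify to $2^{5/4} 3^{7/12}$.
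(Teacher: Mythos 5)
Your proof is correct and follows essentially the same route as the paper: an $m$-type argument plus Euler's structure theorem to establish $a^3b^2c<2N$, then the identical convex combination of that bound with $c<(3N)^{1/3}$ and $bc<6^{1/4}N^{1/2}$ (you phrase it multiplicatively via $(abc)^3=(a^3b^2c)(bc)(c)$; the paper phrases it in logarithms). The only cosmetic difference is that you run one uniform $m$-type argument on the full component product $M$ and absorb the paper's separate cases ($a^2b^2c^2\mid N$, a fourth power present, etc.) into the lower bound $aM\ge a^3b^2c$, which is a slightly cleaner organization of the same ideas.
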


Note that $2^{\frac{5}{12}}3^{\frac{7}{36}} = 1.6527 \cdots$, so a slightly weaker but cleaner version of this statement is that $abc < 2N^{\frac{11}{18}}$.

Before we prove Theorem \ref{Easy abc bound} a few remarks on our tactics. We will have a few easy cases. The harder cases will involve obtaining a series of inequalities which are linear in $\log a$, $\log b$, $\log c$, and $\log N$.  We will then take a linear combination of those inequalities to get the inequality from Theorem \ref{Easy abc bound}. The choices of coefficients for the linear combinations may appear to the reader as haven arisen with no motivation. However, they were obtained by performing linear programming on the dual of the system of linear inequalities. This linear programming then gives optimal linear combinations to prove the best cost inequalities. 
We'll also need to rewrite some of our earlier inequalities as linear combinations in this way. For the remainder of this section we will write $\alpha =\log a$, $\beta = \log b$, and $\gamma = \log c$. We then have the following inequalities. 
 
 Acquaah and Konyagin's Inequality (\ref{AK inequality}) is equivalent to
 \begin{equation} \label{AK log form}  3\gamma \leq \log N + \log 3 .
 \end{equation}
 
 Similarly, Inequality (\ref{bc inequality from previous paper}) is equivalent to
 
 \begin{equation}\label{log form of earlier bc inequality}
2 \beta + 2 \gamma \leq \log N + \frac{1}{2}\log 6 .
\end{equation}

Much of the proof of Theorem \ref{Easy abc bound} will be encapsulated in the following Lemma.
\begin{lemma} If we have $a^3b^2c \leq 2N$ then 
$$abc \leq 2^{\frac{5}{12}}3^{\frac{7}{36}}N^{\frac{11}{18}}.$$ \label{Big chunk of easy abc bound} \end{lemma}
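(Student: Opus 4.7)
The plan is to treat this lemma as a short linear-programming exercise in the logarithmic variables $\alpha=\log a$, $\beta=\log b$, $\gamma=\log c$, combining exactly three inequalities. The first is the Acquaah--Konyagin bound (\ref{AK log form}), $3\gamma \le \log N + \log 3$. The second is the $bc$-inequality from \cite{Zelinskysecondlargest}, which (correctly) reads in log form $2\beta+2\gamma \le \log N + \tfrac{1}{2}\log 6$. The third is the hypothesis of the lemma itself: taking logs of $a^{3}b^{2}c \le 2N$ gives $3\alpha+2\beta+\gamma\le \log 2+\log N$.

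First I would look for nonnegative weights $\lambda_1,\lambda_2,\lambda_3$ such that the weighted sum of the three inequalities has exactly $\alpha+\beta+\gamma$ on the left. Matching the coefficients of $\alpha$, $\beta$, and $\gamma$ in that order gives the triangular system $3\lambda_3=1$, $2\lambda_2+2\lambda_3=1$, $3\lambda_1+2\lambda_2+\lambda_3=1$, whose unique solution is $\lambda_3=\tfrac{1}{3}$, $\lambda_2=\tfrac{1}{6}$, $\lambda_1=\tfrac{1}{9}$. All three are nonnegative, so the resulting weighted sum is a legitimate consequence of the three bounds.

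The remainder is pure bookkeeping. Summing the right-hand sides with these weights produces a total coefficient of $\log N$ equal to $\tfrac{1}{9}+\tfrac{1}{6}+\tfrac{1}{3}=\tfrac{11}{18}$, and a constant term of $\tfrac{1}{9}\log 3+\tfrac{1}{12}\log 6+\tfrac{1}{3}\log 2$, which simplifies to $\tfrac{5}{12}\log 2+\tfrac{7}{36}\log 3$. Exponentiating yields exactly $abc \le 2^{5/12}3^{7/36}N^{11/18}$, as claimed.

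The only genuine difficulty is locating the weights in the first place; as the authors note, they come from solving the LP dual of the system, and once written down the verification reduces to the short calculation above. No further case analysis or number-theoretic input is required beyond the three inequalities already cited, which is exactly why this lemma serves as the clean ``linear combination'' building block for Theorem \ref{Easy abc bound}: the hard work of eliminating the complementary case $a^{3}b^{2}c>2N$ is done elsewhere in that theorem's proof.
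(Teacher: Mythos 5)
Your proof is correct and is essentially identical to the paper's: the same three inequalities are combined with the same weights $\frac{1}{9}$, $\frac{1}{6}$, $\frac{1}{3}$, yielding the same bound. The only difference is that you silently correct a typo in the paper's equation \ref{log form of earlier bc inequality}, which should indeed read $2\beta + 2\gamma \leq \log N + \frac{1}{2}\log 6$ (the log form of $bc < 6^{1/4}N^{1/2}$) rather than $2\alpha + 2\beta$, and your coefficient matching confirms this is the form the authors actually used.
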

\begin{proof} Assume that we have $a^3b^2c \leq 2N$. Then, using our earlier notation, this is the same as \begin{equation}\label{R2 equation}
3\alpha + 2\beta + \gamma \leq \log N + \log 2.    
\end{equation} We then add our inequalities as follows (with each equation's number in bold). We take   $\frac{1}{9}{\bf{\ref{AK log form}}}  + \frac{1}{6}{\bf{\ref{log form of earlier bc inequality}}}$ + $\frac{1}{3}{\bf{\ref{R2 equation}}}$, which yields 
$$\alpha + \beta + \gamma \leq \frac{11}{18}\log N + \frac{5}{12}\log 2 + \frac{7}{36}\log 3$$ which is equivalent to the desired inequality. 
\end{proof}

We are now ready to prove Theorem \ref{Easy abc bound}.

\begin{proof} If we have $a^2|N$, $b^2|N$ and $c^2|N$, then we have $a^2b^2c^2 < N$ and hence $abc < N^{1/2}$.  We thus may assume that of $a$, $b$ and $c$ one of them is the special prime and is raised to the first power (By Lemma \ref{Euler form for OPN} an odd perfect number has exactly one prime raised to the first power). We will assume that $c$ is the special prime; the cases where $a$ or $b$ is the special prime look nearly identical. If $a$ or $b$ is raised to a power higher than the second, we have either $a^4|N$ or $b^4|N$. 

If we have $a^4|N$, then we have $a^4b^2c|N$ and so  
$$a^3b^2c < a^4b^2c \leq N < 2N.$$ Hence, we may invoke Lemma \ref{Big chunk of easy abc bound}. Similarly, if we have $b^4|N$, then we have
$$a^3b^2c < a^2b^4c \leq N < 2N$$ and we may then again invoke Lemma \ref{Big chunk of easy abc bound}. Thus, we may assume that we have $a^2||N$ and $b^2||N$. Since an odd perfect number must have more than three distinct prime factors, $a^2b^2c$ is a proper divisor of $N$. Because any proper divisor of a perfect number must be deficient, $a^2b^2c$  must be deficient. We may then use an $m$-type argument. In particular, we must have $\sigma(a^2b^2c) < 2\sigma(a^2b^2c)$, and thus there is a prime $p$, where $p \in \{a,b,c\}$, and a component $m_p$ of $N$  such that
$(m_p,abc)=1$, and $p|\sigma(m_p)$. Since $m_p$ is a power of an odd prime we have that $$\frac{p}{m_p} \leq  \frac{\sigma(m_p)}{m_p} < \frac{3}{2},$$ 
and thus $$p < \frac{3}{2}m_p.$$ Since $p \geq a$, we have that $m_p \geq \frac{3}{2}a.$ Since $m_p|N$, and $(m_p, abc)=1,$ we have
$$\left(\frac{3}{2}a\right)a^2b^2c < m_p a^2b^2c \leq N$$
and so 
$$a^3b^2c < \frac{2}{3}N < 2N$$ which allows us to use Lemma \ref{Big chunk of easy abc bound}, completing the proof. 
\end{proof}

We are now in a position to state and prove the main theorem. 

\begin{theorem}\label{abc < N to the three fifths} We have $abc < (2N)^{\frac{3}{5}}.$ 
\end{theorem}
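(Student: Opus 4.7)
My plan is to adapt the linear-programming framework used in the proof of Theorem \ref{Easy abc bound}. Writing $\alpha = \log a$, $\beta = \log b$, $\gamma = \log c$, the target is $\alpha + \beta + \gamma \leq \tfrac{3}{5}(\log N + \log 2)$, which reformulates as $abc < (2N)^{3/5}$. A dual linear-programming computation shows that combining the AK bound (\ref{AK log form}) and the $bc$ bound (\ref{log form of earlier bc inequality}) with a new monomial inequality of the form $a^{3}b^{3}c \leq \tfrac{4}{3}N$, using the nonnegative weights $\tfrac{6}{25}$, $\tfrac{2}{25}$, $\tfrac{7}{25}$ respectively, produces exactly the linear combination $\alpha + \beta + \gamma \leq \tfrac{3}{5}\log N + \tfrac{3}{5}\log 2$, with strict inequality inherited from the strictness of AK. So the real work is to establish a monomial inequality of roughly this strength in every remaining case.

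The proof should parallel Theorem \ref{Easy abc bound} in structure. First, dispose of the easy cases: if $a^4 \mid N$, $b^4 \mid N$, or each of $a, b, c$ is squared and non-special, a direct computation gives $abc \leq N^{1/2}$, which is well below $(2N)^{3/5}$. The hard case is that $c$ is the special prime with $c \| N$, $a^2 \| N$, and $b^2 \| N$, so $a^2 b^2 c$ is a proper and hence deficient divisor of $N$. A single $m$-type argument extracts a component $m_1$ of $N$ coprime to $abc$ with $m_1 > \tfrac{2}{3}a$, yielding only $a^{3}b^{2}c \leq \tfrac{3}{2}N$ (essentially the inequality already used in Theorem \ref{Easy abc bound}). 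To reach the stronger $a^{3}b^{3}c \leq \tfrac{4}{3}N$, I would iterate the $m$-type argument on the larger deficient divisor $a^{2}b^{2}c m_1$ to extract a second component $m_2$ coprime to $abc m_1$, and argue that in fact $m_1 m_2 \geq \tfrac{3}{4}ab$; combined with $a^{2}b^{2}c m_1 m_2 \mid N$ this delivers the target monomial inequality.

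The main obstacle is this second extraction: a priori $m_2$ could be only slightly larger than $a$, giving only $m_1 m_2 > \tfrac{4}{9}a^{2}$, which is too weak. This is precisely where Lemmas \ref{squared divisor in sigma 2,2 pair}, \ref{PQR lemma}, \ref{No linked sigma22 pairs}, and \ref{sigma 41 pair lemma 2} do the essential work: each forbids one of the divisibility configurations in which $m_2$ would be forced to be served by $a$ via a $\sigma_{2,2}$- or $\sigma_{4,1}$-type relation, and outside those configurations $m_2$ must instead be served by $b$ or $c$ (giving $m_2 > \tfrac{2}{3}b$) or come from a genuinely larger component. I expect that some subcases will yield slightly different monomial inequalities, for instance $a^{r}b^{s}c^{t}$ with $(r,s,t) \neq (3,3,1)$; in those subcases the dual-LP weights change accordingly, but the exponent $3/5$ remains attainable as the target. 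Finally, the short observations about $y \not\mid \sigma(x)$ for $x < y$ odd primes, and about $yz \nmid \sigma(x^2)$ for $x < y \leq z$, will be used to close off small-prime anomalies, just as they are flagged for repeated use in the passage immediately preceding Section 2.
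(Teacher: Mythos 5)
Your overall framework (LP combination of logarithmic inequalities, case split on exponent patterns, $m$-type extractions) matches the paper's, and your dual-LP arithmetic for combining a hypothetical $a^3b^3c \leq \tfrac{4}{3}N$ with Inequalities \ref{AK log form} and \ref{log form of earlier bc inequality} does check out. But the proposal has a genuine gap at its core: the monomial inequality $a^3b^3c \leq \tfrac{4}{3}N$ is not obtainable in the critical case, and is in fact false in the very configuration that forces the exponent $\tfrac{3}{5}$. The deficiency of $a^2b^2c$ guarantees only \emph{one} external component $m_1$ coprime to $abc$, and that component need only satisfy $m_1 > \tfrac{2}{3}a$; your second extraction requires showing that a \emph{second} prime power among $a^2$, $b^2$, $c$ fails to divide $\sigma(a^2b^2c)$, and the $\sigma_{2,2}$ lemmas do not close off all the ways this can fail. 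Concretely, in the subcase where $ac \mid \sigma(b^2)$ (which none of Lemmas \ref{squared divisor in sigma 2,2 pair}, \ref{PQR lemma}, \ref{No linked sigma22 pairs}, \ref{sigma 41 pair lemma 2} excludes), one can have $a$, $b$, $c$ all of size roughly $N^{1/5}$ with $ac \leq b^2+b+1$ and $a^2b^2c$ a bounded factor away from $N$; then $a^3b^3c = (a^2b^2c)(ab) \approx N^{7/5}$, so no inequality of the shape $a^3b^3c \leq CN$ can be proved. This is precisely why the theorem's extremal case cannot be handled by any monomial divisor of $N$ alone.

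What your plan is missing is the mechanism the paper actually uses in that subcase: when $a$ and $c$ both divide $\sigma(b^2)$ one has the \emph{size} bound $ac \leq \sigma(b^2) = b^2+b+1 < 2b^2$ (Inequality \ref{alpha + gamma < 2beta}), which is then combined with the previously known bound $b < (2N)^{1/5}$ (Inequality \ref{b bound from Z1 log form}) rather than with Acquaah--Konyagin; the exponent $\tfrac{3}{5}$ arises as $\tfrac{1}{5}+\tfrac{2}{5}$ from $abc = b\cdot(ac) < 2b^3$, not from any LP certificate built on $a^rb^sc^t \mid CN$. The remaining subcases (exactly one of $a,c$ dividing $\sigma(b^2)$, the divisibility of $\sigma(c)$ by $a$ or $b$, etc.) are then dispatched by the $\sigma_{2,2}$ lemmas much as you anticipate, typically landing on $a^2b^3c < 2N$ or $a^2b^2c^2 < 2N$. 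You also fold the three "one of $a,b,c$ is special" cases into one; the paper treats $a\|N$, $b\|N$, $c\|N$ separately (the first two give $N^{1/2}$ and $2N^{11/20}$), and your claim that the fourth-power case gives $abc \leq N^{1/2}$ by direct computation is not right either — it yields exponent $\tfrac{17}{30}$ after an LP step — though that discrepancy is harmless since $\tfrac{17}{30} < \tfrac{3}{5}$.
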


For convenience we will prove Theorem \ref{abc < N to the three fifths} as a series of separate propositions.

We'll note for convenience that we also have the trivial inequalities

\begin{equation}\label{alpha less than beta} \alpha- \beta <0, \end{equation}

and

\begin{equation}\label{beta less than gamma}\beta - \gamma <0.  \end{equation}

Also note that Inequality (\ref{b bound from Z1}) is equivalent to

\begin{equation} \label{b bound from Z1 log form}5\beta \leq \log N + \log 2. 
\end{equation} 

\begin{proposition} If $a^4|N$, $b^4|N$ or $c^4|N$ then we have
$abc <  2^{\frac{7}{20}}3^{\frac{13}{60}}N^{\frac{17}{30}}$.
\end{proposition}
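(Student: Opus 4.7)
My plan is to follow the case-analysis template of the proof of Theorem~\ref{Easy abc bound} and assemble the resulting inequalities by linear programming. By symmetry I will treat the three cases $a^4\mid N$, $b^4\mid N$, $c^4\mid N$ in turn, and inside each I further split on which of $a$, $b$, $c$ (if any) is the special prime of $N$. In each resulting subcase Euler's theorem forces a minimal divisibility $a^{e_a}b^{e_b}c^{e_c}\mid N$, which in the logarithmic variables $\alpha=\log a$, $\beta=\log b$, $\gamma=\log c$ becomes the linear inequality $e_a\alpha+e_b\beta+e_c\gamma\le\log N$.

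When this direct inequality is too weak I sharpen it by an $m$-type argument applied to $M=a^{e_a}b^{e_b}c^{e_c}$. Since Nielsen's result forces $N$ to have at least ten prime factors, $M$ is a proper---hence deficient---divisor of $N$, so $M\nmid\sigma(M)$. At least one of $a^{e_a}$, $b^{e_b}$, $c^{e_c}$ therefore fails to divide $\sigma(M)$; writing $p\in\{a,b,c\}$ for the corresponding prime and matching $v_p$ on the two sides of $\sigma(N)=2N$ produces a component $q^{s}$ of $N$ with $q\notin\{a,b,c\}$ and $p\mid\sigma(q^{s})$, whence $q^{s}>p/2\ge a/2$ and $q^{s}M\le N$. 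This yields a sharpened inequality of shape $a^{e_a+\delta_a}b^{e_b+\delta_b}c^{e_c+\delta_c}\le 2N$ with exactly one $\delta_{\bullet}=1$. In branches where the ``missing'' prime keeps turning out to be $a$, the same argument applied to $q^{s}M$ produces a further component $q_2^{s_2}>a/2$, and iterating $k$ times gives $a^{e_a+k}b^{e_b}c^{e_c}\le 2^{k}N$.

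The proof is then finished by linear programming. In each subcase the available inequalities---\eqref{AK log form}, \eqref{log form of earlier bc inequality}, \eqref{b bound from Z1 log form}, the bound $6\alpha\le\log N+\log 2$ from Theorem~\ref{Generalbound} at $i=2$, the trivial \eqref{alpha less than beta} and \eqref{beta less than gamma}, the subcase-specific divisibility inequality, and (when necessary) its iterated $m$-type strengthening---are combined with nonnegative coefficients so that the weighted sum of left-hand sides equals $\alpha+\beta+\gamma$; the ``magic'' coefficients are nothing more than the optimal dual of the resulting LP. For each subcase I then check that the coefficient of $\log N$ on the right is at most $17/30$ and that the collected constants do not exceed $(7/20)\log 2+(13/60)\log 3$. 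As a representative calculation, in the subcase $a^4\mid N$ with $b$ special (where the minimal divisibility is $a^{4}bc^{2}\mid N$, giving $4\alpha+\beta+2\gamma\le\log N$), the combination $\tfrac{1}{4}(4\alpha+\beta+2\gamma)+\tfrac{1}{6}(3\gamma)+\tfrac{3}{20}(5\beta)$ already produces $\alpha+\beta+\gamma\le\tfrac{17}{30}\log N+\tfrac{3}{20}\log 2+\tfrac{1}{6}\log 3$, well within the stated bound.

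The hard case will be the branch $a^4\mid N$ with $c$ special and with the $m$-type ``missing prime'' turning out to be $a$ itself: the single-step improvement $a^{5}b^{2}c\le 2N$ combined with the standard inequalities yields only exponent $44/75$, not $17/30$. To close this branch I will iterate the $m$-type argument four times (using that Nielsen guarantees enough small primes to host four additional components coprime to $abc$) to obtain $a^{8}b^{2}c\le 2^{4}N$, after which the LP returns exactly the exponent $17/30$. To fit inside the stated constant $2^{7/20}3^{13/60}$ I will use the sharper bound $q^{s}>(2/3)a$, coming from $\sigma(q^{s})<(3/2)q^{s}$ for an odd prime $q$, in place of the cruder $q^{s}>a/2$, so that the accumulated factor is $(3/2)^{4}$ rather than $2^{4}$. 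All remaining subcases close with at most one application of the $m$-type argument, and a routine LP check confirms that in each of them the resulting exponent is strictly below $17/30$.
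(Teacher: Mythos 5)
Your overall framework (case split on the special prime, $m$-type arguments, then a dual LP) is the paper's framework, and your easy subcases are fine. But the branch you flag as ``hard'' is where the proposal breaks, and for an avoidable reason: your claim that $a^5b^2c\le 2N$ together with the standard inequalities yields only the exponent $44/75$ is a miscalculation. The value $44/75$ is what you get if you omit the bound $bc<6^{1/4}N^{1/2}$ from the LP (or misread it as a bound on $ab$). Including it in its correct logarithmic form $2\beta+2\gamma\le\log N+\tfrac12\log 6$, the combination $\tfrac{1}{15}(3\gamma\le\log N+\log 3)+\tfrac{3}{10}(2\beta+2\gamma\le\log N+\tfrac12\log 6)+\tfrac15(5\alpha+2\beta+\gamma\le\log N+\log 2)$ gives $\alpha+\beta+\gamma\le\tfrac{17}{30}\log N+\tfrac{7}{20}\log 2+\tfrac{13}{60}\log 3$ exactly --- which is the paper's entire proof. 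The single-step inequality $a^5b^2c<2N$ (obtained in every subcase from $a^4b^2c\mid N$ or a comparable divisibility plus one $m$-type step) is already sufficient; no iteration is needed.

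The machinery you invent to repair the phantom gap does not work. First, the iterated $m$-type argument is not justified: after adjoining the new component $q^s$ to $M$, the deficiency argument applied to $q^sM$ only guarantees that \emph{some} component of $q^sM$ fails to divide $\sigma(q^sM)$, and that component may be $q^s$ itself; in that case the next component produced is only bounded below in terms of $q$, not $a$, so $a^{e_a+k}b^{e_b}c^{e_c}\le 2^kN$ does not follow. Second, even granting $a^8b^2c\le(3/2)^4N$, the LP you describe (using the Acquaah--Konyagin bound and $5\beta\le\log N+\log 2$) does give the exponent $\tfrac{17}{30}$ but with constant $\tfrac{19}{24}\log 3-\tfrac{7}{20}\log 2\approx 0.627$, which exceeds the allowed $\tfrac{7}{20}\log 2+\tfrac{13}{60}\log 3\approx 0.481$; the ``routine LP check'' you defer would fail in this branch. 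So as written the proposal does not establish the stated inequality; replacing the hard branch with the three-term combination above repairs it and collapses the whole argument to the paper's.
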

\begin{proof} Assume that at least one of $a^4|N$, $b^4|N$ or $c^4|N$. By the same logic as in the proof of Theorem \ref{Easy abc bound}, we must have
$$a^5b^2c < 2N.$$ We then have
\begin{equation}\label{a5 b2 c log form} 5\alpha + 2\beta + \gamma < \log N + \log 2 . 
\end{equation}
We take 
 $\frac{1}{15}{\bf{\ref{AK log form}}}  + \frac{3}{10}{\bf{\ref{log form of earlier bc inequality}}}$  $+ \frac{1}{5}{\bf{\ref{a5 b2 c log form}}}$, which yields $$\alpha + \beta + \gamma \leq \frac{17}{30}\log N + \frac{13}{60}\log 3 + \frac{7}{20}\log 2, $$ which yields the desired inequality. 

\end{proof}

\begin{proposition} Assume that $a^2|N$, $b^2|N$ and $c^2|N$. Then $abc < N^{1/2}$
\end{proposition}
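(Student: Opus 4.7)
The plan is to exploit the coprimality of distinct prime powers together with the well-known fact that every odd perfect number has at least four (in fact at least ten, by Nielsen) distinct prime factors. Under the hypothesis, the three prime powers $a^2$, $b^2$, and $c^2$ are pairwise coprime, so their product $a^2 b^2 c^2$ divides $N$.

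First I would observe that since $a < b < c$ are the three largest prime divisors of $N$, there must exist at least one additional prime $p$ with $p \mid N$ and $p \notin \{a,b,c\}$ (in fact, at least seven such primes by Nielsen, but I only need one). Therefore $a^2 b^2 c^2$ is a proper divisor of $N$, which already forces $a^2 b^2 c^2 < N$. Taking square roots yields
$$abc < N^{1/2},$$
as claimed.

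There is no real obstacle here; the only point to be careful about is that the inequality is strict, and this is guaranteed by the existence of a fourth prime divisor. One could alternatively invoke the slightly stronger fact that any proper divisor of a perfect number is deficient to get $\sigma(a^2b^2c^2) < 2a^2b^2c^2$, but this is unnecessary — straightforward divisibility suffices. Thus the proof is essentially a one-line consequence of the hypothesis and the basic structure theorem for odd perfect numbers.
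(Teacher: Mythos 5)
Your argument is correct and is essentially the paper's own proof: the paper likewise just observes that $a^2b^2c^2 < N$ and takes square roots, and your justification of strictness via the existence of a fourth prime divisor is exactly the right (implicit) supporting fact. Nothing further is needed.
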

\begin{proof}  This lemma essentially amounts to just observing that $a^2b^2c^2 < N$ and then taking the square root of both sides. 

% Try to tighten this Prop later if have time. We have two cases to consider. In one case we have $a^2||N$,  $b^2||N$ and $c^2||N$. In the other at least case one of $a$, $b$, or $c$ is raised to a higher power. We'll assume for now that at least one of $a$, $b$ or $c$ is raised to a higher power. We then have  by the same $m$-component argument that $a^5b^2c^2 < N$. 
\end{proof}

Strictly speaking, we do not need the next result, but it may be of interest to see how far we can push the above.

\begin{proposition} Assume that $a^2||N$, $b^2||N$ and $c^2||N$. Then $$abc < 2^{\frac{1}{3}}3^{\frac{1}{18}}N^{\frac{17}{36}}.$$ 
\end{proposition}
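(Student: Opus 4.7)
The plan is to refine the trivial bound $abc < N^{1/2}$ (which follows from $a^2 b^2 c^2 \mid N$) by applying an $m$-type argument to the proper divisor $M = a^2 b^2 c^2$ of $N$ and then combining the resulting inequality with the earlier inequalities (A), (B), (C) by linear programming, mirroring the strategy used in Theorem~\ref{Easy abc bound} and Theorem~\ref{abc < N to the three fifths}.

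Since $N$ has at least four distinct prime divisors, $M = a^2 b^2 c^2$ is a proper, hence deficient, divisor of $N$. As in Theorem~\ref{Generalbound}, deficiency forces some $p \in \{a,b,c\}$ with $p^2 \nmid \sigma(M)$, so $p$ must divide $\sigma(p_\ell^{a_\ell})$ for some prime-power component $p_\ell^{a_\ell}$ of $N$ coprime to $abc$. Using $\sigma(p_\ell^{a_\ell}) < \tfrac{3}{2}\,p_\ell^{a_\ell}$ gives $p_\ell^{a_\ell} > \tfrac{2p}{3} \geq \tfrac{2a}{3}$, and then $p_\ell^{a_\ell} M \mid N$ yields
\[ a^3 b^2 c^2 \leq \tfrac{3}{2}\,N, \]
equivalently $3\alpha + 2\beta + 2\gamma \leq \log N + \log 3 - \log 2$. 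Furthermore, since $c^2 \| N$ we have $\gcd(\sigma(c^2),2c) = 1$, and combining this with $\sigma(c^2)\sigma(N/c^2) = 2N$ shows $\sigma(c^2) \mid N/c^2$; this gives $c^4 \leq N$, i.e., $4\gamma \leq \log N$, and analogously $4\beta \leq \log N$ and $4\alpha \leq \log N$.

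To finish, I would search, via the dual LP as in Theorem~\ref{Easy abc bound}, for a non-negative linear combination of the above new inequalities together with (A), (B), (C) and the trivial inequalities $\alpha - \beta \leq 0$, $\beta - \gamma \leq 0$, producing $\alpha+\beta+\gamma$ on the left and $\tfrac{17}{36}\log N + \tfrac{1}{3}\log 2 + \tfrac{1}{18}\log 3$ on the right. The main obstacle is that, using only these inequalities, the optimal $\log N$-coefficient I can achieve appears to be slightly larger than $\tfrac{17}{36}$; closing the remaining gap will likely require a sub-case analysis based on whether some $p \in \{a,b,c\}$ fails to divide $\sigma(M) = (a^2+a+1)(b^2+b+1)(c^2+c+1)$. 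In the favorable sub-case where such a $p$ exists, one has $p^2 \mid \sigma(N/M)$ rather than merely $p \mid \sigma(N/M)$, and the $m$-argument upgrades to $a^4 b^2 c^2 \leq \tfrac{3}{2}N$ (or a $b$- or $c$-analogue), which fed into the LP is strong enough to reach $\tfrac{17}{36}$. In the complementary sub-case every prime $p \in \{a,b,c\}$ divides a $\sigma$-factor of another, producing $\sigma_{?,2}$-type divisibility relations that can be controlled using Lemma~\ref{quasisolutions only come from quasisolution equation} and the size estimates on quasichains.
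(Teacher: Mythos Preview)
Your overall plan---an $m$-type argument plus LP, followed by a case split on divisibility relations among $\sigma(a^2),\sigma(b^2),\sigma(c^2)$---is the right shape, but there is a genuine gap in your ``favorable sub-case.'' You claim that if some $p\in\{a,b,c\}$ has $p\nmid\sigma(M)$, then the upgraded inequality $p^2M\le CN$ (e.g.\ $4\alpha+2\beta+2\gamma\le\log N+C$ when $p=a$) fed into the LP reaches $\tfrac{17}{36}$. This is true for $p=b$ and $p=c$, but \emph{false for $p=a$}: with $4\alpha+2\beta+2\gamma\le\log N$ together with all of $3\gamma\le\log N$, $5\beta\le\log N$, $2\beta+2\gamma\le\log N$, $4\alpha\le\log N$, $4\beta\le\log N$, $4\gamma\le\log N$, $3\alpha+2\beta+2\gamma\le\log N$ and $\alpha\le\beta\le\gamma$, the LP optimum for $\alpha+\beta+\gamma$ is still $\tfrac12\log N$, not $\tfrac{17}{36}\log N$. (Take $\alpha\to 0$, $\beta+\gamma=\tfrac12$ with $3\gamma=1$ and $2\beta+2\gamma=1$; all constraints are satisfied.) So your case split does not close.

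The paper's proof pivots differently: rather than asking which $p$ fails to divide $\sigma(M)$, it asks whether $c\mid\sigma(b^2)$. If $c\nmid\sigma(b^2)$ then $\sigma(b^2)$ is coprime to $bc$, so $b^2\sigma(b^2)c^2\mid N$, giving $4\beta+2\gamma\le\log N$; combining this with $3\alpha+2\beta+2\gamma\le\log N+\log 2$ and $3\gamma\le\log N+\log 3$ in weights $\tfrac1{12},\tfrac13,\tfrac1{18}$ yields exactly $\tfrac{17}{36}$. The key point is that this inequality has no $\alpha$ on the left, which is precisely what your $a$-version $4\alpha+2\beta+2\gamma$ lacks. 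The remaining cases ($c\mid\sigma(b^2)$, then whether $b\mid\sigma(c^2)$, etc.) are then handled using the $\sigma_{2,2}$ lemmas (Lemmas~\ref{squared divisor in sigma 2,2 pair} and~\ref{No linked sigma22 pairs}) in a concrete sequence of sub-cases, each producing an inequality at least as strong. Your description of the complementary sub-case (``producing $\sigma_{?,2}$-type divisibility relations that can be controlled'') gestures at this but is too vague to count as a proof; the paper's argument shows that several distinct sub-sub-cases need to be treated, each with its own inequality.
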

\begin{proof} Assume that $a^2||N$, $b^2||N$ and $c^2||N$. We can use an $m$-type argument to obtain that
$$a^3b^2c^2 < 2N$$ which becomes 
\begin{equation}\label{3alpha + 2beta + 2gamma < log N I} 3\alpha + 2\beta + 2\gamma < \log N + \log 2. 
\end{equation}

Note that $c^2 \not |\sigma(b^2)$.  
Note also that we cannot have $c^2|\sigma(a^2)$ nor can we have $b^2|\sigma(a^2)$ nor $bc|\sigma(a^2)$. 
 If we have $c \not|\sigma(b^2)$, then we have that $$b^4c^2 < (b^2\sigma(b^2)c^2) |N $$ and therefore\begin{equation}\label{4 beta + 2 gamma} 4\beta + 2\gamma \leq \log N. \end{equation}
 
 We then take  $\frac{1}{18}{\bf{\ref{AK log form}}}+ \frac{1}{3}{\bf{\ref{3alpha + 2beta + 2gamma < log N I}}}+\frac{1}{12}{\bf{\ref{4 beta + 2 gamma} }} $ which yields $$\alpha + \beta + \gamma < \frac{17}{36}\log N + \frac{1}{18}\log 3 + \frac{1}{3}\log 2,$$ which is equivalent to the desired inequality.

% This next inequality is valid but doesn't tighten anything right not. If try to further tighten this result then should at least try throwing this one in if do others in this case. 
% Since we cannot have $c^2|\sigma(a^2)$, and the same for $bc|\sigma(a^2)$ and $b^2|\sigma(a^2)$ we have either 
% $$\sigma(a^2)b^2\sigma(b^2)c|N$$ 
% or $$\sigma(a^2)b\sigma(b^2)c^2|N.$$ 

% Either of these implies that
% $$a^2b^4c < N $$ or equivalently
% \begin{equation}\label{a2 b4 c log form} 2\alpha + 4\beta + \gamma < \log N. 
% \end{equation}

Thus, we may assume that we are in the situation where $c|\sigma(a^2)$ and $b \not |\sigma(a^2)$. Since we cannot have $c^2 \not|\sigma(a^2)$ we can then use an $m$-component argument to get that
$$a^2b^2c^3 < 2N$$
or equivalently, that 
\begin{equation}\label{2alpha 2beta 3gamma I} 2\alpha + 2\beta + 3\gamma < \log N + \log 2. \end{equation}

We then take as our sum $\frac{1}{7}{\bf{\ref{alpha less than beta}}} + \frac{2}{7}{\bf{\ref{beta less than gamma}}}+  \frac{3}{7}{\bf{\ref{2alpha 2beta 3gamma I}}}$, which yields

$$\alpha + \beta + \gamma \leq \frac{3}{7}\log N + \frac{3}{7}\log 2.$$

This is the same as

$$abc < (2N)^{\frac{3}{7}}, $$ which implies the desired inequality. 

We have completely handled the situation where $c \not|\sigma(b^2)$. We  may now assume that $c|\sigma(b^2)$. Again, note that we must have $c^2 \not|\sigma(b^2)$. Note that if we have $b\not| \sigma(c^2)$, then we have
$$b^3c^3<  (b\sigma(c^2)\sigma(b^2)c)|N,$$ which implies

\begin{equation}\label{3beta 3gamma < log N}  3\beta + 3\gamma \leq \log N. 
\end{equation}

We take then as our sum $\frac{1}{3}{\bf{\ref{3alpha + 2beta + 2gamma < log N I}}} $ + $\frac{1}{9}{\bf{\ref{3beta 3gamma < log N}}}$ which yields

$$\alpha + \beta + \gamma < \frac{4}{9}\log N + \frac{1}{3}\log 2$$ which implies the desired inequality. 

We may thus assume that $b|\sigma(c^2)$. So $b$ and $c$ form a $\sigma_{2,2}$ pair.  By Lemma \ref{squared divisor in sigma 2,2 pair}, we have $b^2\not|\sigma(c^2)$, and so
$$(b\sigma(b^2)c\sigma(c^2))|N$$ 

Note that if $a|\sigma(c^2)$, then, since $b$ and $c$ form a $\sigma_{2,2}$ pair, we cannot have $a$ and $c$ be a $\sigma_{2,2}$ pair since if they were, we'd have $a=3$ by Lemma \ref{No linked sigma22 pairs}. But we must have $a>100$ due to Iannucci's result, so this is impossible.\footnote{An alternate way of reaching a contradiction here is to note that if the third largest prime factor were $3$, then $N$ would only have three distinct prime factors.} Thus, in this case we may assume that $c\not|\sigma(a^2)$. An $m$-type argument gives us again that
$$a^2b^2c^3 < 2N$$ and our logic then goes through as before to obtain the result that $$abc < (2N)^{\frac{3}{7}} .$$ We may thus assume that $a \not|\sigma(c^2)$.

Now, consider what $a$ may divide. If $(a,\sigma(b^2)\sigma(c^2)$)=1 then we have
$$a^2b\sigma(b^2)c\sigma(c^2)|N,$$ which yields that

\begin{equation}\label{2alpha 3beta 3gamma < log N} 2\alpha + 3\beta + 3\gamma < \log N .\end{equation}

We may take as our sum $\frac{1}{8}{\bf{\ref{beta less than gamma}}} $ $\frac{1}{4}{\bf{\ref{alpha less than beta}}} $ $+ \frac{3}{8}{\bf{\ref{2alpha 3beta 3gamma < log N}}}$ to get that

$$\alpha + \beta + \gamma < \frac{17}{36}\log N.$$

We then have that

$$abc  < N^{\frac{17}{36}}.$$

We may thus assume that either $a|\sigma(b^2)$ or $a|\sigma(c^2)$. We will only look at the first case (the second case is nearly identical).  If this is true, then by Lemma \ref{No linked sigma22 pairs}, we have that $b\not| \sigma(a^2)$ and by Lemma \ref{squared divisor in sigma 2,2 pair} that $b^2 \not |\sigma(c^2)$, so we may make an $m$-type argument to obtain that
$$a^2b^3c^2 < 2N,$$ which we have already seen is an inequality strong enough to obtain our result. 
\end{proof}

Note that if we knew Conjecture \ref{square free conjecture}, then the above proposition could  very likely be tightened. 

We are now in a position where the only remaining cases to be considered are one of $a$, $b$ or $c$ is raised to the first power and the other two are raised to the second. 
\begin{proposition} If $a||N$, $b^2||N$ and $c^2||N$, then 
$$abc < N^{\frac{1}{2}}.$$
\end{proposition}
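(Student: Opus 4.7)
The plan is to write $N = ab^2c^2 M$ with $M = N/(ab^2c^2)$ coprime to $abc$, and to prove $M > a$; since $(abc)^2 = a^2b^2c^2 < ab^2c^2 \cdot M = N$ would then follow, this is equivalent to the claim.

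First I would pin down two guaranteed divisors of $M$. Since $a\|N$, $a$ must be Euler's special prime, so $a \equiv 1 \pmod 4$ and $(a+1)/2$ is an odd integer; from $\sigma(a) = a+1 \mid 2N$ we get $(a+1)/2 \mid N$, and since every prime factor of $(a+1)/2$ is less than $a$, it lies in the support of $M$, so $(a+1)/2 \mid M$. Next I would compare $v_c$-valuations on both sides of
$$\sigma(N) = (a+1)(b^2+b+1)(c^2+c+1)\sigma(M) = 2N.$$
Using $c > a+1$ and $\gcd(c, c^2+c+1) = 1$, this gives $v_c(b^2+b+1) + v_c(\sigma(M)) = 2$, and $c \geq b+2$ forces $c^2 > b^2+b+1$, so $v_c(b^2+b+1) \in \{0,1\}$, splitting into two cases.

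If $c \nmid b^2+b+1$, then $v_c(\sigma(M)) = 2$. Either one component $p^{e_p}$ of $M$ has $c^2 \mid \sigma(p^{e_p})$, whence $p^{e_p} > \tfrac{2}{3}c^2 > a$; or two distinct components $p^{e_p}, q^{e_q}$ of $M$ each contribute a factor of $c$, whence $M \geq p^{e_p}q^{e_q} > (\tfrac{2}{3}c)^2 > a$. If instead $c \mid b^2+b+1$, then $v_c(\sigma(M)) = 1$, so exactly one component $p^{e_p}$ of $M$ has $c \mid \sigma(p^{e_p})$, and the bound $\sigma(p^{e_p}) < \tfrac{3}{2}p^{e_p}$ only yields $p^{e_p} > \tfrac{2}{3}c > \tfrac{2}{3}a$. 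Iannucci's bound $a > 100$ gives $\tfrac{2}{3}a > (a+1)/2$, so $p^{e_p} > (a+1)/2$ and hence $p^{e_p} \nmid (a+1)/2$; writing $(a+1)/2 = p^f u$ with $\gcd(u,p) = 1$ and $f < e_p$, I conclude
$$M \geq \mathrm{lcm}\bigl(p^{e_p},(a+1)/2\bigr) = p^{e_p - f}\cdot (a+1)/2 \geq p\cdot (a+1)/2 \geq 3(a+1)/2 > a.$$
In every case $M > a$, which is what we wanted.

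The hard part is the final case: when $c$ is already ``absorbed'' by $\sigma(b^2)$, only a single component of $M$ supplies $c$ to $\sigma(M)$, and its guaranteed size $\tfrac{2}{3}c$ is barely larger than $2a/3$, not by itself enough to beat $a$. The resolution is to pair this single component against the structurally forced divisor $(a+1)/2 \mid M$ coming from $a$ being the special prime, and to use Iannucci's size restriction $a > 100$ to ensure the prime power $p^{e_p}$ cannot be hidden inside $(a+1)/2$.
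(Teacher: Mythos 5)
Your proof is correct, but it takes a genuinely different and more elaborate route than the paper's. The paper's argument is a one\-liner: since every odd prime factor of $\sigma(a)=a+1$ is at most $\frac{a+1}{2}<a<b<c$, the odd part of $\sigma(a)$ is coprime to $abc$ and divides $N$, so $a\cdot\frac{a+1}{2}\cdot b^2c^2\mid N$. (As printed the paper actually asserts $a\sigma(a)b^2c^2\mid N$, which cannot be literally true since $\sigma(a)=a+1$ is even and $N$ is odd; what the argument honestly yields is $a^2b^2c^2<a(a+1)b^2c^2\le 2N$, i.e.\ $abc<(2N)^{1/2}$ rather than the stated $N^{1/2}$.) You keep the same structural ingredient --- the forced divisor $\frac{a+1}{2}\mid M$ --- but you go further: by computing $v_c$ of $\sigma(N)=2N$ across the components you force either a component of $M$ of size at least $\frac{2}{3}c^2$ (or two of size $\frac{2}{3}c$ each), or, in the delicate case $c\mid\sigma(b^2)$, a single component exceeding $\frac{a+1}{2}$ which you then combine with $\frac{a+1}{2}$ via an lcm to push $M$ above $a$. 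All the steps check out (note that $a>3$ already gives $\frac{2}{3}a>\frac{a+1}{2}$, so Iannucci's bound is overkill there). The payoff of your extra work is that you actually establish the clean exponent $N^{1/2}$ claimed in the statement, which the paper's argument, read strictly, only delivers up to a factor of $\sqrt{2}$; since the paper only ever uses this proposition en route to $abc<(2N)^{3/5}$, the discrepancy is harmless there, but your version is the one that matches the proposition as stated.
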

\begin{proof}  Assume that $a||N$, $b^2||N$ and $c^2||N$. Since $\frac{a+1}{2} < a < b < c$, we have that that $b \not|\sigma(a)$ and $c\not|\sigma(a)$. Hence,  $$a^2b^2c^2 < (a\sigma(a)b^2c^2)|N,$$ from which the result follows.
\end{proof}

\begin{proposition} Suppose that $a^2||N$, $b||N$ and $c^2||N$. Then we have $$abc < 2N^{\frac{11}{20}}.$$ 

\end{proposition}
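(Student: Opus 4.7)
The plan is to form $M = a^2 b c^2$, a divisor of $N$, to obtain the uniform bound $a^3 b c^2 < 2N$ via the basic $m$-type argument, and then to refine this through an analysis of the $c$-adic valuation of both sides of $\sigma(N) = 2N$; the resulting inequalities are then combined with \ref{b bound from Z1 log form} via appropriately chosen linear combinations. Since an odd perfect number has more than three distinct prime factors, $M$ is a proper divisor of $N$ and hence deficient, so the $m$-type argument from the proof of Theorem \ref{Generalbound} produces a prime power $p_\ell^{a_\ell} > a/2$ dividing $N/M$, which yields $a^3 b c^2 < 2N$.

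To refine, I track the exponent of $c$ in $\sigma(M) \sigma(L) = 2N$, where $L = N/M$. By the simple observations preceding this section, $c \nmid \sigma(b) = b+1$ (because $c \geq b+2$) and $c^2 \nmid \sigma(a^2) = a^2+a+1$ (because $c \geq a+2$ gives $c^2 > (a+2)^2 > a^2+a+1$), while $c \nmid \sigma(c^2)$ trivially. Hence $v_c(\sigma(M)) \in \{0,1\}$, with value $1$ exactly when $c \mid a^2+a+1$, and correspondingly $v_c(\sigma(L)) = 2 - v_c(\sigma(M))$. I split into cases. In Case A ($c \nmid a^2+a+1$), $v_c(\sigma(L)) = 2$ forces either some single component $p_\ell^{a_\ell}$ of $L$ to satisfy $c^2 \mid \sigma(p_\ell^{a_\ell})$ (hence $p_\ell^{a_\ell} > c^2/2$ by the deficiency of $p_\ell^{a_\ell}$), or two distinct components of $L$ to each satisfy $c \mid \sigma(\cdot)$ (hence each exceeds $c/2$ and their product exceeds $c^2/4$). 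In either sub-case, multiplying $M$ by the contributing component(s) of $L$ (which are coprime to $M$) gives a divisor of $N$, so $(c^2/4)\cdot a^2 b c^2 < N$, i.e., $a^2 b c^4 < 4N$. In Case B ($c \mid a^2+a+1$), $v_c(\sigma(L)) = 1$ yields a single component $p_\ell^{a_\ell} > c/2$, and hence $a^2 b c^3 < 2N$.

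Finally I take linear combinations with \ref{b bound from Z1 log form}. In Case A, the combination $\tfrac{1}{8}\cdot\textbf{(\ref{b bound from Z1 log form})} + \tfrac{1}{4}\cdot(a^3 b c^2 < 2N) + \tfrac{1}{8}\cdot(a^2 b c^4 < 4N)$ (all in log form) gives $\alpha+\beta+\gamma \leq \tfrac{1}{2}\log N + \tfrac{5}{8}\log 2$, so $abc \leq 2^{5/8} N^{1/2} < 2N^{11/20}$ since $2^{5/8} < 2$ and $1/2 \leq 11/20$. In Case B, the combination $\tfrac{3}{25}\cdot\textbf{(\ref{b bound from Z1 log form})} + \tfrac{1}{5}\cdot(a^3 b c^2 < 2N) + \tfrac{1}{5}\cdot(a^2 b c^3 < 2N)$ gives $\alpha+\beta+\gamma \leq \tfrac{13}{25}\log(2N)$, so $abc \leq (2N)^{13/25} < 2N^{11/20}$ since $13/25 < 11/20$ and $2^{13/25} < 2$. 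The main obstacle is identifying a refinement of $a^3 b c^2 < 2N$ strong enough to bring the weighted coefficient on $\log N$ down to $11/20$ (the basic $m$-type bound alone only gives $26/45$); the $c$-adic analysis above provides exactly the needed extra inequality in each case.
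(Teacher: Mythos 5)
Your proof is correct, but it takes a genuinely different route from the paper's. The paper argues by a case analysis on the divisibility pattern among $\sigma(b)=b+1$, $\sigma(a^2)$, and $\sigma(c^2)$ (does $a^2\mid\sigma(b)$? does $c\mid\sigma(a^2)$? does $a\mid\sigma(c^2)$?), invoking Lemma \ref{p|q+1 and q|p^2+p+1 lemma} and Lemma \ref{PQR lemma} to rule out certain configurations; in every branch it exhibits a single divisor of $2N$ of size at least $a^2b^2c^2$ (or $a^3bc^3$, which dominates $(abc)^2$ since $ac>b$), so it actually obtains $abc<(2N)^{1/2}$, noticeably stronger than the stated $2N^{11/20}$, and needs no linear combinations at all in this proposition. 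You instead do a $c$-adic valuation count in $\sigma(M)\sigma(L)=2N$ with $M=a^2bc^2$, which requires only the elementary size observations ($c\nmid b+1$, $c^2\nmid a^2+a+1$, $c\nmid\sigma(c^2)$) and completely avoids the $\sigma_{2,2}$-pair machinery; the price is a weaker worst-case bound of $(2N)^{13/25}$ in your Case B, which is still comfortably below $2N^{11/20}$. I checked your valuation bookkeeping (the two sub-cases of $v_c(\sigma(L))=2$, the coprimality of the contributing components to $M$, and the resulting inequalities $a^2bc^4<4N$ and $a^2bc^3<2N$) and both linear combinations (coefficients on $\alpha,\beta,\gamma$ sum to $1$ each, giving $\tfrac12\log N+\tfrac58\log 2$ and $\tfrac{13}{25}\log(2N)$ respectively); all of it is sound, and your observation that the bare $m$-type bound only yields exponent $26/45$ correctly identifies why the refinement is needed on your route.
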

\begin{proof} 

% If we have  $a^2|\sigma(b)$ then since $b< 2^{\frac{1}{5}}N^{1/5}$ % we have that $a \leq 2N^{\frac{1}{10}}.$  If this is the case then % we use Equation \ref{bc inequality from previous paper} to % % conclude that $$abc < 6^{\frac{1}{4}}N^\frac{1}{2}N^{\frac{1}{10}} < % 4N^{\frac{3}{5}}. $$ 
% JZ comments This is the weakest spot. Obvious one to add is
%that have a^2\sigma(a^2)bc|N, but still only 3/5 power even with that and $a^2 < Constant b$. 

First, note that $c\not|\sigma(b)$, since $c> b > \frac{b+1}{2}$. 

We will first consider the situation where $a^2|\sigma(b)$. In that situation  we have $a^2 < \frac{b+1}{2}<b$, and thus we also have $b\not|\sigma(a^2)$.  Note that we also have $a^2 +a +1 < b < c$ and so we have $c \not|\sigma(a^2)$.  We then have  

$$a^2b^2c^2< (\sigma(a^2)b\sigma(b)c^2)|(2N).$$ We then have 
$$abc < (2N)^{\frac{1}{2}}.$$

We may thus assume that $a^2\not|\sigma(b)$. 

If $a \not|\sigma(b)$, then we have $$a^2b^2c^2 < (a^2b\sigma(b)c^2)|(2N),$$ and hence we get the same bound as before. That is, $$abc < (2N)^{1/2}. $$ We may thus assume that $a||\sigma(b)$.

By Lemma \ref{p|q+1 and q|p^2+p+1 lemma}, we have that $b\not|\sigma(a^2)$. We also have that $c^2 \not|\sigma(a^2)$ (since this would force $c< a$). We then have 
$$(a^2\sigma(a^2)bc)|N.$$ Since $c \not|\sigma(b)$ we also have 
$$(a\sigma(a^2)b\sigma(b)c)|(2N). $$

%yields
%$$abc < 2N^{\frac{3}{5}}. $$ 
%We can tighten this further with a little work. 

Suppose that $c\not|\sigma(a^2)$. In that case, we have 
$$(a\sigma(a^2)b\sigma(b)c^2)|(2N),$$ which yields
$$abc < 2N^{\frac{1}{2}} . $$

We may thus assume that $c|\sigma(a^2)$. Now, suppose that $a \not |\sigma(c^2)$. Then we have $$(a\sigma(a^2)\sigma(b)c\sigma(c^2))|(2N). $$ This implies that
$$a^3bc^3 < 2N,$$

and again we have $$abc < 2N^{\frac{1}{2}}.$$

Note that with a little work we can actually tighten this last case slightly from $a^3bc^3 <2N$ to get 

$$abc < 2N^{\frac{7}{15}}$$ but we will not need that here. 

We may now assume that $a|\sigma(c^2)$, and so $a$ and $c$ form a $\sigma_{2,2}$ pair. Then, since the special prime must be $1$ (mod 4), we may invoke Lemma \ref{PQR lemma} to conclude that $b\not |\sigma(c^2)$ since otherwise $c$ and $b$ would form a $\sigma_{2,2}$ pair.  We then obtain
$$(a\sigma(a^2)bc\sigma(c^2))|N,$$ which again
yields that $$a^3bc^3 < 2N$$ and the logic is again identical. 
\end{proof}

We now have our last situation. (Note that the below proposition is the weakest inequality, and so any improvement in the main theorem would come from improving this proposition.) 

\begin{proposition} Suppose that $a^2||N$, $b^2||N$ and $c||N$. Then we have $$abc < (2N)^{\frac{3}{5}}.$$ 
\end{proposition}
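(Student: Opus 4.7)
My plan is to case-split on which factor of $\sigma(N) = \sigma(M)\sigma(a^2)\sigma(b^2)\sigma(c) = 2N$ provides the prime $c$, where $M = N/(a^2b^2c)$ denotes the ``small-prime'' part of $N$.  Since $c||N$ is odd and $c \not|\,\sigma(c) = c+1$, exactly one of $\sigma(M), \sigma(a^2), \sigma(b^2)$ is divisible by $c$, and only to the first power.  If $c\,|\,\sigma(M)$, the $m$-type argument applied to $a^2b^2c$ supplies a component $p^e||M$ with $p^e > \frac{2}{3}c$, giving $a^2 b^2 c^2 < \frac{3}{2}N$ and $abc < (3/2)^{1/2}\,N^{1/2} < (2N)^{3/5}$ immediately.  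The remaining two cases, $c\,|\,\sigma(a^2)$ and $c\,|\,\sigma(b^2)$, are the substantive ones, and each supplies a crucial structural size bound: $c \leq a^2+a+1 < (a+1)^2$ in the first, and $c < (b+1)^2$ in the second.  Without such a bound, the best combination of AK, the $bc$ and $b$ bounds, and the basic $m$-type inequality $a^3 b^2 c < \frac{3}{2}N$ yields only exponent $\frac{28}{45} > \frac{3}{5}$.

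In Case $c\,|\,\sigma(a^2)$: writing $\sigma(a^2) = cs$ with $s < a < b$ forces $b \not|\,\sigma(a^2)$, so $v_b(\sigma(M)) + v_b(\sigma(c)) = 2$, and I subdivide on whether $b\,|\,c+1$.  If $b \not|\,c+1$, then $b^2\,|\,\sigma(M)$, and the $m$-type (allowing either a single $p^e||M$ with $b^2\,|\,\sigma(p^e)$, or two distinct components each contributing one $b$-factor) yields $a^2 b^4 c \leq \frac{9}{4}N$.  If $b\,|\,c+1$, then $b^2\,|\,c+1$ would give $b^2 \leq c+1 \leq (a+1)^2$, hence $b \leq a$, contradicting $b > a$; thus $b||c+1$, $v_b(\sigma(M)) = 1$, and the $m$-type yields $a^2 b^3 c < \frac{3}{2}N$.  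In Case $c\,|\,\sigma(b^2)$: Lemma~\ref{p|q+1 and q|p^2+p+1 lemma} forbids $b\,|\,c+1$ (which would form a $\sigma_{1,2}$ pair), so $v_b(\sigma(c)) = 0$; a size argument ($b^2 \leq a^2+a+1 < (a+1)^2$ would force $b \leq a$) forbids $b^2\,|\,\sigma(a^2)$, so $v_b(\sigma(a^2)) \in \{0,1\}$.  When $v_b(\sigma(a^2)) = 0$, $b^2\,|\,\sigma(M)$ gives $a^2 b^4 c \leq \frac{9}{4}N$; when $v_b(\sigma(a^2)) = 1$, the extra bound $b < (a+1)^2$ holds and $b\,|\,\sigma(M)$ yields $a^2 b^3 c < \frac{3}{2}N$.

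Each of the four remaining subcases is closed by a short linear-programming combination of the derived power inequality with the relevant structural bound ($\gamma < 2\alpha$ in Case $c\,|\,\sigma(a^2)$, $\gamma < 2\beta$ in Case $c\,|\,\sigma(b^2)$, together with $\beta < 2\alpha$ in the last subcase) and the trivial $\alpha \leq \beta$, where $\alpha=\log a$, $\beta=\log b$, $\gamma=\log c$.  The resulting upper bounds on $\alpha+\beta+\gamma$ are $\frac{1}{2}\log\frac{9N}{4}$, $\frac{4}{7}\log\frac{3N}{2}$, $\frac{1}{2}\log\frac{9N}{4}$, and $\frac{7}{12}\log\frac{3N}{2}$, respectively; a direct comparison (using that an odd perfect $N$ is enormous) shows each is strictly less than $\frac{3}{5}\log(2N)$, yielding $abc < (2N)^{3/5}$.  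The main difficulty is recognizing that the structural bounds ``$c$ (or $b$) divides $\sigma$ of a smaller component'' are the missing ingredient --- they are exactly what the linear program needs to bring the exponent below $\frac{3}{5}$, after the case analysis is arranged so that Lemma~\ref{p|q+1 and q|p^2+p+1 lemma} and the elementary size comparisons close off the pathological sub-possibilities.
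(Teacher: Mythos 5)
Your proof is correct, but it is organized around a genuinely different decomposition from the paper's. The paper splits on which of $a,b$ divide $\sigma(c)$ and which of $a,c$ divide $\sigma(b^2)$, and its bottleneck case $ac\mid\sigma(b^2)$ is closed by combining $ac<2b^2$ with the external bound $b<(2N)^{1/5}$ from \cite{Zelinskysecondlargest}; that combination is exactly where the exponent $\tfrac{3}{5}$ comes from. You instead split on which factor of $\sigma(N)=\sigma(M)\sigma(a^2)\sigma(b^2)\sigma(c)$ supplies the single copy of $c$, and then track the valuation $v_b(\sigma(M))=2-v_b(\sigma(a^2))-v_b(\sigma(c))$ to force components of $M$ of size at least $\tfrac{2}{3}b$ or $\tfrac{4}{9}b^2$. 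This buys two things: you bypass the $\sigma_{2,2}$ machinery (Lemmas \ref{squared divisor in sigma 2,2 pair}, \ref{PQR lemma}, \ref{No linked sigma22 pairs}) entirely, invoking only Lemma \ref{p|q+1 and q|p^2+p+1 lemma} once to kill $b\mid c+1$ when $c\mid\sigma(b^2)$; and your four terminal exponents $\tfrac12,\tfrac47,\tfrac12,\tfrac{7}{12}$ are all strictly below $\tfrac35$, so in this configuration (which is the tight case of Theorem \ref{abc < N to the three fifths}) your argument actually gives roughly $abc<\left(\tfrac32\right)^{7/12}N^{7/12}$, stronger than the stated bound --- a point worth flagging, since it suggests the theorem's exponent could be lowered if the other configurations were re-examined. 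Two bookkeeping items to tighten before this is fully rigorous: state the structural bounds as $\gamma\le 2\alpha+\log\left(1+\tfrac1a+\tfrac1{a^2}\right)$ and $\gamma\le 2\beta+\log\left(1+\tfrac1b+\tfrac1{b^2}\right)$ rather than $\gamma<2\alpha$, $\gamma<2\beta$, and carry the resulting additive constants through the linear combinations (Iannucci's $a>100$ makes each correction less than $\tfrac{1}{12}\log(1.011)$, so the comparisons against $\tfrac35\log(2N)$ survive with room to spare, without even needing $N$ large); and note explicitly that the ``two components each contributing one factor of $b$'' alternative in the $v_b(\sigma(M))=2$ subcases still yields $M>\tfrac49b^2$, which you do allow for but should justify in the final write-up.
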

\begin{proof} Assume that $a^2||N$, $b^2||N$ and $c||N$. We have, from an $m$-type argument, that $$a^3b^2c < 2N,$$
 
 which becomes
 
 \begin{equation}\label{3alpha + 2beta + gamma < log N + log 2} 3\alpha + 2\beta + 3\gamma <  \log N + \log 2.
 \end{equation}

Note that if $(ab, \sigma(c))=1$, then we  have $$(a^2b^2c\sigma(c))|(2N),$$ in which case we immediately have 
$$a^2b^2c^2 < 2N $$ and hence
$$abc < (2N)^{\frac{1}{2}} < (2N)^{\frac{3}{5}}.$$
We may thus assume that either $a|\sigma(c)$ or $b|\sigma(c)$. 

Now, assume that $(ac, \sigma(b^2))=1$. In that case
we have $$a^2b^4c < (a^2b^2\sigma(b^2)c) | N.$$ 

We get then 

\begin{equation}\label{2alpha + 4beta + c} 2\alpha + 4\beta + c \leq \log N . \end{equation}

We take as our sum $\frac{2}{9}{\bf{\ref{AK log form}}} +$ $\frac{1}{3}{\bf{\ref{alpha less than beta}}} $ $+ \frac{1}{3}{\bf{\ref{2alpha + 4beta + c}}}$ which yields 

$$\alpha +\beta + \gamma < \frac{5}{9}\log N + \frac{2}{9}\log 3 .  $$

We immediately obtain 
$$abc <  3^{\frac{2}{9}}N^{\frac{5}{9}} < 2N^{\frac{3}{5}} .$$

We  may thus assume that we have $a|\sigma(b^2)$ or $c|\sigma(b^2)$

Let us consider the case where $ac|\sigma(b^2)$.  Then we have $ac \leq b^2 + b + 1 < 2b^2$. 

We thus have
\begin{equation}\label{alpha + gamma < 2beta} \alpha + \gamma - 2\beta <   \log 2.
\end{equation}

We may then take as our sum $\frac{3}{5}{\bf{\ref{b bound from Z1 log form}}} $  $+ {\bf{\ref{alpha + gamma < 2beta}}} $ which again yields

$$\alpha + \beta + \gamma < \frac{3}{5}\log N + \frac{3}{5}\log 2. $$

We may thus assume that we do not have both $a|\sigma(b^2)$ and $c|\sigma(b^2)$.  Let us first consider the case where $c|\sigma(b^2)$ and $a\not|\sigma(b^2)$. From Lemma \ref{p|q+1 and q|p^2+p+1 lemma} we have $b \not|\sigma(c)$. Note that we also have $b^2 \not|\sigma(a^2)$ and so we have that
$$a^2b^3c < (\sigma(a^2)b\sigma(b^2)\sigma(c))|(2N)$$ which 
we have seen is enough to obtain that $$abc < 2^{\frac{3}{5}}N^{\frac{3}{5}}.$$

Now, let us consider the case where $a|\sigma(b^2)$, and $c\not| \sigma(b^2)$. Assume for now that $a^2|\sigma(b^2)$. Then, by Lemma \ref{squared divisor in sigma 2,2 pair}, we have $b\not|\sigma(a^2)$. Now, if $c \not| \sigma(a^2)$, then we have
$$(a^2\sigma(a^2)b^2c) |N,$$ which yields $$abc < 2N^{\frac{7}{12}}.$$ So we may assume that $c|\sigma(a^2)$. We then have that $b^2 \not|\sigma(c)$, since it would force $b<a$. If $b\not|\sigma(c)$, then we would have 
$(a^2b^2c\sigma(c))|(2N)$ which yields $$abc < (2N)^{\frac{1}{2}}.$$  We may thus assume in this context that $b||\sigma(c)$. By an $m$-type argument we then have 
$$a^2\frac{1}{2}b^3c \leq N,$$ which again yields that $abc < N^{\frac{3}{5}}.$ We may thus assume that $a||\sigma(b^2)$. Then we have $$(ab^2\sigma(b^2)c)|(2N),$$ which again implies $$a^2b^3c < 2N$$ and so we are done with this case. 

Now, if $c|\sigma(b^2)$, then we also have that $b \not|\sigma(c)$ by Lemma \ref{p|q+1 and q|p^2+p+1 lemma}. We then have 
$$ac <\sigma(b^2) <  2b^2$$

and also 

$$b^2c\sigma(c) < N.$$

This last pair of inequalities is again strong enough to get our desired bound. 
\end{proof}

\section{Towards an improvement of bounds on $a$}

One would like to get a bound on $a$ of the form $a< CN^{\epsilon}$ for some $\epsilon < \frac{1}{6}$. This seems difficult. In this section, we will show that one can do so as long as one is not in the situation $a^2||N$, $b^2||N$, and $c||N$. As before, we will break the cases we care about into a variety of different propositions. 

\begin{proposition} If $p^4|N$ for some prime $p \in \{a,b,c\}$, then we have $a < N^\frac{1}{7}$. 
\end{proposition}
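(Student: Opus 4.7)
The plan is to use the same component-accounting method as in the proof of Theorem~\ref{Easy abc bound}, but pushed slightly further so as to extract a bound on $\alpha := \log a$ alone (with $\beta = \log b$, $\gamma = \log c$). Since $a,b,c$ are the three largest primes dividing $N$, their components $a^{a_{k-2}}$, $b^{a_{k-1}}$, $c^{a_k}$ are pairwise coprime and together divide $N$. At most one of these three primes can be the special prime, so at most one of the three exponents $a_{k-2},a_{k-1},a_k$ is odd; the other two, being even, are at least $2$.

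I would split into three cases according to whether $a^4\mid N$, $b^4\mid N$, or $c^4\mid N$, and in each case further split by which (if any) of $a,b,c$ is the special prime. In every sub-case one reads off a divisibility of the form $a^{e_a}b^{e_b}c^{e_c}\mid N$, and the claim to verify is simply that $e_a+e_b+e_c\geq 7$ always. For example, if $a^4\mid N$ and $b$ is the special prime, then $a^4bc^2\mid N$, giving $(e_a,e_b,e_c)=(4,1,2)$. The sub-case in which the prime carrying the fourth power is itself the special prime in fact \emph{strengthens} the divisibility: that prime's exponent must then be odd and at least $5$, so e.g.\ when $a$ is special and $a^4\mid N$ we get $a^5b^2c^2\mid N$, a coefficient sum of $9$. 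A short enumeration over the sub-cases confirms that the minimum coefficient sum is exactly $7$, achieved precisely when the prime carrying the fourth power is distinct from the special prime and the special prime belongs to $\{a,b,c\}$.

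Once this is established, taking logarithms of $a^{e_a}b^{e_b}c^{e_c}<N$ yields $e_a\alpha+e_b\beta+e_c\gamma<\log N$. Since $\alpha<\beta$, $\alpha<\gamma$, and $e_b,e_c\geq 1$, we have
$$e_a\alpha+e_b\beta+e_c\gamma \;>\; (e_a+e_b+e_c)\alpha \;\geq\; 7\alpha,$$
so $7\alpha<\log N$ and hence $a<N^{1/7}$, as required.

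The proof is essentially disciplined bookkeeping; the only subtle point is the enumeration, where one must notice that the ``fourth-power prime is itself the special prime'' sub-cases only strengthen the conclusion rather than weaken it. None of the $\sigma_{m,n}$-pair machinery developed earlier in the paper is needed here.
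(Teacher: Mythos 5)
Your proof is correct and is essentially the paper's argument: the paper's proof is the one-line observation that $a^7 < a^4b^2c \mid N$, which is exactly your exponent-sum bookkeeping in its weakest case. If anything, your careful enumeration is more precise than the paper's statement, since the literal divisibility $a^4b^2c\mid N$ need not hold when the fourth power sits on $b$ or $c$ (e.g.\ $b^4\mid N$ with $a^2\|N$), whereas your formulation via $e_a+e_b+e_c\geq 7$ covers all sub-cases correctly.
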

\begin{proof} Assume that $p^4|N$ for some prime $p \in \{a,b,c\}$. Then we must have $a^7< a^4b^2c|N$, from which the result follows. 
\end{proof}

We may thus assume going forward that we have $a$, $b$, and $c$ raised to at most the second power.

\begin{proposition} Assume that $a^2||N$, $b^2||N$, and $c^2||N$. Then $a < (2N)^{\frac{1}{7}}$.
\end{proposition}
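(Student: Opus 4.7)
The plan is a short reuse of the $m$-type argument from Theorem~\ref{Generalbound}, which in exactly this case (namely $a^2||N$, $b^2||N$, $c^2||N$) was already invoked in the previous section to yield $a^3b^2c^2<2N$. Given that inequality, the bound on $a$ falls out of replacing $b$ and $c$ by the smaller quantity $a$ in the obvious direction. I therefore do not expect any substantive obstacle here: the same $m$-type conclusion that was previously used only to bound the product $abc$ is in fact already strong enough to bound $a$ on its own.

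In more detail, I would set $M=a^2b^2c^2$. Since an odd perfect number has more than three distinct prime factors, $M$ is a proper divisor of $N$, hence $M<\sigma(M)<2M$. If $p^2\mid\sigma(M)$ held for every $p\in\{a,b,c\}$, then $M\mid\sigma(M)$, forcing $\sigma(M)/M$ to be an integer strictly between $1$ and $2$, which is impossible. So some $p\in\{a,b,c\}$ satisfies $p^2\nmid\sigma(M)$. The identity $\sigma(N)=\sigma(M)\sigma(N/M)=2N$ combined with $p^2||N$ then forces $p\mid\sigma(q^e)$ for some prime power component $q^e$ of $N$ with $q\notin\{a,b,c\}$; since $\sigma(q^e)<2q^e$ this gives $q^e>p/2\geq a/2$, and the divisibility $q^eM\mid N$ yields $\tfrac{1}{2}a\cdot a^2b^2c^2<N$, i.e.\ $a^3b^2c^2<2N$.

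The proof then concludes in a single line: since $a<b$ and $a<c$,
$$a^7 = a^3\cdot a^2\cdot a^2 < a^3b^2c^2 < 2N,$$
so $a<(2N)^{1/7}$, as required.
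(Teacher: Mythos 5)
Your proposal is correct and follows essentially the same route as the paper, which likewise derives $a^3b^2c^2<2N$ from an $m$-type argument and then concludes via $a^7<a^3b^2c^2$; you have merely spelled out the $m$-type step that the paper leaves implicit. No issues.
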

\begin{proof} Under these assumptions, we have by an $m$-type argument that $a^3b^2c^2 < 2N$. Since $a^7< a^3b^2c$, the result follows.
\end{proof}

\begin{proposition} If $a||N$, $b^2||N$, and $c^2||N$, then $a < (2N)^{\frac{1}{7}}$.
\end{proposition}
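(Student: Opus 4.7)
The plan is to run an $m$-type argument on $M = ab^2c^2$, invoking Lemma~\ref{squared divisor in sigma 2,2 pair} to rule out a degenerate branch. Since $N$ has at least four distinct prime divisors, $M$ is a proper divisor of $N$ with $\gcd(M, N/M) = 1$, so $\sigma(M)\sigma(N/M) = 2N$ and therefore $v_c(\sigma(M)) + v_c(\sigma(N/M)) = 2$. My first step is to note that $v_c(\sigma(M)) \leq 1$: in the factorization $\sigma(M) = (a+1)(b^2+b+1)(c^2+c+1)$, the prime $c$ does not divide $a+1$ (since $c > a+1$) nor $c^2+c+1$, and $v_c(b^2+b+1) \leq 1$ since $b^2+b+1 < c^2$. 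Hence $v_c(\sigma(N/M)) \geq 1$, and I would split on whether this valuation equals $1$ or $2$.

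If $v_c(\sigma(N/M)) = 2$, the missing factor $c^2$ is supplied either by a single component $m$ of $N/M$ with $c^2 \mid \sigma(m)$, forcing $m > 2c^2/3$ via $\sigma(p^e)/p^e < 3/2$, or by two distinct components $m, m'$ each contributing one factor of $c$. In the latter situation the two components are powers of distinct odd primes, so at most one can be a power of $3$; using the sharper bound $\sigma(p^e)/p^e < 5/4$ available for $p \geq 5$, I obtain $mm' > (2c/3)(4c/5) = 8c^2/15$. Either sub-case yields $ab^2c^4 < 2N$, and since $b, c > a$ forces $a^7 < ab^2c^4$, we conclude $a < (2N)^{1/7}$.

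If instead $v_c(\sigma(N/M)) = 1$, then $v_c(\sigma(M)) = 1$, which forces $c \mid b^2+b+1$. Lemma~\ref{squared divisor in sigma 2,2 pair} applied with $p = b$ and $q = c$ then rules out $b^2 \mid c^2+c+1$, so $v_b(\sigma(M)) = v_b(\sigma(c^2)) \leq 1$ and hence $v_b(\sigma(N/M)) \geq 1$. This produces a component $m_b$ of $N/M$ with $b \mid \sigma(m_b)$. If $m_b$ coincides with the $m_c$ already supplying $c$, then $bc \mid \sigma(m_b)$ gives $m_b > 2bc/3$ and hence $ab^3c^3 < 3N/2$; if $m_b \neq m_c$, the same $3$-versus-$\geq 5$ refinement gives $m_b m_c > 8bc/15$ and hence $ab^3c^3 < 15N/8$. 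In both sub-cases $a^7 < ab^3c^3 < 2N$, again yielding the desired bound.

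The main obstacle I anticipate is keeping every constant strictly below $2$ in the final chain. The naive bound $m > 2p/3$, applied to two separate external components, only yields $mm' > 4p^2/9$ and hence $ab^2c^4 < 9N/4$, which is not good enough since $9/4 > 2$. The decisive refinement is that at most one of two distinct external components can be a power of $3$, combined with $\sigma(p^e)/p^e < 5/4$ for $p \geq 5$; this replaces the factor $4/9$ by $8/15$, and since $15/8 < 2$ the argument closes.
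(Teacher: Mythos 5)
Your proof is correct, but it takes a genuinely different route from the paper's. You run a full $m$-type valuation count on $M=ab^2c^2$: comparing $v_c(\sigma(M))$ and $v_b(\sigma(M))$ against $v_c(2N)=v_b(2N)=2$, you pull in one or two external components of $N/M$ and multiply them into $M$, and you need the observation that two distinct components cannot both be powers of $3$ (so at least one satisfies $\sigma(p^e)/p^e<5/4$) to push the worst constant from $9/4$ down to $15/8<2$; that refinement is genuinely necessary for your argument to close, and you identify it correctly. The paper instead stays entirely inside $\{a,b,c\}$: since $a$ is the special prime, $\sigma(a)=a+1$ is coprime to $bc$ and supplies a ``free'' factor larger than $a$, and the proof is a short case analysis on which of $a,b,c$ divide $\sigma(b^2)$ and $\sigma(c^2)$, in each case exhibiting an explicit divisor of $2N$ built from $\sigma(a)$, powers of $a,b,c$, and their sigma values that exceeds $a^7$; no external component is introduced and the constant stays at exactly $2$ throughout. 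Both arguments invoke Lemma \ref{squared divisor in sigma 2,2 pair} at the same juncture, namely once $c\mid\sigma(b^2)$ is forced, to rule out $b^2\mid\sigma(c^2)$. Your version buys a more systematic, bookkeeping-style proof that does not rely on $\sigma(a)$ appearing as a factor of the final divisor, at the cost of the extra $3$-versus-$5$ care; the paper's internal products avoid that entirely but depend on the special-prime structure of $\sigma(a)$.
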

\begin{proof}  Assume that $a||N$, $b^2||N$, and $c^2||N$. Note that $(bc, \sigma(a))=1$, since $\frac{a+1}{2} < b < c$. 

% If $(ab, \sigma(c^2)=1$, then we have $$a^7 < a^8 < a\sigma(a)b^2c^2\sigma(c^2)|2N.$$
% We may thus assume that either $a|\sigma(c^2)$, or $b|\sigma(c^2)$. 

If $b\not|\sigma(c^2)$, then  $$a^7 < (\sigma(a)b^2c^2 \sigma(c^2))|(2N).$$ Thus, we may assume that $b|\sigma(c^2)$.

If $a\not|\sigma(c^2)$, and $b||\sigma(c^2)$, then
$$a^7 < a\sigma(a)bc^2\sigma(c^2).$$

So we may assume that either $b^2|\sigma(c^2)$ or $a|\sigma(c^2)$.  If $ab^2|\sigma(c^2)$, then we have
\begin{equation}\label{a^3 < 2c^2}  a^3 < ab^2 < 2c^2. \end{equation}

If $c\not |\sigma(b^2)$, then $$a^7 < (\sigma(a)\sigma(b^2)b^2c^2) | (2N), $$ so we may assume that  $c|\sigma(b^2)$. Since $c|\sigma(b^2) $
by Lemma \ref{squared divisor in sigma 2,2 pair}, we must have $b^2 \not|\sigma(c^2)$, and so we have $b||\sigma(c^2)$, and thus may assume that $a|\sigma(c^2)$.  Since $a||N$, and $a|\sigma(c^2)$, we must then have $a \not|\sigma(b^2)$. (We could also reach this conclusion via Lemma \ref{p2+p+1 and q2 + q+ 1 are almost relatively prime when coming from a sigma 2 2 pair}.)

We then have 

$$a^7 < a\sigma(a)\sigma(b^2)b^2c < 2N$$

and so we are done.

\end{proof}

\begin{proposition} If $a^2||N$, $b||N$ and $c^2||N$, then $a < (2N)^{\frac{1}{7}}.$
\end{proposition}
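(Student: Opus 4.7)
Since $a$ and $c$ are raised to even powers and $b$ to the first power, Euler's characterization forces $b$ to be the special prime: $b \equiv 1 \pmod 4$, so $\sigma(b) = b+1$ satisfies $2 \| \sigma(b)$ and $t := \sigma(b)/2$ is odd. My plan is to parallel the preceding proposition (the case $a\|N,\, b^2\|N,\, c^2\|N$): in each sub-case, construct a divisor of $N$ of the form $a^2 b c^2 \cdot X$, with $X$ assembled from factors of $\sigma(a^2)$, $\sigma(b)$, $\sigma(c^2)$ that are coprime to $abc$, and show the product exceeds $a^7$. The starting point is that $\sigma(a^2)\sigma(b)\sigma(c^2) \mid \sigma(N) = 2N$, so after peeling off the $a,b,c$-parts dictated by the sub-case the remainder must divide $N_0 := N/(a^2 b c^2)$.

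The easy case is $a^2 \mid \sigma(b)$: since $a^2$ is odd and $b+1$ even, this forces $b+1 \geq 2a^2$, hence $b \geq 2a^2-1 > a^2$ and then $c > b > a^2$, so $a^2 b c^2 > a^8 > a^7$ from $a^2 b c^2 \mid N$ alone. Otherwise split on whether $a \nmid \sigma(b)$ or $a \| \sigma(b)$; in the second sub-case one gains $b \geq 2a-1$ (from $a \mid b+1$ with $b > a$) and hence $c \geq 2a+1$, which makes the later inequalities comfortable. Then split on whether $a \mid \sigma(c^2)$ and $b \mid \sigma(c^2)$, and when needed on whether $(\sigma(a^2), bc) = 1$. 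In the "generic" sub-case where $\sigma(a^2)$, $t$, and $\sigma(c^2)$ are all coprime to $abc$, one immediately gets $a^2 b c^2 \cdot t\, \sigma(c^2) \mid N$, which is at least $a^2 b c^2 \cdot (b/2) \cdot c^2 = a^2 b^2 c^4/2 > a^7$; the other sub-cases produce similar products once the appropriate cofactor $\sigma(a^2)/c$ or $\sigma(c^2)/(ab)$, etc., is extracted.

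Two structural inputs drastically prune the case tree. First, since $b \| N$, multiplicativity of $\sigma$ gives $\sum_i v_b(\sigma(p_i^{a_i})) = 1$, which immediately rules out $b^2 \mid \sigma(c^2)$ and also rules out $b \mid \sigma(a^2)$ once $b \mid \sigma(c^2)$ holds. Second, the $\sigma_{2,2}$-pair lemmas eliminate the dangerous configurations: Lemma~\ref{squared divisor in sigma 2,2 pair} rules out $a^2 \mid \sigma(c^2)$ together with $c \mid \sigma(a^2)$; and Lemma~\ref{PQR lemma}, applied with $p=a$, $q=c$, $r=b$ and using $b \equiv 1 \pmod 4$, rules out the combination $a \mid \sigma(b)$, $ab \mid \sigma(c^2)$, $c \mid \sigma(a^2)$, which is the tightest remaining configuration.

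The main obstacle I expect is the "triple cycle" sub-case $a \mid \sigma(b)$, $b \mid \sigma(c^2)$, $c \mid \sigma(a^2)$, $a \nmid \sigma(c^2)$, where neither lemma applies. Setting $s_c := \sigma(a^2)/c$, $t' := \sigma(b)/(2a)$, $r' := \sigma(c^2)/b$, the crude bound $s_c t' r' > ac/2$ only yields $a^3 b c^3/2 < N$, which is not enough. The refinement is to use that $b < c$ forces $r' = (c^2+c+1)/b > c$ and $b \geq 2a-1$ forces $t' \geq 1$; combined with $s_c > a^2/c$ these give $s_c t' r' > a^2$, hence $a^4 b c^2 < N$, and the remaining inequality $a^3 < bc^2$ is immediate from $b \geq 2a-1$, $c \geq 2a+1$. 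Once this pattern is in place, working through the eight binary sub-cases of $(a\mid\sigma(b)?, a\mid\sigma(c^2)?, b\mid\sigma(c^2)?)$ with their $\sigma(a^2)$-refinements is mechanical bookkeeping.
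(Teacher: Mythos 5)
Your proof is correct, but it is organized quite differently from the paper's. The paper disposes of this case using only $\sigma(a^2)$ and $\sigma(b)$: after the easy case $a^2\,|\,\sigma(b)$ it splits on $(bc,\sigma(a^2))$ and invokes Lemma~\ref{p|q+1 and q|p^2+p+1 lemma} (no $\sigma_{1,2}$ pairs) when $b\,|\,\sigma(a^2)$; it never touches $\sigma(c^2)$, Lemma~\ref{squared divisor in sigma 2,2 pair}, or Lemma~\ref{PQR lemma}. You instead run a full three-factor bookkeeping on $\sigma(a^2)\sigma(b)\sigma(c^2)\,|\,2N$, bounding the $a$-, $b$- and $c$-valuations of that product (the key observations being that $c$ can occur at most once, via $c\,|\,\sigma(a^2)$, and that $v_b=1$ kills $b^2\,|\,\sigma(c^2)$ and the simultaneous $b\,|\,\sigma(a^2)$, $b\,|\,\sigma(c^2)$), and you eliminate the only valuation pattern that would defeat the crude count by exactly the right two lemmas: Lemma~\ref{squared divisor in sigma 2,2 pair} for $a^2\,|\,\sigma(c^2)$ with $c\,|\,\sigma(a^2)$, and Lemma~\ref{PQR lemma} (with $p=a$, $q=c$, $r=b$, using that $b$ is the special prime) for $a\,|\,\sigma(b)$, $ab\,|\,\sigma(c^2)$, $c\,|\,\sigma(a^2)$. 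What your longer route buys is robustness in precisely the configuration $a\,\|\,\sigma(b)$, $c\,|\,\sigma(a^2)$: the paper's displayed product $a\sigma(a^2)b^2c\sigma(b)$ cannot divide $2N$ when $b\,\|\,N$, and the corrected two-factor product only yields $a^6<2N$ there, so bringing in $\sigma(c^2)$ as you do is not optional bookkeeping but the substantive step. Two small remarks: your "easy case" bound $b>a^2$ is cleaner than the paper's appeal to $b\le(2N)^{1/5}$; and your worry that the crude estimate $a^3bc^3<2N$ in the triple-cycle case "is not enough" is a miscalculation --- since $b,c>a$ it already gives $a^7<2N$ --- though the refinement you supply is also valid, so nothing is lost.
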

\begin{proof} Assume that $a^2||N$, $b||N$ and $c^2||N$. If we have $a^2|\sigma(b)$, then since we have $b \leq (2N)^{\frac{1}{5}}$, we have 
$$a < \sqrt{\frac{b+1}{2}} < b < (2N)^{\frac{1}{10}} < (2N)^{\frac{1}{7}}. $$ Thus, we may assume that $a^2 \not|\sigma(b)$.

Note that $c^2 \not |\sigma(a^2)$, and $c\not |\sigma(b)$. We also have that $\sigma(a^2) < c^2 $ and so $c^2 \not|\sigma(a^2)$. We claim that we also must have $bc \not|\sigma(a^2)$. To see this, note that $$bc > (a+2)(a+4)= a^2 + 6a + 8 > a^2+a+1=\sigma(a^2). $$

Note that if $(bc, \sigma(a^2))=1$, then we have
$$a^7 < a^2\sigma(a^2)bc^2 < 2N.$$ We may thus assume that we have exactly one of $b|\sigma(a^2)$ and $c|\sigma(a^2)$.

First, let us consider the case when $b|\sigma(a^2)$ and $c\not|\sigma(a^2)$. We may apply Lemma \ref{p|q+1 and q|p^2+p+1 lemma} to conclude that $a\not|\sigma(b)$. We then have 
$$a^7 < a\sigma(a^2)b\sigma(b)c^2 < 2N$$ which implies the desired bound.

Now, consider the possibility that $c|\sigma(a^2)$ and $b\not|\sigma(a^2). $ We already established that $a^2\not|\sigma(b)$, and so we have 
$$a^7 <  (a\sigma(a^2)b^2c\sigma(b))|(2N)$$ which again gives us our desired bound. 
\end{proof}

Putting all the above propositions from this section together, we have the following dichotomy. 

\begin{theorem} Either $a < (2N)^{\frac{1}{7}}$ or we have $a^2||N$, $b^2||N$ and $c||N$.
\end{theorem}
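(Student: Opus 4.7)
The plan is to combine the four propositions just proved via a direct case analysis driven by Euler's structure theorem for odd perfect numbers: exactly one prime divisor of $N$ (the ``special'' prime) is raised to an exponent $\equiv 1 \pmod 4$, and every other prime divisor occurs with an even exponent.

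First I would dispatch the case in which at least one of $a$, $b$, $c$ appears to the fourth power or higher in $N$; the first proposition of the section already gives $a < N^{1/7} \leq (2N)^{1/7}$. So from then on I may assume that the exponents of $a$, $b$, and $c$ in $N$ are each at most $3$.

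Combining this upper bound on exponents with Euler's theorem, each of $a$, $b$, $c$ must be raised to exactly the second power, except that at most one of them can be the special prime, in which case that one is raised to exactly the first power. This leaves four mutually exclusive subcases: all three exponents equal to $2$, or one of $a$, $b$, $c$ equal to $1$ with the remaining two equal to $2$. The first three configurations --- namely $a^2||N$, $b^2||N$, $c^2||N$; next $a||N$, $b^2||N$, $c^2||N$; and finally $a^2||N$, $b||N$, $c^2||N$ --- are exactly those handled by the remaining propositions in this section, each yielding $a < (2N)^{1/7}$. The fourth configuration, $a^2||N$, $b^2||N$, $c||N$, is precisely the exceptional case allowed by the statement of the theorem.

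Since nothing new is proved here and the argument is merely consolidation, there is no real obstacle to overcome; the hard work is already contained in the four propositions above, and the only care required is to verify that the case split is exhaustive, which it is by Euler's theorem together with the initial reduction to exponents at most $3$.
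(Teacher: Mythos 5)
Your proposal is correct and matches the paper's intent exactly: the paper states this theorem with no separate proof, presenting it as the consolidation of the four preceding propositions, and your case split (exponent $\geq 4$ handled by the first proposition, otherwise Euler's theorem forces each of $a$, $b$, $c$ to be either squared or the special prime to the first power, leaving exactly the four configurations) is precisely the implicit argument. Nothing to add.
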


One obvious question is what we can say about this last situation. In that regard we have the following result. 

\begin{proposition}If $a^2||N$, $b^2||N$ and $c||N$, then either $a < (2N)^\frac{1}{7}$, or all the following must hold: We have $c|\sigma(a^2)$, $b|\sigma(c)$, and $a^2|\sigma(b^2)$. There exists a prime $d$ and a positive integer $j$ such that 
\begin{enumerate}
    \item $d \not \in \{a,b,c\}$
    \item $d^j||N$   
    \item $b|\sigma(d^j)$
    \item $d|\sigma(a^2)$
    \item $d^j \not|\sigma(a^2b^2c)$. 
    \item $d^j < \frac{1}{2}a^2$.
\end{enumerate}
\end{proposition}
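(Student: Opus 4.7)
The plan is to exploit the deficiency of $M=a^2b^2c$ through a multiplicity accounting in $\sigma(M)=\sigma(a^2)\sigma(b^2)\sigma(c)\mid 2N$, and then to locate $d$ via an $m$-type argument applied to the prime $b$. Assume throughout that $a\ge (2N)^{1/7}$, i.e., $a^7\ge 2N$.

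\emph{Setup.} Write $\sigma(M)=a^Ab^Bc^CT$ with $T$ coprime to $abc$. Since $\sigma(M)\mid 2N$, we have $A\le 2$, $B\le 2$, $C\le 1$, and $T\mid N/(a^2b^2c)$. The inequality $\sigma(M)>M$ yields $T>a^{2-A}b^{2-B}c^{1-C}$, so
$$N\ge (a^2b^2c)\,T>a^{4-A}b^{4-B}c^{2-C}\ge a^{10-A-B-C}$$
using $b,c>a$. Combined with $a^7\ge 2N$ this forces $A+B+C\ge 4$. Also, $b\ge a+2$ gives $b^2>\sigma(a^2)$ and hence $v_b(\sigma(a^2))\le 1$, and similarly $v_c(\sigma(a^2))\le 1$ and $v_c(\sigma(b^2))\le 1$.

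\emph{Forcing the three divisibilities.} First I would show $c\mid\sigma(a^2)$: if not, the configurations attaining $A+B+C\ge 4$ are narrow, and each is eliminated either by Lemma \ref{squared divisor in sigma 2,2 pair} (ruling out simultaneous $b\mid\sigma(a^2)$ and $a^2\mid\sigma(b^2)$) or by direct size estimates contradicting $a^7\ge 2N$. Then $c^2>\sigma(a^2)$ gives $c\|\sigma(a^2)$, so $C=1$ and $c\nmid\sigma(b^2)$. If $B=2$, then $M\mid\sigma(M)<2M$ forces $\sigma(M)=M$, impossible; so $B=1$ and hence $A=2$. A case analysis on the splits of $A=2$ and $B=1$ among $\sigma(a^2),\sigma(b^2),\sigma(c)$ then eliminates: (a) $b\mid\sigma(a^2)$ paired with $a^2\mid\sigma(b^2)$ by Lemma \ref{squared divisor in sigma 2,2 pair}; (b) $b\mid\sigma(a^2)$ paired with $a\mid c+1$, which forces $bc\mid\sigma(a^2)<2a^2$ and then $c=2a-1$, yielding $b\le (a^2+a+1)/(2a-1)<a$; (c) $a\mid\sigma(c)$ combined with $b\mid c+1$, which gives $ab\mid c+1<(3/2)^{1/3}a^{7/3}$, incompatible with $b>a$. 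The surviving configuration is exactly $c\mid\sigma(a^2)$, $b\mid\sigma(c)$, $a^2\mid\sigma(b^2)$.

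\emph{Producing the prime $d$.} Write $\sigma(a^2)=cs_a$, $\sigma(b^2)=a^2s_b$, $\sigma(c)=bs_c$ with each $s_\ast$ coprime to $abc$. The relation $v_b(\sigma(N))=2$ against $v_b(\sigma(M))=1$ produces a unique prime power $d^j\|N$ with $d\notin\{a,b,c\}$ and $b\mid\sigma(d^j)$, establishing (1)--(3). The case $s_a=1$ (i.e., $c=a^2+a+1$) is ruled out because $s_bs_c>bc/a^2>b$ gives $N>a^2b^3c$, which together with $a^7\ge 2N$ and $c>a^2$ forces $a>b$. For (4), if instead $d\nmid s_a$ then $a^2b^2c$, $d^j$, $s_a$ are pairwise coprime divisors of $N$, so $N\ge a^2b^2c\cdot d^j\cdot s_a$; combined with $d^j>b/2$ (from $b\mid\sigma(d^j)<2d^j$) and $s_a>a^2/c$, this yields $N>a^4b^3/2$, again contradicting $b>a$. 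Hence $d\mid s_a$. For (5), the equality $v_d(\sigma(M))=j$ would give $d^j\mid\sigma(M)$, and a parallel coprime-divisor argument then forces $a<(2N)^{1/7}$; so $v_d(\sigma(M))<j$, i.e., $d^j\nmid\sigma(M)=\sigma(a^2b^2c)$. Finally (6) follows by bounding $d\le s_a<2a^2/c$ together with the constraint from (5), which caps how large $d^j$ can grow relative to $v_d(s_a)$, giving $d^j<a^2/2$ once $c>4$.

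The hard step will be the last one: pinning down that the $d$ arising from the $m$-type argument for $b$ lies in $s_a$ precisely enough to derive both the non-divisibility (5) and the clean size bound $d^j<a^2/2$ of (6), rather than just $d^j<2a^2/c$ or similar slack.
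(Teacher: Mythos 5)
Your overall skeleton matches the paper's: rule out all divisibility configurations except $c\mid\sigma(a^2)$, $b\mid\sigma(c)$, $a^2\mid\sigma(b^2)$, then run an $m$-type argument on the prime $b$ to produce $d^j$, and finish items (4)--(6) by coprime-divisor size estimates and abundancy. The setup ($A+B+C\ge 4$) and the arguments for (1)--(4) are essentially sound. But the middle step --- forcing the three divisibilities --- has a genuine gap. The configuration $b\mid\sigma(a^2)$, $a\,\|\,\sigma(b^2)$, $c\,\|\,\sigma(b^2)$, $a\mid\sigma(c)$, $c\nmid\sigma(a^2)$ attains $A+B+C=4$ and is \emph{not} eliminated by any size estimate: the coprime product it yields is only about $a^2b^3c/4>a^6/4$, which is compatible with $a^7\ge 2N$. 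The paper kills exactly this configuration with Lemma \ref{PQR lemma} applied to $(p,q,r)=(a,b,c)$, which requires the hypothesis $r\equiv 1\pmod 4$ --- i.e., it uses that $c$, being the prime raised to an odd power, is the special prime. Your plan never invokes Lemma \ref{PQR lemma} or the mod-$4$ condition, and the two cases you offer in its place do not work: in case (b), ``$b\mid\sigma(a^2)$ and $a\mid c+1$ forces $bc\mid\sigma(a^2)$'' is a non sequitur (nothing puts $c$ into $\sigma(a^2)$ there, and you are in the case $c\nmid\sigma(a^2)$); in case (c), $ab\mid c+1$ gives only $a^2<ab\le c+1\lesssim a^{7/3}$, which is no contradiction at all.

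Two smaller problems. First, ``if $B=2$ then $M\mid\sigma(M)$'' is false when $A=1$: the valuation vector $(A,B,C)=(1,2,1)$ satisfies $A+B+C\ge 4$ but does not give $M\mid\sigma(M)$, so that branch needs its own argument (the paper handles $b^2\mid\sigma(c)$ by computing $\sigma(a^2)\sigma(b^2)\sigma(c)\ge 2a^2b^2c$ directly, but only after $c\mid\sigma(a^2)$ and $a^2\mid\sigma(b^2)$ are already in hand). Second, your derivation of item (6) via $d\le s_a<2a^2/c$ bounds $d$, not $d^j$, and the appeal to (5) to ``cap'' $d^j$ is not a proof; the correct argument is much simpler: if $d^j\ge\frac12 a^2$ then $a^5<a^2b^2c\le N/d^j\le 2N/a^2$, contradicting $a^7\ge 2N$.
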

\begin{proof} We will assume that we have $a^2||N$, $b^2||N$ and $c||N$, and that the first case above does not hold. Note that we may assume that
$(bc, \sigma(a^2))>1$ since if $bc$ and $\sigma(a^2)$ are relatively prime, we would have
$$a^7 < (a^2b^2c\sigma(a^2))|(2N).$$ As before, we cannot have $bc|\sigma(a^2)$ so we have exactly one of $b|\sigma(a^2)$ or $c|\sigma(a^2)$. 

Let us first consider the case where $b|\sigma(a^2)$ and $c\not|\sigma(a^2)$. Note that if $a\not|\sigma(b^2)$ then $$a^7 < (a^2b\sigma(b^2)c\sigma(a^2)) |(2N).$$ 
Therefore, we may assume that $a|\sigma(b^2)$. Since $a$ and $b$ form a $\sigma_{2,2}$ pair, we have by Lemma \ref{squared divisor in sigma 2,2 pair} that $a^2\not|\sigma(b^2)$.

Now, if we have $(ab,\sigma(c))=1$, then we have 
$$a^7 < a\sigma(b^2)\sigma(a^2)b\sigma(c)|2N$$ so we may assume that either $a|\sigma(c)$ or $b|\sigma(c)$.
Let us first consider the case where $b|\sigma(c)$. We must have, by Lemma \ref{p|q+1 and q|p^2+p+1 lemma}, that $c\not|\sigma(b^2)$, and hence 
$$a^7 < (abc\sigma(b^2)\sigma(a^2))|N$$  We may assume that $b \not|\sigma(c)$, and hence that $a|\sigma(c)$. By Lemma \ref{PQR lemma}, and again using that the special prime must be $1$ (mod 4), we must have that $c\not|\sigma(b^2)$. So again we obtain $$a^7< ab\sigma(a^2)\sigma(b^2)c|N.$$

We now consider the case where $c|\sigma(a^2)$, and $b\not|\sigma(a^2)$. By Lemma \ref{p|q+1 and q|p^2+p+1 lemma}, we have $a \not|\sigma(c)$. Now, if $b\not|\sigma(c)$, we then have that $$a^7 < (a^2\sigma(a^2)b^2\sigma(c))|(2N), $$
so we may assume that $b|\sigma(c)$. Now, note that if 
$a^2 \not|\sigma(b^2)$, then we have
$$a^7 < a\sigma(a^2)\sigma(b^2)b^2 < N,$$ and so we have $a^2 |\sigma(b^2)$.

We have already established that $b|\sigma(c)$. We now wish to show that $b||\sigma(c)$. Assume that $b^2|\sigma(c)$, then we have
$$\sigma(a^2b^2c) =\sigma(a^2)\sigma(b^2)\sigma(c) \geq ca^2 2b^2 = 2a^2b^2c.$$ But that would mean that $a^2b^2c$ is either perfect or abundant and is a proper divisor of $N$, which is a contradiction. Hence the assumption that $b^2|\sigma(c)$ must be false.

By an $m$-type argument, there is a prime $d$ and and a positive integer $j$ such that $d^j||N$, $d \not \in \{a,b,c\}$, and $b|\sigma(d^j)$. 

Since $b|\sigma(d^j)$ we have that $d^j> \frac{1}{2}b$. Now, if $d \not|\sigma(a^2)$, then we have 
$$ \frac{1}{2}a^7 < a^4b^2\frac{1}{2}b< (a^2 \sigma(a^2)b^2d^j) |N.$$

So we may assume that $d|\sigma(a^2)$. Now, assume that $d^j|\sigma(a^2b^2c)$. In that case we
have $(a^2b^2cd^j)|\sigma(a^2b^2cd^j)$ so $a^2b^2cd^j$ is perfect or abundant, which is impossible since $a^2b^2cd^j$ is a proper divisor of $N$.  (Note that here we are using that an odd perfect number must have at least five distinct prime factors.) 

We now just need to prove Item 6. So assume that $d^j \geq \frac{1}{2}a^2.$ Then we have
$$a^5< a^2b^2c <  \frac{N}{d^j} < \frac{2N}{a^2},$$ and we can then solve the resulting inequality for $a$. 
\end{proof}

Note that we can improve Item 6's bound by using the fact that an odd perfect number must be divisible by more primes, and so we can replace the $\frac{1}{2}$ in Item 6 with a much smaller constant.

\section{Towards an improvement of bounds on $bc$}

The situation for trying to improve the bound on $bc$ is very similar to that with $a$. Namely, we can get tighter bounds in all cases except for certain specific contexts when $b^2||N$ and $c||N$.

\begin{proposition}
If $N$ is an odd perfect number, with $b^2||N$, and $c^2||N$, then $$bc \leq 2(3^{1/3})N^{\frac{5}{12}}.$$
\end{proposition}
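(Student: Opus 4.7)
The plan is to use that $\sigma(b^2)$ and $\sigma(c^2)$ are odd divisors of $\sigma(N)=2N$ and hence of $N$, given $b^2||N$ and $c^2||N$.  Since $b < c$ are distinct odd primes, $c\geq b+2$, so $c^2 > b^2+b+1 = \sigma(b^2)$, and in particular $c^2 \not| \sigma(b^2)$.  I would then split into three cases according to the divisibility relations between $\{b,c\}$ and $\{\sigma(b^2),\sigma(c^2)\}$.

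In the first case, suppose $c \not| \sigma(b^2)$.  Then $b^2$, $c^2$, $\sigma(b^2)$ are pairwise coprime (using $\gcd(b^2,\sigma(b^2))=1$), so $b^2 c^2 \sigma(b^2) | N$, whence $b^4 c^2 < N$.  Combining this with Acquaah--Konyagin's $3\gamma \leq \log N + \log 3$ via the linear combination $\frac{1}{4}(4\beta + 2\gamma) + \frac{1}{6}(3\gamma)$ gives $\beta + \gamma < \frac{5}{12}\log N + \frac{1}{6}\log 3$, i.e., $bc < 3^{1/6} N^{5/12}$, well below the target.  In the second case, suppose $c | \sigma(b^2)$ but $b \not| \sigma(c^2)$.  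Symmetrically, $b^2 c^2 \sigma(c^2) | N$, so $b^2 c^4 < N$; combining with $5\beta \leq \log N + \log 2$ via $\frac{1}{4}(2\beta + 4\gamma) + \frac{1}{10}(5\beta)$ yields $bc < 2^{1/10} N^{7/20}$, again below the target.

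The third case has $b$ and $c$ forming a $\sigma_{2,2}$ pair.  Here I would apply Lemma \ref{squared divisor in sigma 2,2 pair} in both directions to conclude $b || \sigma(c^2)$ and $c || \sigma(b^2)$.  Writing $\sigma(b^2) = cm$ with $\gcd(bc,m)=1$, the integers $b^2$, $c^2$, $m$ are pairwise coprime, so $b^2 c^2 m = b^2 c \sigma(b^2) > b^4 c$ divides $N$, giving $b^4 c < N$; the symmetric argument gives $bc^4 < N$.  Multiplying these inequalities yields $(bc)^5 < N^2$, so $bc < N^{2/5}$, which satisfies $bc \leq 2 \cdot 3^{1/3} N^{5/12}$ for all $N \geq 1$ since $5/12 > 2/5$ and $2\cdot 3^{1/3} > 1$.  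The main obstacle is setting up the correct pairwise coprimality in the third case so that Lemma \ref{squared divisor in sigma 2,2 pair} can be invoked to strip off the ``extra'' factor of $c$ in $\sigma(b^2)$ (and symmetrically for $\sigma(c^2)$); once that is done, the remaining work is the routine algebra of linear combinations together with a single size comparison.
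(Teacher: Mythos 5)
Your proof is correct and follows essentially the same route as the paper's: the binding case is $c \nmid \sigma(b^2)$, where $b^4c^2 \le N$ combined with the Acquaah--Konyagin bound on $c$ gives $bc \le 3^{1/6}N^{5/12}$, and your remaining cases (handled via Lemma \ref{squared divisor in sigma 2,2 pair} when $b$ and $c$ form a $\sigma_{2,2}$ pair, and via the bound $b<(2N)^{1/5}$ otherwise) all yield strictly smaller exponents, just as the paper's do. The only differences are cosmetic --- a slightly finer case split and a linear-combination step in place of the paper's ``set $c=N^{\alpha}$ and optimize'' --- so the argument matches the published one in substance.
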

\begin{proof} Assume $b^2||N$, and $c^2||N$.  If we have that $c \not|\sigma(b^2)$ and $b \not|\sigma(c^2)$ then we have that $$b^4c^4 < b^2\sigma(b^2)\sigma(c^2)|2N,$$ and so 
$bc < 2N^{\frac{1}{4}}.$ We thus may assume that either $b|\sigma(c^2)$ or that $c|\sigma(b^2)$. Note that $c^2 \not|\sigma(b^2)$. To see why, note that $b^2+b+1$ is not a perfect square; so if $c^2|(b^2+b+1)$ we  must have $3c^2 \leq b^2+b+1$. But that would force $c<b$.

Now, assume that $c \not |\sigma(b^2)$. Then we have $$b^4c^2 < (b^2\sigma(b^2)c^2) |(2N),$$
and so $b^4c^2 < 2N$. Now set $c=N^\alpha$. Then $$b \leq \left(\frac{2N}{N^{2\alpha}}\right)^{\frac{1}{4}} < 2N^{\frac{1}{4}-\frac{\alpha}{2}}.$$ Then
$$bc < 2N^{\frac{1}{4}-\frac{\alpha}{2}}N^{\alpha} = 2N^{\frac{1}{4} + \frac{\alpha}{2} }$$
We can make this quantity as large as possible by making $\alpha$ as large as possible, which would occur when we have $c= 3^{1/3}N^{1/3}$. Thus,  
$$bc \leq 2(3^{1/6})N^{\frac{5}{12}}.$$

We may thus assume that $c||\sigma(b^2)$. Then by Lemma \ref{squared divisor in sigma 2,2 pair} we have that $b^2 \not|\sigma(c^2)$. We then have that 
$$b^3c^3 < (b\sigma(b^2)c\sigma(c^2)) |(2N)$$ and so $bc \leq (2N)^{1/3}$.
\end{proof}

\begin{proposition}If $b||N$ and $c^2||N$ then $bc \leq (2N)^{2/5}$.
\end{proposition}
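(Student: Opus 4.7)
The plan is to show $bc^4 < 2N$ in every sub-case and then invoke $b < (2N)^{1/5}$ from Inequality \ref{b bound from Z1}. Indeed, once $bc^4 < 2N$, we have $c < (2N/b)^{1/4}$, so
$$bc < b^{3/4}(2N)^{1/4} < (2N)^{3/20}(2N)^{5/20} = (2N)^{2/5},$$
and the proposition follows.

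To establish $bc^4 < 2N$ I would split on whether $b$ divides $\sigma(c^2) = c^2+c+1$. If $b \nmid \sigma(c^2)$, then $b$, $c^2$, and $\sigma(c^2)$ are pairwise coprime divisors of $N$ (using $\gcd(c, c^2+c+1) = 1$, $b \parallel N$, and the standing assumption), so $bc^2 \sigma(c^2) \mid N$. Since $\sigma(c^2) > c^2$, this immediately yields $bc^4 < N < 2N$.

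If instead $b \mid \sigma(c^2)$, I would apply the $m$-type argument to $M = bc^2$, which is deficient as a proper divisor of the perfect $N$. Since $b$ and $c$ are odd primes with $b < c$, we have $b+1 < c$, so $v_c(\sigma(M)) = v_c((b+1)(c^2+c+1)) = 0$; but $v_c(\sigma(N)) = v_c(2N) = 2 = v_c(N)$, so the prime power components of $N$ coprime to $bc$ must collectively supply a factor of $c^2$ to $\sigma(N)$. Either a single component $d^j \parallel N$ with $d \notin \{b,c\}$ satisfies $c^2 \mid \sigma(d^j)$, which forces $d^j > c^2(d-1)/d \geq 2c^2/3$ and hence $bc^4 < 3N/2$; or two distinct primes $d_1, d_2 \notin \{b,c\}$ each contribute a single factor of $c$, so that $d_i^{j_i} > c(d_i-1)/d_i$. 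In this second sub-case, since $d_1$ and $d_2$ are distinct odd primes, the quantity $(d_1-1)(d_2-1)/(d_1 d_2)$ is minimized over such pairs at $\{d_1, d_2\} = \{3, 5\}$, giving $d_1^{j_1} d_2^{j_2} > 8c^2/15$ and hence $bc^4 < 15N/8$. Both $3/2$ and $15/8$ are less than $2$, so $bc^4 < 2N$ in every case.

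The main obstacle is the two-prime alternative in the $m$-type split: the weaker per-prime bound $d_i^{j_i} > c(d_i-1)/d_i$ could in principle degrade the constant badly. It is essential here that $d_1$ and $d_2$ are \emph{distinct} odd primes, forcing the worst-case product ratio $(2/3)(4/5) = 8/15$ at $\{3,5\}$ rather than the naive $(2/3)^2 = 4/9$ one would get if both primes could equal $3$; the latter would only yield $bc^4 < 9N/4 > 2N$, which would not be strong enough to close the argument.
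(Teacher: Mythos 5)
Your proof is correct and follows essentially the same route as the paper: an $m$-type argument showing that the components of $N$ coprime to $bc$ must supply the factor $c^2$ of $\sigma(N)=2N$ and therefore have product exceeding a constant multiple of $c^2$, which gives $bc^4 < 2N$ and hence the bound. The paper reaches this more directly by taking the single deficient unitary divisor $m = N/(bc^2)$ with $c^2 \mid \sigma(m)$, so that $\sigma(m) < 2m$ forces $m > c^2/2$ at once, avoiding both your case split on whether $b \mid \sigma(c^2)$ and the one-component versus two-component analysis.
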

\begin{proof} Assume as given. Note that $c \not|\sigma(b)$ since if it did, we would have $c \leq \frac{b+1}{2} < b$. Thus, there exists $m$ such that $m|N$, $(m,N/m)=1$, $(m,bc)=1$, and $c^2|\sigma(m)$. Note that since $N$ is perfect, $m$ is deficient, and so we must have $m > \frac{c^2}{2}$. We then have
$$\frac{1}{2}c^2bc^2 \leq mbc^2 |N$$ and so 
$$b^{\frac{5}{2}}c^{\frac{5}{2}} \leq 2N,$$ from which the result follows.
\end{proof}

\begin{proposition} If either $b^4|N$ or $c^4|N$, then we have that $$bc \leq 4N^{\frac{4}{9}}.$$
\end{proposition}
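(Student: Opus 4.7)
The plan is to split on whether $c^4 \mid N$ or $b^4 \mid N$ (with $c^4 \nmid N$).

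When $c^4 \mid N$, since $a, b, c$ are distinct primes dividing $N$, we have $abc^4 \mid N$. Writing $bc = b^{3/4}(bc^4)^{1/4}$ and applying $b \leq (2N)^{1/5}$ from Inequality \ref{b bound from Z1} gives $bc \leq 2^{3/20} N^{2/5}/a^{1/4}$; Iannucci's bound $a > 100$ then yields $bc < 4 N^{4/9}$ comfortably, since $2/5 < 4/9$.

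When $b^4 \mid N$ and $c^4 \nmid N$, Euler's theorem forces $v_c(N) \in \{1, 2\}$. If $v_c(N) = 2$ then $ab^4c^2 \mid N$, and combining this with Acquaah--Konyagin $c \leq (3N)^{1/3}$ through $(ab^4c^2)^{1/4}(c^3)^{1/6}$ gives $a^{1/4}bc \leq 3^{1/6} N^{5/12}$, which with $a > 100$ and $5/12 < 4/9$ is stronger than required. If $v_c(N)=1$ then $c$ is the special prime, $a$ is non-special so $v_a(N) \geq 2$, and $v_b(N)\geq 4$ is even. The subcase $v_b(N) \geq 6$ yields $ab^6c \mid N$, and the combination $(ab^6c)^{1/6}(c^3)^{5/18}$ produces $a^{1/6}bc \leq 3^{5/18} N^{4/9}$, which is within the target.

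The main obstacle is the remaining situation $v_a(N) = 2$, $v_b(N) = 4$, $v_c(N) = 1$, where one has only $M = a^2 b^4 c \parallel N$ to work with; a single application of Acquaah--Konyagin to $a^2b^4c \leq N$ gives $a^{1/2}bc \leq 3^{1/4} N^{1/2}$, which using $a>100$ is not strong enough to reach $N^{4/9}$ for large $N$. The plan here is to apply the $m$-type argument from the proof of Theorem \ref{Generalbound} to $M$: since $M$ is a proper divisor of the perfect $N$, it is deficient, so some component $p_j^{a_j} \in \{a^2, b^4, c\}$ fails to divide $\sigma(M)$, and perfection produces an additional component $p_\ell^{a_\ell}$ of $N$ with $p_\ell < a$ and $p_\ell^{a_\ell} > p_j/2$. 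If $p_j = c$, we obtain $a^2 b^4 c^2 < 2N$, reducing to the $v_c=2$ calculation above.

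For $p_j \in \{a, b\}$ the single $m$-type step gives only $a^3 b^4 c < 2N$ or $a^2 b^5 c < 2N$, which is insufficient by itself. The plan here is to iterate the $m$-type argument on the enlarged deficient divisor $M \cdot p_\ell^{a_\ell}$, producing a second additional component, and to keep iterating as needed. In the favourable scenario where two consecutive missing components are both $b$, one obtains $a^2 b^6 c < 4N$, and then $(a^2 b^6 c)^{1/6}(c^3)^{5/18}$ gives $a^{1/3}bc \leq 4^{1/6} 3^{5/18} N^{4/9}$, from which $a > 100$ yields $bc < 4 N^{4/9}$. The less favourable combinations (where the successive missing components mix $a$'s and $b$'s) must be handled by a more delicate bookkeeping of which component is missing at each stage, combined with the observation that each iteration contributes a multiplicative factor of at least $\min(a,b)/2 > 50$ to the left-hand side, and with the structural fact that the sum of $v_a(\sigma(p_i^{a_i}))$ over $i \neq a$ is exactly $v_a(N) = 2$, which restricts how many times $p_j = a$ can arise before forcing either a large contributing component (with $p_\ell^{a_\ell} > a^2/2$) or two distinct small ones (each exceeding $a/2$), both of which tighten the bound sufficiently.
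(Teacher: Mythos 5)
Your outer case analysis is fine, and in fact your handling of $c^4\mid N$ and of $v_c(N)=2$ is correct (if looser than the paper's, which gets $(2N)^{1/3}$ there without needing Iannucci's bound). The genuine gap is exactly where you locate "the main obstacle": $a^2\|N$, $b^4\|N$, $c\|N$. Your proposed iterated $m$-type argument does not close this case. First, the iteration can stall: after adjoining an external component $m_1$ with $m_1>b/2$, the next missing component of $a^2b^4cm_1$ may be $m_1$ itself, whose prime can be as small as $3$, so the second step contributes essentially nothing; your claim that each iteration contributes a factor of at least $\min(a,b)/2$ is false. Second, and more fundamentally, the branch where the missing components are $a$'s cannot be rescued by the bookkeeping you describe: even in the best outcome there, where external components supply the entire $a^2$, you only reach $a^4b^4c<4N$, and combined with $c\le(3N)^{1/3}$ this gives $bc\le 12^{1/4}N^{1/2}/a$. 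Since Iannucci's bound only gives $a>100$ (a constant), this falls short of $4N^{4/9}$ by a growing factor of order $N^{1/18}$. Indeed the data you allow yourself in this case --- $a^2b^4c\le N$, $c\le(3N)^{1/3}$, $b\le(2N)^{1/5}$, $a>100$ --- is consistent with $bc>4N^{4/9}$ (take $a$ of size $N^{\epsilon}$, $b$ near $N^{1/6-\epsilon}$, $c$ near $N^{1/3}$), so no linear-programming combination of those inequalities can finish the proof; you need new divisibility input.

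The paper's proof supplies that input by ignoring $a$ entirely and analyzing the pair $(b,c)$ directly: if $b\nmid\sigma(c)$ then $b^4c\sigma(c)\mid 2N$ gives $b^4c^2<2N$; if $c\nmid\sigma(b^4)$ then $b^4\sigma(b^4)c\mid 2N$ gives $b^8c<2N$; and in the remaining case $b\mid c+1$ and $c\mid\sigma(b^4)$, Lemma \ref{sigma 41 pair lemma 2} (the $\sigma_{4,1}$ result) forces $b^2\nmid c+1$, whence $b^3c\sigma(c)\mid 2N$ and $b^3c^2<2N$. Each of these, optimized against $c\le(3N)^{1/3}$ by setting $c=N^{\alpha}$, yields $bc<4N^{4/9}$. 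Your proposal never examines $\sigma(c)=c+1$ or $\sigma(b^4)$ in this case, and that omission is the missing idea. (A smaller issue: in the $v_c(N)=1$ branch you silently restrict to $v_a(N)=2$ without disposing of $v_a(N)\ge 4$, which suffers from the same $N^{1/2}$-versus-$N^{4/9}$ problem noted above.)
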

\begin{proof} First note that if $(b^4c^4)|N$ then $bc < N^{\frac{1}{4}}$ so we only need to handle two cases, $b^4|N$ and $c^4|N$. We may assume that not both are true. We will first consider the case when $c^4|N$. We have two subcases: $b||N$
and $b^2||N$. If $b||N$, then we have that $c \not|\sigma(b)$ and thus 
$$b^3 c^3 < (b\sigma(b)c^4)|(2N).$$ This yields that $bc < (2N)^{\frac{1}{3}}$.
If $b^2||N$, then we have that $$b^3c^3< (b^2c^4)|N$$ and the same inequality results.

We then have  two remaining cases. In the first case, Case I, $b^4|N$, $c||N$. In the second case, Case II, we have $b^4|N$ and $c^2||N$.

We'll handle Case I first. We have either $b^4||N$ or we have $b^6|N$ (we cannot have $b^5||N$ since $c$ is the special prime in this case). If $b^6|N$, then we may set $c=N^\alpha$ for some $\alpha$. Thus we have $$b^6 \leq N^{1-\alpha}$$ and hence $$b \leq N^{\frac{1}{6}- \frac{\alpha}{6}}.$$ We then have
$$bc \leq N^{\frac{1}{6}- \frac{\alpha}{6}}N^{\alpha} = N^{\frac{1}{6}- \frac{5\alpha}{6}}. $$ This last quantity on the right is maximized when $\alpha$ is as large as possible, namely when $N^\alpha = (3N)^{1/3}.$ This yields
with a little work $bc \leq 2N^\frac{4}{9}$. 
Now, consider the scenario of $b^4||N$ and $c||N$.  If $b \not| c+1,$ then we have that
$$b^4c^2 < (b^4c \sigma(c+1)) | (2N).$$ And one gets from the above inequality that 
$$bc \leq 2N^{\frac{5}{12}} < 2N^{\frac{4}{9}}.$$
We may thus assume that $b|c+1$.  We may handle the case when $c \not |\sigma(b^4)$ similarly. We thus have that $b|\sigma(c)$ and $c|\sigma(b^4)$.

We then have by Lemma \ref{sigma 41 pair lemma 2} that $b^2 \not|\sigma(c)$. We then have  that
$$b^3c^2 \leq (b^3c\sigma(c))|2N.$$ Then by similar logic, by setting $c=N^\alpha$ and using this to maximize $bc$ we obtain that $bc < 4N^{\frac{4}{9}}.$

We now consider Case II, where $b^4|N$ and $c^2||N$. This case is enough to get from $b^4c^2 < N$ the desired inequality through the same method as before.

\end{proof}

We now come to the pesky case that is the primary barrier to improvement,  namely $b^2||N$ and $c||N$.

Let's discuss what results we do have in this case.
Using the same techniques as before we easily get the following result.  \begin{proposition} If $c \not|\sigma(b^2)$ and $b \not |\sigma(c)$, then we have that $$bc < 4N^{\frac{5}{12}}.$$
\end{proposition}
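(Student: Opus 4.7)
The plan is to combine the two divisibility hypotheses with the fact that, since $b^2\| N$ and $c\| N$, both $\sigma(b^2)$ and $\sigma(c)$ must divide $\sigma(N)=2N$. The hypotheses are exactly what is needed to guarantee that $\sigma(b^2)\sigma(c)$ is coprime to $bc$, which then feeds a strong divisibility chain inside $N$. After that, a one-variable optimization against the Acquaah--Konyagin bound on $c$ finishes the job.

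First I would verify the gcd claim. The prime $b$ cannot divide $\sigma(b^2)=b^2+b+1$ (as $b\mid b^2+b$) and by hypothesis $b\nmid\sigma(c)$. The prime $c$ cannot divide $\sigma(c)=c+1$, and by hypothesis $c\nmid\sigma(b^2)$. Thus $\gcd(bc,\sigma(b^2)\sigma(c))=1$. Combined with $b^2c\mid N$ and $\sigma(b^2)\sigma(c)\mid 2N$, and using that $b^2c$ is coprime to $\sigma(b^2)\sigma(c)$, we obtain
$$b^2c\,\sigma(b^2)\,\sigma(c)\;\Big|\;2N.$$
The crude bounds $\sigma(b^2)>b^2$ and $\sigma(c)>c$ then give
$$b^4c^2<2N.$$

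Next I would optimize $bc$ subject to this inequality together with the Acquaah--Konyagin bound $c<(3N)^{1/3}$ from Inequality \ref{AK inequality}. From $b^4c^2<2N$ we get $b<(2N)^{1/4}c^{-1/2}$, hence
$$bc<(2N)^{1/4}c^{1/2}.$$
The right hand side is increasing in $c$, so substituting the largest permissible value of $c$ yields
$$bc<(2N)^{1/4}(3N)^{1/6}=2^{1/4}3^{1/6}N^{5/12}.$$
Since $2^{1/4}3^{1/6}\approx 1.43<4$, the desired inequality $bc<4N^{5/12}$ follows.

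I do not anticipate any real obstacle: the gcd verification is immediate from the hypotheses together with the trivial facts $b\nmid b^2+b+1$ and $c\nmid c+1$, and the rest is an elementary optimization. In fact the argument actually produces a substantially sharper constant than $4$; the looser stated form is presumably chosen to match the style of the other bounds in this section.
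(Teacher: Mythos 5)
Your proof is correct and follows essentially the same route the paper intends when it says ``using the same techniques as before'': the two non-divisibility hypotheses give $\gcd(bc,\sigma(b^2)\sigma(c))=1$, hence $b^4c^2<2N$, and then optimizing against the Acquaah--Konyagin bound $c<(3N)^{1/3}$ yields the exponent $\frac{5}{12}$. Your constant $2^{1/4}3^{1/6}$ is indeed sharper than the stated $4$, consistent with the analogous earlier proposition in the paper.
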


Summarizing the above we have the following theorem. 

\begin{theorem} We have either $$bc < 4N^\frac{4}{9}$$ or we must have:
\begin{enumerate}
    \item Both $b^2||N$ and $c||N$
    \item Either $c|\sigma(b^2)$ or $b|\sigma(c)$.
\end{enumerate}

\end{theorem}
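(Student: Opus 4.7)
The plan is to derive this theorem as a direct summary of the four propositions proved earlier in this section, via a case analysis on the exponents to which $b$ and $c$ divide $N$. The only input beyond those propositions is Euler's structural theorem, which forces the special prime to have exponent $\equiv 1 \pmod 4$ and every other prime to appear with an even exponent, together with the fact that $N$ has exactly one special prime.

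First I would dispose of the high-exponent case: if $b^4|N$ or $c^4|N$, the corresponding proposition above yields $bc \leq 4N^{4/9}$ directly. So assume neither $b^4|N$ nor $c^4|N$; by Euler each of $b$ and $c$ appears with exponent $1$ or $2$ in $N$ (exponent $3$ is impossible in both the special and non-special regimes). Moreover the configuration $b||N$ and $c||N$ cannot occur since $N$ has only one special prime, leaving the three possibilities $(b^2||N, c^2||N)$, $(b||N, c^2||N)$, and $(b^2||N, c||N)$.

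Next I would dispose of the first two of these using the opening two propositions of the section. If $b^2||N$ and $c^2||N$, the first proposition gives $bc \leq 2\cdot 3^{1/3} N^{5/12}$, which is less than $4N^{4/9}$ since both $2\cdot 3^{1/3} < 4$ and $5/12 < 4/9$. If $b||N$ and $c^2||N$, the second proposition gives $bc \leq (2N)^{2/5}$, likewise less than $4N^{4/9}$ since $2/5 < 4/9$. Both comparisons hold for $N$ above a trivially small threshold, and so in particular for every odd perfect number.

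Finally, the remaining configuration $b^2||N$, $c||N$ is exactly condition (1) of the alternative. If moreover both $c \not|\sigma(b^2)$ and $b \not|\sigma(c)$ hold, the last proposition above gives $bc < 4N^{5/12} < 4N^{4/9}$, so if we are not in the first case of the theorem we must have at least one of $c|\sigma(b^2)$ or $b|\sigma(c)$, which is condition (2). This completes the case analysis. There is no substantive obstacle here: the theorem is essentially a bookkeeping result, and the only care needed is to confirm that the constants in the sharper bounds $2\cdot 3^{1/3} N^{5/12}$ and $(2N)^{2/5}$ genuinely lie below $4N^{4/9}$ in the range of $N$ in play, which is immediate from the exponent comparisons together with the well-known lower bounds on an odd perfect number.
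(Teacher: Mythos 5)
Your proposal is correct and is essentially the paper's own argument: the theorem is stated there as a direct summary ("Summarizing the above we have") of the four preceding propositions, and your case analysis via Euler's theorem together with the constant/exponent comparisons $2\cdot 3^{1/3}N^{5/12} < 4N^{4/9}$, $(2N)^{2/5} < 4N^{4/9}$, and $4N^{5/12} < 4N^{4/9}$ is exactly the bookkeeping the paper leaves implicit.
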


We now consider the situations where we have either $b|\sigma(c)$ or $c|\sigma(b^2)$. Note that we cannot have both by Lemma \ref{p|q+1 and q|p^2+p+1 lemma}.  In this context we can prove that we are in a highly restricted situation.

\begin{proposition} Assume that $b^2||N$ and that $c||N$. If $b \not|\sigma(c)$, and $c|\sigma(b^2)$, then there exists an $m$ such that \begin{enumerate}
    \item $m|N$,
    \item $m$ has at most two distinct prime factors,
    \item $(N/m)=1$,  
    \item $(bc,m)=1$
    \item $b^2 |\sigma(m)$.
    \item $m \not|\sigma(c)\sigma(b^2)$

\end{enumerate}
\end{proposition}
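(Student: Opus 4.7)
The plan is to apply the familiar $m$-type argument to the proper divisor $M = b^2 c$ of $N$, then to extract the required $m$ from those components of $N$ coprime to $bc$ that must supply the factor $b^2$ to $\sigma(N)$. Conditions (1)--(5) will be immediate from the construction; the content of the lemma lies in (6), which I will verify by contradiction.

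First I would observe that $M = b^2 c$ is a proper divisor of $N$ (since $N$ has far more than two distinct prime factors), so $M$ is deficient. Compute $\sigma(M) = (b^2+b+1)(c+1)$. Since $\gcd(b,b^2+b+1)=1$ and, by hypothesis, $b \not|\, c+1$, we have $\gcd(b, \sigma(M)) = 1$, so in particular $b^2 \not|\, \sigma(M)$. Because $N$ is perfect and $M$ is a unitary divisor of $N$, $b^2 \mid \sigma(N) = \sigma(M)\sigma(N/M)$, and therefore $b^2 \mid \sigma(N/M) = \prod_{p \notin \{b,c\}} \sigma(p^{a_p})$. Since $b$ is prime, either some single component $p^s||N$ with $p \notin \{b,c\}$ satisfies $b^2 \mid \sigma(p^s)$---in which case set $m = p^s$---or two distinct such components $p^s||N$ and $q^t||N$ each contribute one factor of $b$ to the product, in which case set $m = p^s q^t$. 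In either case $m$ is a product of full prime-power components of $N$, which immediately gives $m \mid N$, $(m, N/m) = 1$ and $(bc, m) = 1$; moreover $m$ has at most two distinct prime factors and $b^2 \mid \sigma(m)$ by construction, establishing (1)--(5).

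For (6), suppose for contradiction that $m \mid \sigma(c)\sigma(b^2)$. Since $b^2$, $c$ and $m$ are pairwise coprime, $\sigma(b^2 c m) = \sigma(b^2)\sigma(c)\sigma(m)$. Checking prime by prime in $b^2 c m$: the $b$-part is supplied by $b^2 \mid \sigma(m)$, the $c$-part by the hypothesis $c \mid \sigma(b^2)$, and the $m$-part by the assumed divisibility $m \mid \sigma(b^2)\sigma(c)$. Hence $b^2 c m \mid \sigma(b^2 c m)$, so $b^2 c m$ is perfect or abundant. But $b^2 c m$ has at most four distinct prime factors while $N$ has at least five (indeed at least ten by Nielsen), so $b^2 c m$ is a proper divisor of $N$ and must therefore be deficient---a contradiction. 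The only step requiring more than routine bookkeeping is this final contradiction, and it follows the same pattern as the other $m$-type arguments used throughout the paper.
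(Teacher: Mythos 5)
Your proposal is correct and follows essentially the same route as the paper: extract $m$ from the components of $N$ coprime to $bc$ that must supply $b^2$ to $\sigma(N)$ (using $b\nmid\sigma(b^2)$ and the hypothesis $b\nmid\sigma(c)$), and then derive item 6 by showing that $m\mid\sigma(b^2)\sigma(c)$ would make $b^2cm$ a perfect or abundant proper divisor of $N$. Your write-up is in fact slightly more careful than the paper's, which simply posits a minimal such $m_0$ without spelling out why one exists.
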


\begin{proof} Let $m_0$ be the minimum $m_0$ such that $m_0|N$, $(N/m_0)=1$, $(bc,m_0)=1$, and $b^2|\sigma(m_0)$. Note that $m_0$ must have at most two distinct prime factors since there can be at most two components of $N$ which contribute a $b$ to $\sigma(N)$.  So what remains is to prove Item 6. Assume that $m_0|\sigma(c)\sigma(b^2)$. Then
$$\sigma(m_0b^2c) = \sigma(m)\sigma(b^2)\sigma(c)\geq 2mb^2c.$$ Thus, $mb^2c$ is either abundant or perfect. But $mb^2c$ has at most four distinct prime factors, so we cannot have $mb^2c = N$. Thus $N$ has a perfect or abundant divisor and must itself then be abundant and hence not perfect.
\end{proof}

\begin{proposition} Let $N$ be an odd perfect number with $b^2||N$ and $c||N$, $b\not|\sigma(c)$, and let $m$ be as in the above proposition. Then either $bc < 4N^{\frac{5}{12}}$ or $(m,\sigma(c))>1$ 

\end{proposition}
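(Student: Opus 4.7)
The plan is to prove the contrapositive: assume $(m,\sigma(c))=1$, and derive $bc<4N^{5/12}$ (in fact the sharper $bc<2N^{3/8}$).

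First, because $c||N$ is raised to the odd power $1$, Euler's theorem forces $c$ to be the special prime of $N$; hence $c\equiv 1\pmod 4$, and $r:=(c+1)/2$ is an odd positive integer. Since $\sigma(c)=c+1$ divides $\sigma(N)=2N$, this yields $r|N$.

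Next I would verify that $r,m,b^2,c$ are pairwise coprime divisors of $N$. The pairs $(m,b^2)$, $(m,c)$ and $(b^2,c)$ are immediate from the defining properties of $m$ and from $b\ne c$. For the pairs involving $r$: $c\nmid c+1$ gives $(r,c)=1$; the hypothesis $b\nmid\sigma(c)$ gives $(r,b)=1$; and the standing assumption $(m,\sigma(c))=1$ rules out any common prime of $m$ and $r$, because every prime of $r$ divides $\sigma(c)$. Therefore $rmb^2c|N$. Using $m>b^2/2$ (from item 5 of the preceding proposition combined with the deficiency $\sigma(m)<2m$) together with $r>c/2$, one gets
$$N\ge rmb^2c>\frac{c}{2}\cdot\frac{b^2}{2}\cdot b^2\cdot c=\frac{b^4c^2}{4},$$
so $b^4c^2<4N$.

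Combining this with $c|\sigma(b^2)=b^2+b+1<2b^2$ (available because $m$ comes from the preceding proposition, whose hypotheses include $c|\sigma(b^2)$), a short optimization with both constraints tight at $c=2b^2$ and $b^8=N$ yields $bc<2N^{3/8}$; since $3/8<5/12$ and $2<4$ this immediately implies the claimed $bc<4N^{5/12}$.

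The only real content is in the second paragraph: under $(m,\sigma(c))=1$, the odd part $r$ of $c+1$ supplies an additional divisor of $N$ coprime to $m$, $b^2$ and $c$, which upgrades the earlier inequality $mb^2c\le N$ to the sharper $rmb^2c\le N$, and this is exactly what forces the bound. Everything else is routine bookkeeping and a two-variable linear-programming optimization.
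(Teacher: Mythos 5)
Your argument is correct and matches the paper's: both derive $b^4c^2<4N$ from the fact that, under $(m,\sigma(c))=1$, the product $mb^2c\sigma(c)$ divides $2N$ (your odd part $r=(c+1)/2$ is just the bookkeeping for the factor of $2$), combined with $m>\tfrac12 b^2$ from item 5 and $\sigma(c)>c$. The only difference is the endgame: the paper turns $b^4c^2<4N$ into $bc<4N^{5/12}$ via the bound $c<(3N)^{1/3}$, whereas you use $c<2b^2$ from $c\mid\sigma(b^2)$, which in fact yields the slightly sharper $bc<2N^{3/8}$.
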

\begin{proof} Assume that $(m,\sigma(c))=1$. Then we have that $$\frac{1}{2}b^4c^2 < mb^2c\sigma(c)||2N.$$
One thus has $$b^4c^2 < 4N,$$ from which the bound follows. 

\end{proof}

We would like to get same but with $(m,\sigma(b^2))=1$.
If we assume that $(m,\sigma(b^2))=1$ then we have that $$\frac{1}{2}b^2b^2\sigma(b^2)< mb^2\sigma(b^2) |N$$ and this only gives $b< N^{1/6}$ which is not strong enough to improve these results further without some sort of tighter bound on $c$.

\section{Further results on $\sigma_{a,b}$ pairs}

This section contains additional results concerning $\sigma_{a,b}$ pairs. These results are not directly relevant to odd perfect numbers but are independently interesting.

\begin{lemma} Suppose $p$ and $q$ are positive integers with $p|q+1$, and $q|p+1$. Then one must have $(p,q) \in \{(1,1), (1,2), (2,1), (2,3), (3,2)\}$
\label{p|q+1 and q|p+1 lemma}
\end{lemma}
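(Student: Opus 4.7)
The approach is to reduce to a size comparison using the divisibility conditions, which forces $q$ to be very close to $p$, and then to enumerate a small number of cases. First I would note that the statement is symmetric in $p$ and $q$, so I would assume without loss of generality that $p \leq q$, and aim to recover the list $\{(1,1), (1,2), (2,3)\}$ on this side, with the remaining two pairs obtained by swapping coordinates.

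The key observation is that since $q$ is a positive integer and $q \mid p+1$, we must have $q \leq p+1$. Combined with $p \leq q$, this forces $q \in \{p, p+1\}$. I would then split into these two cases. In the case $q = p$, the divisibility $p \mid q+1 = p+1$ forces $p \mid 1$, so $p = q = 1$, giving the pair $(1,1)$. In the case $q = p+1$, the divisibility $p \mid q+1 = p+2$ forces $p \mid 2$, so $p \in \{1,2\}$, giving the pairs $(1,2)$ and $(2,3)$.

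Combining with the symmetry, this yields exactly the five listed solutions. The argument is entirely elementary: there is no real obstacle, and the only tiny thing to check is that the divisibility conditions make sense for positive integers (i.e., we are using that $q \mid n$ with $n, q \geq 1$ implies $q \leq n$). No lemmas from the rest of the paper are needed.
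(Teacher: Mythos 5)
Your proof is correct. It does take a different route from the paper's: the paper does not exploit the symmetry of the statement, but instead writes $p+1 = kq$, derives $k \leq 1 + \tfrac{2}{q} \leq 3$, and then runs a sub-case analysis on $k \in \{1,2,3\}$ (introducing a second multiplier $m$ inside the cases $k=2$ and $k=3$). You instead assume $p \leq q$ without loss of generality, note that $q \mid p+1$ forces $q \leq p+1$ and hence $q \in \{p, p+1\}$, and finish each case with a one-line divisibility argument ($p \mid 1$ or $p \mid 2$). Both arguments are elementary enumerations, but yours is shorter and avoids the nested multiplier bookkeeping; the small price is that you must remember to symmetrize the solution set at the end, which you do. Everything checks out: $q \mid p+1$ with $p+1 \geq 2$ does give $q \leq p+1$, the case $q=p$ yields only $(1,1)$, and the case $q=p+1$ yields $(1,2)$ and $(2,3)$, so the full list is exactly the five pairs claimed.
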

\begin{proof}  Assume that $q|p+1$ and $p|q+1$.  We have $kq= p+1$ for some $k$, and so $p=kq-1$. We then have  that $kq-1 | q+1$, and hence $kq - 1 \leq q+1$. Solving for $k$, we obtain that $$k \leq 1+ \frac{2}{q}.$$ The last inequality implies $k \leq 3$. We will consider three cases $k=1$, $k=2$ or $k=3$.

If $k=1$, then we have $$q-1|q+1,$$ and hence $q-1|2q$. Since $(q-1,q)=1$, this forces $q-1|2$, and  therefore either $q=2$ or $q=3$. These correspond to $p=1$ or to $p=2$, leading to the pairs $(p,q)=(1,2)$, and $(p,q) = (2,3)$

If $k=2$, then $2q-1|q+1$. This implies that there is some $m$ such that $m(2q-1)=q+1$. Notice that if $m \geq 3$ this leads to a contradiction, so we must have $m=1$ or $m=2$. If $m=1$, we have $2q-1=q+1$, and so $q=2$, and thus $p=3$ Thus, the only solution for $m=1$ is $(p,q)=(3,2).$ 

If $m=2$, then we have $2(2q-1)= q+1$ which yields $q=1$ and $p=1$ and thus the solution $(p,q) = (1,1)$.

Finally, we have the possibility that $k=3$, which yields $3q-1|q+1$. We then have  $$m(3q-1) = q+1 $$ for some $m$. If $m \geq 2$ we get a contradiction. Thus we may assume that $m=1$. This gives us $3q-1 = q+1$ which yields $q=1$, and $p=2$, which gives our final point $(p,q) = (2,1).$ 

\end{proof}

From Lemma \ref{p|q+1 and q|p+1 lemma}  we may classify all $\sigma_{1,1}$ pairs. 

\begin{proposition} The only $\sigma_{1,1}$ pairs are $(2,3)$ and $(3,2)$.
\end{proposition}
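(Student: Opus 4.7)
The plan is to invoke Lemma \ref{p|q+1 and q|p+1 lemma} directly and then impose the additional primality hypothesis that distinguishes a $\sigma_{1,1}$ pair from an arbitrary quasisolution to the divisibility relations $p \mid q+1$ and $q \mid p+1$. Recall that a $\sigma_{1,1}$ pair is a pair of primes $p,q$ with $q \mid \sigma(p) = p+1$ and $p \mid \sigma(q) = q+1$, so any such pair is in particular a solution to the divisibility system treated by the lemma.

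First I would note that the hypotheses of the lemma are satisfied with no additional work, since $\sigma(p) = p+1$ and $\sigma(q) = q+1$ for primes $p$ and $q$. The lemma then supplies the complete list of positive integer solutions:
\[
(p,q) \in \{(1,1),\,(1,2),\,(2,1),\,(2,3),\,(3,2)\}.
\]
Next I would simply strike out those ordered pairs containing a $1$, since $1$ is not prime and cannot appear in a $\sigma_{1,1}$ pair. This eliminates $(1,1)$, $(1,2)$, and $(2,1)$, leaving only $(2,3)$ and $(3,2)$. Finally, I would verify in one line that both $2$ and $3$ are prime and that $2 \mid 3+1$ and $3 \mid 2+1$, so these two ordered pairs do genuinely satisfy the definition.

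There is no real obstacle here; the entire content is packaged in the preceding lemma, and the proposition is essentially a corollary obtained by restricting its conclusion to primes. The only thing worth stating explicitly in the write-up is that we treat ordered pairs, so that $(2,3)$ and $(3,2)$ are counted as distinct $\sigma_{1,1}$ pairs, consistent with the definition given earlier in the paper.
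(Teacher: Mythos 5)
Your proof is correct and matches the paper's approach exactly: the paper derives this proposition as an immediate consequence of Lemma \ref{p|q+1 and q|p+1 lemma}, discarding the solutions containing $1$ since $1$ is not prime. Your write-up simply makes explicit the steps the paper leaves implicit.
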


We will now use this result to better understand $\sigma_{2,3}$ and $\sigma_{3,3}$ pairs.

\begin{lemma} Assume that $(p,q)$ is a $\sigma_{3,3}$ pair. Then we must be in one of four circumstances:
\label{sigma 3 3 breakdown into 4 cases}
\begin{enumerate}
    \item $(p,q)$ is a $\sigma_{1,1}$ pair.
    \item We have $p|(q^2+1)$ and $q|(p^2+1)$. 
    \item We have $p|(q+1)$ and $q|(p^2+1)$.
    \item We have $p|(q^2+1)$ and $p|(q+1)$. 
\end{enumerate}

\end{lemma}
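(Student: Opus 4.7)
The plan is to exploit the factorization
\[
\sigma(p^3) \;=\; 1 + p + p^2 + p^3 \;=\; (p+1)(p^2+1),
\]
and its mirror $\sigma(q^3) = (q+1)(q^2+1)$, to split the defining divisibility conditions of a $\sigma_{3,3}$ pair. By definition, $q \mid \sigma(p^3) = (p+1)(p^2+1)$, and since $q$ is prime it must divide one of the two factors: either $q \mid p+1$ or $q \mid p^2+1$. Symmetrically, the condition $p \mid \sigma(q^3) = (q+1)(q^2+1)$ forces $p \mid q+1$ or $p \mid q^2+1$.

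These two binary dichotomies yield exactly four combinations, which are precisely the four cases in the statement. Namely, the combination $q \mid p+1$ and $p \mid q+1$ is, by definition, a $\sigma_{1,1}$ pair, giving Case 1; the combination $q \mid p^2+1$ and $p \mid q^2+1$ is Case 2; and the two mixed combinations $q \mid p^2+1$ with $p \mid q+1$, and $q \mid p+1$ with $p \mid q^2+1$, give Cases 3 and 4 respectively.

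There is essentially no obstacle here: the whole proof is a clean case-split driven purely by the algebraic factorization of $\sigma(p^3)$ together with the primality hypothesis on $p$ and $q$. The content of the lemma lies not in the argument itself but in the reduction it achieves, since it allows each of the four resulting divisibility systems to be attacked separately by arguments in the style of Lemma~\ref{p|q+1 and q|p+1 lemma}, and in particular lets one feed Case~1 directly into the already proven classification of $\sigma_{1,1}$ pairs.
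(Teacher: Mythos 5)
Your proof is correct and is essentially identical to the paper's: both rest on the factorization $\sigma(x^3)=(x+1)(x^2+1)$ and the primality of $p$ and $q$ to split each divisibility condition into two alternatives, yielding the four cases. (You also implicitly correct the statement's Case 4, which should read $q\mid p+1$ rather than $p\mid q+1$; your reading is the intended one.)
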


\begin{proof} Assume that $(p,q)$ is a $\sigma_{3,3}$ pair. We must then have $p|q^3+q^2+q+1$ and $q|p^3+p^2+p+1$. Note that we have the factorization
$$x^3 + x^2 + x +1 = (x+1)(x^2+1).$$ Since $p$ and $q$ are primes, and we have $p|(q+1)(q^2+1)$, and $q|(p+1)(p^2+1)$ the result follows. 
\end{proof}

Note that Cases 3 and 4 of Lemma \ref{sigma 3 3 breakdown into 4 cases} are symmetric, so to understand the remaining $\sigma_{3,3}$ pairs we need only concentrate on Cases 2 and 3. We will classify explicitly all solutions for Case 3, and will obtain a restriction on Case 2 very similar to the what we did with $\sigma_{2,2}$ pairs. 

Define the sequence $s_n$ as follows: $s_0=s_1=1$, and for all $n \geq 0$ we set $$s_{n+2} = \frac{s_{n+1}^2 +1}{s_n}.$$

\begin{lemma} Suppose that $x$ and $y$ are positive integers such that $x|y^2+1$ and $y|x^2+1$. Then $(x,y)$ is a pair of consecutive terms in the sequence $s_n$.
\label{p|q^2 +1 and q|p^2+1 classification}
\end{lemma}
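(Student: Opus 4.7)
The plan is to mimic the Vieta-jumping descent from Lemma \ref{quasisolutions only come from quasisolution equation}. First I would observe that $\gcd(x,y)=1$: any common divisor $d$ of $x$ and $y$ divides $y^2+1$ and also $y^2$, hence $d\mid 1$. Since $x\mid y^2+1$ implies $x\mid x^2+y^2+1$ and symmetrically for $y$, coprimality gives $xy\mid x^2+y^2+1$. So the quantity $k=(x^2+y^2+1)/(xy)$ is a well-defined positive integer attached to every solution; evaluating at $(1,1)$ gives $k=3$, and the goal of the descent will be to show that $k=3$ for every solution and, more importantly, that every solution descends to $(1,1)$.

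Next I would carry out the Vieta flip. With $y$ and $k$ fixed, view $X^2-kyX+(y^2+1)=0$ as a quadratic in $X$ having $x$ as a root; the other root is $x'=ky-x=(y^2+1)/x$, which is a positive integer since $x\mid y^2+1$. The pair $(x',y)$ preserves the invariant $k$, and the relation $y\mid (x')^2+1$ follows from $x^2\equiv -1\pmod y$, giving $(x')^2\equiv x^{-2}\equiv -1\pmod y$. Flipping instead in the second coordinate gives $y'=(x^2+1)/y$, and the same computation shows $(x,y')$ is again a solution. For descent, assume WLOG $x\le y$; if $x<y$ then $y^2\ge (x+1)^2>x^2+1$, so $y'=(x^2+1)/y<y$, and a short check shows $y'\le x$, with equality only in the degenerate case $(x,y)=(1,2)$ where $y'=1$. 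Iterating this move strictly decreases the larger coordinate, so the process must terminate; it can terminate only at a pair with $x=y$, and then $x\mid x^2+1$ forces $x=y=1$.

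Running the descent in reverse, starting from $(s_0,s_1)=(1,1)$ and repeatedly applying the ascending flip $(x,y)\mapsto (y,(y^2+1)/x)$, is exactly the defining recurrence of $s_n$, and by the descent argument every solution $(x,y)$ with $x\le y$ must appear along this chain as a consecutive pair $(s_n,s_{n+1})$; the case $x>y$ is handled by symmetry. The main thing to watch is the edge behavior near $(1,1)$ and $(1,2)$ (where the descent inequality is tight), together with the single modular check $(x')^2\equiv -1\pmod y$ verifying that the Vieta flip actually lands back in the solution set rather than merely preserving the algebraic invariant $k$; beyond this, the argument is a direct transcription of the quasisolution descent.
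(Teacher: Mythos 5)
Your proof is correct and follows essentially the same route as the paper: the paper also sets $z=(x^2+1)/y$, verifies that the new pair still satisfies both divisibility relations, and descends to $(1,1)$, exactly as in your Vieta flip (the invariant $k$ you introduce is harmless but never actually needed). Your write-up is in fact a bit more careful than the paper's at the two delicate points — the modular verification that the flipped pair is again a solution, and the strict decrease of the larger coordinate with the $(1,2)$ edge case isolated.
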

\begin{proof} It is immediate that the sequence of $s_n$ consist of integers and are solutions to the equation in question. We need to show that every solution arises from this sequence.

Our proof is very similar to what we did to classify quasichain solutions for $\sigma_{2,2}$ pairs. Note that any pair $x,y$ satisfying $x|y^2+1$ and $y|x^2+1$ must either have $y \neq x$, or must be the pair $(x,y)=(1,1)$.  Set  $z=(x^2+1)/y$. We claim that $z$ and $x$ satisfy the pair of relationships $z|x^2+1$ and $x|z^2+1$. The definition of $z$ immediately implies $z|x^2+1$. The second relationship requires some slight work. We have
$$z^2 +1 =\left(\frac{x^2+1}{y}\right)^2+1 = \frac{x^2 + 4x +(y^2+1)}{y^2}.$$
Note that $x|(x^2+2x)$ and  $x|(y^2+1)$ so we have that $x|x^2 + 2x +(y^2+1)$. Since $(x,y)=1$, we then havethat $$x|\left(\frac{x^2 + 2x +(y^2+1)}{y^2}\right)$$ which is the claimed relationship. Thus, if $x \neq y$, we can construct a smaller pair, $z$, and $y$ which satisfy the same relationship. Thus, all solutions must arise from the initial pair (1,1).
\end{proof}

Note that an easy  induction argument shows that for $n>1$, $s_n = F_{2n-1}$ where $F_n$ is the $n$th Fibonacci number. We strongly suspect that there are only finitely many $n$ such that both $F_{2n-1}$ and $F_{2n+1}$ are prime. Note that since $F_p$ can only be prime when $p$ is prime, the existence of infinitely many pairs of primes $F_{2n-1}$ and $F_{2n+1}$ would correspond to a much stronger version of the twin prime conjecture. However, a heuristic argument similar to the argument that we expect only finitely many $\sigma_{2,2}$ pairs suggests we only have finitely many of these pairs also. \\

Define the sequence $u_n$ as follows: 
We set $u_0=u_1=1$ and apply the following two rules:
$$u_{2k+2} = \frac{u_{2k+1}^2 +1}{u_{2k}}$$
and $$u_{2k+3} =  \frac{u_{2k+2} +1}{u_{2k+1}} .$$

Notice that this sequence is periodic and takes the form 
$$1,1,2,3,5,2,1,1,2,3,5\cdots  $$

\begin{lemma} If $a$ and $b$ are positive integers
satisfying $b|a^2 +1$ and $a|b+1$ then they must arise from a pair of terms from the $u_n$ sequence.
\end{lemma}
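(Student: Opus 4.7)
The plan is to reduce the divisibility conditions to a symmetric Diophantine equation and then enumerate. Write the conditions as identities: since $a \mid b+1$ and $b \mid a^2+1$, set $c = (b+1)/a$ and $d = (a^2+1)/b$, which are positive integers. Substituting $b = ac - 1$ into $a^2 + 1 = bd$ yields $a^2 + 1 + d = acd$, and reducing modulo $a$ forces $d \equiv -1 \pmod{a}$. Hence I may write $d = ae - 1$ for some positive integer $e$. Plugging this back in, the $a^2$ terms cancel and after dividing by $a$ one obtains the completely symmetric equation
\[ a + c + e = ace. \]

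Next, I would solve this equation over the positive integers. If all three of $a,c,e$ are at least $2$, then with $m = \max(a,c,e)$ one has $ace \geq 4m$ while $a+c+e \leq 3m$, a contradiction. So at least one of the three variables equals $1$, and by the symmetry of the equation, setting that variable to $1$ reduces the remaining equation to $(x-1)(y-1) = 2$, which forces $\{x,y\} = \{2,3\}$. Therefore $\{a,c,e\} = \{1,2,3\}$ as a multiset. Going through the six possible ordered triples and recovering $b = ac - 1$ gives exactly the six candidate pairs
\[ (a,b) \in \{(1,1),\ (1,2),\ (2,1),\ (2,5),\ (3,2),\ (3,5)\}. \]

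Finally, I verify that each of these six pairs consists of adjacent terms from the periodic sequence $\ldots,1,1,2,3,5,2,1,1,2,3,5,2,\ldots$ generated by the $u_n$ recursion. Four of them, namely $(1,1),(1,2),(3,5),(2,1)$, appear in the forward direction as $(u_0,u_1),(u_1,u_2),(u_3,u_4),(u_5,u_6)$, and the remaining two, $(3,2)$ and $(2,5)$, appear as $(u_3,u_2)$ and $(u_5,u_4)$, i.e.\ reading adjacent terms backward. Thus every solution arises from an adjacent pair in the $u_n$ sequence, completing the proof.

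The principal obstacle is simply carrying out the algebraic reduction cleanly: one must verify that $d \geq 1$ and $e \geq 1$ (so that the substitution $d = ae - 1$ really is legitimate and positive) and check that the edge case $a = 1$ fits seamlessly into the same framework rather than requiring separate treatment. Once the symmetric equation $a+c+e = ace$ is in hand, the rest is a routine finite calculation.
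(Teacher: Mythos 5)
Your proof is correct, and it takes a genuinely different route from the paper's. The paper argues by descent: it takes a putative counterexample $(a,b)$ minimizing $a+b$ and shows that replacing the larger coordinate by $(b+1)/a$ or $(a^2+1)/b$ produces a smaller solution, so every solution descends to the base of the $u_n$ cycle. You instead eliminate $b$ and $d$ to arrive at the symmetric equation $a+c+e=ace$, whose only positive solution is the multiset $\{1,2,3\}$, and then read off all six solution pairs $(1,1)$, $(1,2)$, $(2,1)$, $(2,5)$, $(3,2)$, $(3,5)$ explicitly. Your reduction is sound: $e=(d+1)/a\geq 1$ because $d+1$ is a positive multiple of $a$, the case $a=1$ does fall into the same framework, and your list agrees with a direct check (for $a\geq 4$ the congruence $b\equiv -1 \pmod a$ is incompatible with $b\mid a^2+1$). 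What your approach buys is a sharper statement -- a complete finite classification rather than merely ``arises from $u_n$'' -- and it sidesteps the bookkeeping a descent requires (verifying the new pair still satisfies both divisibilities and that the sum strictly decreases), which the paper in fact leaves partly implicit. What the paper's descent buys is uniformity: it is the same mechanism used for the genuinely infinite families in that section (the $s_n$ classification of $x\mid y^2+1$, $y\mid x^2+1$), where a finite enumeration like yours is impossible; here it happens to terminate in a finite cycle because $u_n$ is periodic. One small point worth making explicit if you write this up: consecutive $u_n$ pairs read in the forward direction, such as $(2,3)$ and $(5,2)$, are not themselves solutions of your system, so the matching to the sequence necessarily uses adjacent terms in both orders, as you note.
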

\begin{proof} The method of proof is similar to our earlier reductions. Assume that we have a pair $(a,b)$ satisfying $b|(a^2 +1)$ and $a|(b+1)$ which is not a pair of consecutive terms of $u_n$. We may pick a pair which has smallest possible value of $a+b$. We may assume that this pair satisfies $a>5$, $b>5$ and $a \neq b$. If $a>b$, then the pair $\frac{b+1}{a}, b)$ also satisfies the desired divisibility relations but has a smaller sum, that is $\frac{b+1}{a} + b < a+b$, which is a contradiction. Similarly, if $b<a$, then the pair $(a,\frac{a^2+1}{b})$ satisfies the divisibility relations while $a + \frac{a^2+1}{b} < a+b$ which again is a contradiction. 
\end{proof}

\section{Acknowledgements}
Rajdip Palit pointed out that an earlier version of Lemma \ref{p2+p+1 and q2 + q+ 1 are almost relatively prime when coming from a sigma 2 2 pair} was incorrect.

\end{document}